\newtheorem{thm}{Theorem}[section]
\newtheorem{prop}{Proposition}[section]
\newtheorem{cor}{Corollary}[section]
\newtheorem{lem}{Lemma}[section]
\theoremstyle{definition}
\newtheorem{remark}{Remark}[section]
\newtheorem{claim}{Claim}[section]
\newtheorem{defn}{Definition}[section]
\newtheorem{notation}{Notation}[section]
\newtheorem{example}{Example}[section]
\newtheorem{fact}{Fact}[section]
\newtheorem*{property}{Property}
\newtheorem*{observation}{Observation}
\newtheorem*{assumption_ast}{Assumption $(\ast)$}
\newtheorem*{assumption_sharp}{Assumption $(\sharp)$}
\def \bthm {\begin{thm}}
\def \bthmb {\begin{thm} \mbox{}}
\def \ethm {\end{thm}} 
\def \bprop {\begin{prop}}
\def \bpropb {\begin{prop} \mbox{}}
\def \eprop {\end{prop}} 
\def \blem {\begin{lem}}
\def \blemb {\begin{lem} \mbox{}}
\def \elem {\end{lem}} 
\def \bcor {\begin{cor}}
\def \bcorb {\begin{cor} \mbox{}}
\def \ecor {\end{cor}}
\def \bdefn {\begin{defn}} 
\def \bdefnb {\begin{defn} \mbox{}}
\def \edefn {\end{defn}} 
\def \bfact {\begin{fact}} 
\def \bfactb {\begin{fact} \mbox{}}
\def \efact {\end{fact}} 
\def \bnot {\begin{notation}} 
\def \bnotb {\begin{notation} \mbox{}}
\def \enot {\end{notation}} 
\def \brem {\begin{remark}} 
\def \bremb {\begin{remark} \mbox{}} 
\def \erem {\end{remark}} 
\def \bpty {\begin{property}} 
\def \bptyb {\begin{property} \mbox{}} 
\def \epty {\end{property}} 
\def \bexp {\begin{example}} 
\def \bexpb {\begin{example} \mbox{}} 
\def \eexp {\end{example}} 
\def \bexp {\begin{example}} 
\def \bexpb {\begin{example} \mbox{}} 
\def \eexp {\end{example}} 
\def \bcl {\begin{claim}}
\def \bclb {\begin{claim} \mbox{}} 
\def \ecl {\end{claim}} 
\def \bobs {\begin{observation}}
\def \bobsb {\begin{observation} \mbox{}} 
\def \eobs {\end{observation}} 
\def \btab {\begin{tabular}}
\def \btabb {\begin{tabular} \mbox{}} 
\def \etab {\end{tabular}} 
\def \bary {\begin{array}}
\def \baryb {\begin{array} \mbox{}} 
\def \eary {\end{array}} 
\def \barr {\begin{array}}
\def \barrb {\begin{array} \mbox{}} 
\def \earr {\end{array}} 
\def \bpf {\begin{proof}}
\def \bpfb {\begin{proof} \mbox{}} 
\def \epf {\end{proof}} 
\def \benum {\begin{enumerate}}
\def \eenum {\end{enumerate}} 
\def \bit {\begin{itemize}}
\def \eit {\end{itemize}} 
\def \balign {\begin{align*}}
\def \ealign {\end{align*}}
\def \bCD {\begin{CD}}
\def \eCD {\end{CD}}
\def \edc {

\pagestyle{plain}

\begin{document}
\baselineskip 7 mm


\title[]{Boundedness of bundle diffeomorphism groups over a circle}

\author[Kazuhiko Fukui]{Kazuhiko Fukui} 
\address{228-168  Nakamachi, Iwakura, Sakyo-ku, Kyoto 606-0025, Japan}
\email{fukui@cc.kyoto-su.ac.jp}

\author[Tatsuhiko Yagasaki]{Tatsuhiko Yagasaki}
\address{Faculty of Arts and Sciences, Kyoto Institute of Technology, Kyoto, 606-8585, Japan}
\email{yagasaki@kit.ac.jp}

\subjclass[2020]{Primary 57R50, 57R52;  Secondary 37C05.}  
\keywords{bundle diffeomorphism, equivariant diffeomorphism, fiber bundle, boundedness, uniformly perfect, commutator length}

\maketitle

\begin{abstract}{}
In this paper we study boundedness of bundle diffeomorphism groups over a circle. 
For a fiber bundle $\pi : M \to S^1$ with fiber $N$ and structure group $\Gamma$ and $r \in \IZ_{\geq 0} \cup \{ \infty \}$ 
we distinguish an integer $k = k(\pi, r) \in \IZ_{\geq 0}$ and construct a function $\widehat{\nu} : {{\rm Diff}^r_\pi(M)_0} \to \IR_k$. 
When $k \geq 1$, it is shown that the bundle diffeomorphism group ${{\rm Diff}^r_\pi(M)_0}$ is {bounded} 
and ${clb_\pi d\,{\rm Diff}^r_\pi(M)_0} \leq k+3$, 
if ${{\rm Diff}^r_{\rho, c}(E)_0}$ is perfect for the trivial fiber bundle $\rho : E \to \IR$ with fiber $N$ and structure group $\Gamma$. 
On the other hand, when $k = 0$, it is shown that $\widehat{\nu}$ is a unbounded quasimorphism, so that 
${{\rm Diff}^r_\pi(M)_0}$ is unbounded and not uniformly perfect. 
We also describe the integer $k$ in term of the attaching map $\phi$ for a mapping torus $\pi : M_\phi \to S^1$ and 
give some explicit examples of (un)bounded groups. 
\end{abstract}

\thispagestyle{empty}

\section{Introduction} 

The boundedness of the diffeomorphism group ${\rm Diff}^r(M)$ of a smooth manifold $M$ is studied by 
D.~Burago, S.~Ivanov and L.~Polterovich \cite{BIP} and T.~Tsuboi \cite{Ts2, Ts3, Ts4}, et al. 
The case of equivariant diffeomorphism groups under free Lie group actions is studied by 
K.~Fukui \cite{AF, Fu2}, J.~Lech, I.~Michalik and T.~Rybicki \cite{LMR} et al,  and 
the case of leaf-preserving diffeomorphism groups is studied in \cite{Fu1, Ry, Ts1}. 

In this paper we continue the study of the boundedness of bundle diffeomorphism groups. 
Suppose $\pi : M \to B$ is a $C^\infty$ fiber bundle with fiber $N$ and structure group $\Gamma < {\rm Diff}(N)$.
The group $\Gamma$ determines the corresponding structure on each fiber of $\pi$.  
Let $r \in \IZ_{\geq 0} \cup \{ \infty \}$. 
A $C^r$ bundle diffeomorphism of $\pi$ is a $C^r$ diffeomorphism $f$ of $M$ 
which preserves the family of fibers of $\pi$ together with the $\Gamma$-structure 
(i.e., $\pi f = \underline{f} \pi$ for some $\underline{f} \in {\rm Diff}^r(B)$) and 
the restriction of $f$ to each fiber of $\pi$ lies in $\Gamma$ when it is represented as a diffeomorphism of $N$ using local trivializations of $\pi$). 
(See Subsection 3.1.)
Let ${\rm Diff}^r_\pi(M)$ denote the group of $C^r$ bundle diffeomorphisms of $\pi$ and 
${\rm Diff}^r_\pi(M)_0$ denote the identity component of ${\rm Diff}^r_\pi(M)$
(i.e., ${\rm Diff}^r_\pi(M)_0$ consists of all $f \in {\rm Diff}^r_\pi(M)$ which is $C^r$ isotopic to $\id_M$ in ${\rm Diff}^r_\pi(M)$. 
There exists a natural group homomorphism $P : {\rm Diff}^r_\pi(M)_0 \to {\rm Diff}^r(B)_0$, $P(f) = \underline{f}$. 

Our aim is to study the boundedness of the group ${\rm Diff}^r_\pi(M)_0$. 
Especially we are concerned with $clb_\pi f$, the commutator length of $f \in {\rm Diff}^r_\pi(M)_0$ supported in balls in $B$,  
and $clb_\pi d \, {\rm Diff}^r_\pi(M)_0$, the diameter of ${\rm Diff}^r_\pi(M)_0$ with respect to $clb_\pi$. 
In this paper we study the case where the base space $B$ is a circle $S^1$ and 
show that the boundedness of ${\rm Diff}^r_\pi(M)_0$ is distinguished by an integer $k = k(\pi, r) \geq 0$ as in Theorems 1 and 2 below. 

Take a universal covering $\pi_{S^1} : \IR \to \IR/\IZ \cong S^1$ with $\IZ$ as the covering transformation group. 
Fix a distinguished point $p \in S^1$. 
Let ${\cal P}(S^1)$ denote the space of continuous paths in $S^1$. 
The winding number $\lambda : {\cal P}(S^1) \to \IR$ is defined as follows. 
For each $c \in {\cal P}(S^1)$ take any lift $\widetilde{c} \in {\cal P}(\IR)$ of $c$ (i.e., $\pi_{S^1}\,\widetilde{c} = c$) and 
define $\lambda(c) \in \IR$ by $\lambda(c) := \widetilde{c}(1) - \widetilde{c}(0)$. 
This quantity is independent of the choice of the lift $\widetilde{c}$. 

Suppose $\pi : M \to S^1$ is a fiber bundle with fiber $N$ and structure group $\Gamma$ and let $r \in \IZ_{\geq 0} \cup \{ \infty \}$. 
{Let ${\rm Isot}^r_\pi(M)_0$ denote the space of $C^r$ isotopies $F$ of $M$ with $F_0 = \id_M$ and $F_t \in {\rm Diff}^r_\pi(M)$ $(t \in I)$. 
Here,  $I \equiv [0,1]$. 
Then we have the surjective group homomorphism $R : {\rm Isot}^r_\pi(M)_0 \to {\rm Diff}^r_\pi(M)_0$, $R(F) = F_1$, which has the kernel 
${\rm Isot}^r_\pi(M)_{\id, \id} := \{ F \in {\rm Isot}^r_\pi(M)_0 \mid F_1 = \id_M \}$.} 
Each $F = (F_t)_{t \in I} \in {\rm Isot}^r_\pi(M)_0$ induces 
an isotopy $\underline{F} = (\underline{F_t})_{t \in I} \in {\rm Isot}^r(S^1)_0$ and a path $\underline{F}_p := \underline{F}(p, \ast) \in {\cal P}(S^1)$. 
In Section 4 we show that the function \\ 
\hspp $\nu : {\rm Isot}^r_\pi(M)_0 \to \IR$, $\nu(F) = \lambda(\underline{F}_p)$, \\
is a surjective quasimorphism. 
Furthermore, 
it restricts to a surjective group homomorphism \\
\hspp $\nu : {\rm Isot}^r_\pi(M)_{\id, \id} \to k\IZ$ \\
\ for a unique $k = k(\pi, r) \in \IZ_{\geq 0}$. 
Finally, we obtain a surjective map \\
\hspp $\widehat{\nu} : {\rm Diff}^r_\pi(M)_0 
\to \IR_k \equiv \IR/k\IZ$; 
\ $\widehat{\nu}(F_1) = [\nu(F)]$ \ $(F \in {\rm Isot}^r_\pi(M)_0)$. \\ 
It restricts to a surjective group homomorphism \hsf $\widehat{\nu}|_{{\rm Ker}\,P} : {\rm Ker}\,P \to \IZ_k \equiv \IZ/k\IZ$ 
and the latter induces the group isomorphism $(\widehat{\nu}|_{{\rm Ker}\,P})^\sim : ({\rm Ker}\,P)\big/({\rm Ker}\,P)_0 \cong \IZ_k$. 

\vskip 2mm 

Consider the following assumption on $(N, \Gamma, r)$. 

\begin{assumption_ast} 
Suppose $J$ is an open interval and $\varrho : L \to J$ is a trivial fiber bundle with fiber $N$ and structure group $\Gamma$. 
Then ${\rm Diff}^r_{\varrho,c}(L)_0$ is perfect. 
\end{assumption_ast}

{This assumption is satisfied, for example, in the following cases : (i) $\pi$ is a principal $G$ bundle, $G$ is a compact Lie group and 
$1 \leq r \leq \infty$, $r \neq 2$, or (ii) $\pi$ is a locally trivial bundle, $N$ is a closed manifold and $r = \infty$.  
See Subsection 3.4 for details.}

\bthm\label{prop_k_geq_1} Suppose $k = k(\pi, r) \geq 1$ and $(N, \Gamma, r)$ satisfies Assumption $(\ast)$. 

If $f \in {\rm Diff}^r_\pi(M)_0$ and $\widehat{\nu}(f) = [s] \in \IR_k$ $\big(s \in \big(-\frac{k}{2}, \frac{k}{2}\big]\big)$, 
then $clb_\pi f \leq 2[|s|]+ 3 \leq k+3$. 
\ethm

\bcor\label{cor_S^1_k_geq_1} \ Suppose $k = k(\pi, r) \geq 1$ and $(N, \Gamma, r)$ satisfies Assumption $(\ast)$.
\benum
\item[{\rm (1)}] $clb_\pi d \,{\rm Diff}^r_\pi(M)_0 \leq k + 3$. 
\item[{\rm (2)}] \hspace*{0.7mm}{\rm (i)} \ ${\rm Diff}_\pi^r(M)_0$ is uniformly simple relative to ${\rm Ker}\,P$.  \\
{\rm (ii)} \ ${\rm Diff}_\pi^r(M)_0$ is bounded. 
\eenum 
\ecor 

See {Subsection 2.3} for the definition and basic properties of the notion of uniform simplicity of a group relative to a normal subgroup. 
This property implies the boundedness of the group. 

When $k = 0$, we have $(\IR_k, \IZ_k) = (\IR, \IZ)$ and it is seen that 
the function $\widehat{\nu} : {\rm Diff}^r_\pi(M)_0 \lra \IR$ is a surjective quasimorphism and 
the restriction $\widehat{\nu} : {\rm Ker}\,P  \to \IZ$ is a surjective group homomorphism. 
This implies the following conclusion. 

\bthm\label{prop_k=0} If $k = 0$, then the group ${\rm Diff}^r_\pi(M)_0$ is {unbounded and not uniformly perfect.} 
\ethm

Note that ${\rm Diff}_\pi^r(M)_0/{\rm Ker}\,P \cong {\rm Diff}^r(S^1)_0$ is uniformly simple, but 
${\rm Diff}_\pi^r(M)_0$ is not uniformly simple relative to ${\rm Ker}\,P$, since ${\rm Diff}_\pi^r(M)_0$ is unbounded. 

Any fiber bundle over a circle can be represented as a mapping torus $\pi_\phi : M_\phi \to S^1$ for some $\phi \in \Gamma < {\rm Diff}^\infty(N)$. 
We describe the invariant $k = k(\pi_\phi, r)$ in term of the attaching map $\phi$. 
For example, if $N = T^2 \equiv\IR^2/\IZ^2$ (a torus) and 
$\phi \in {\rm Diff}^\infty(N)$ is induced by a matrix $A \in SL(2, \IZ)$ with $A^n \neq E_2$ $(n \in \IZ - \{ 0 \})$, 
then $k(\pi_{\phi}, r) = 0$. 
In the case of a principal $\Gamma$ bundle 
over a circle for a Lie group $\Gamma$, 
its attaching map is a left translation $\phi_a$ by some $a \in \Gamma$ and 
the integer $k = k(\pi_{\phi_a}, r)$ can be described in term of $a$. 
This observation leads to some examples for the groups of equivariant diffeomorphisms. 

In a succeding paper we study the case of fiber bundles with higher dimensional base manifolds \cite{FY}. 


\section{Basics on conjugation-invariant norms and quasimorphisms}

\subsection{Conjugation-invariant norms} \mbox{} 

First we recall basic facts on conjugation-invariant norms \cite{BIP, Ts1}. 
Suppose $\Gamma$ is a group with the unit element $e$. 
An extended conjugation-invariant norm on $\Gamma$ is a function $q : \Gamma\to [0,\infty]$ which satisfies the following conditions~: \ \ 
for any $g, h\in \Gamma$
\bit 
\itemi $q(g)=0$ iff $g=e$ \hsh 
(ii) $q(g^{-1})=q(g)$ \hsh 
(iii) $q(gh)\leq q(g)+ q(h)$ \hsh 
(iv) $q(hgh^{-1})= q(g)$. 
\eit 
A conjugation-invariant norm on $\Gamma$ is an extended conjugation-invariant norm on $\Gamma$ with values in $[0, \infty)$. 
(Below we abbreviate ``an (extended) conjugation-invariant norm'' to an (ext.) conj.-invariant norm.) 

Note that for any ext.~conj.-invariant norm $q$ on $\Gamma$ the inverse image  
$\Lambda := q^{-1}([0, \infty))$ is a normal subgroup of $\Gamma$ and $q|_\Lambda$ is a conj.-invariant norm on $\Lambda$. Conversely, 
if $\Lambda$ is a normal subgroup of $\Gamma$ and $q : \Lambda \to [0, \infty)$ is conj.-invariant norm on $\Lambda$, then 
its trivial extension $q : \Gamma \to [0, \infty]$ by $q = \infty$ on $\Gamma - \Lambda$ is an ext.~conj.-invariant norm on $G$. 
For any ext.~conj.-invariant norm $q$ on $\Gamma$, 
the $q$-diameter of a subset $A$ of $\Gamma$ is defined by \ $q\hspace{0.2mm}d\,A := \sup\,\{ q(g) \mid g \in A \}$.

A group $\Gamma$ is called \emph{bounded} if any conj.-invariant norm on $\Gamma$ is bounded 
(or equivalently, any bi-invariant metric on $\Gamma$ is bounded).

\begin{example}\label{exp_norm} (Basic construction)

Suppose $S$ is a subset of $\Gamma$. The normal subgroup of $\Gamma$ generated by $S$ is denoted by $N(S)$.  
If $S$ is symmetric ($S = S^{-1}$) and conjugation-invariant ($gSg^{-1} = S$ for any $g \in \Gamma$), then 
$N(S) = S^\infty := \bigcup_{k=0}^\infty S^k$ and 
the ext.~conj.-invariant norm \  
$q_{
\mbox{\tiny $(\Gamma,S)$}} : \Gamma \lra {\Bbb Z}_{\geq 0} \cup \{ \infty \}$ \ is defined by \\[2mm] 
\hspace*{20mm} 
$q_{
\mbox{\tiny $(\Gamma,S)$}
}(g) := 
\left\{ \hspace{-1mm}
\bary[c]{l}
\min \{ k \in {\Bbb Z}_{\geq 0} \mid \text{$g = g_1 \cdots g_k$ for some $g_1, \cdots, g_k \in S$}\}
\hsh (g \in N(S)), \\[2mm] 
\infty \hsh (g \in \Gamma - N(S)).
\eary \right.$ \\[2mm] 
Here, the empty product $(k=0)$ denotes the unit element $e$ in $\Gamma$ and $S^0 = \{ e \}$. 
\end{example}

\subsection{Commutator length and uniform perfectness} \mbox{}

\begin{example}\label{exp_norm} (Commutator length)  

The symbol $\Gamma^c$ denotes the set of commutators in $\Gamma$. 
Since $\Gamma^c$ is symmetric and conjugation-invariant in $\Gamma$, 
we have $[\Gamma, \Gamma] = N(\Gamma^c) = (\Gamma^c)^\infty$ and obtain the ext.~conj.-invariant norm \ 
$q_{(\Gamma,\Gamma^c)} : \Gamma \lra {\Bbb Z}_{\geq 0} \cup \{ \infty \}$. 
We denote $q_{(\Gamma,\Gamma^c)}$ by $cl_\Gamma$ and call it the commutator length of $\Gamma$. 
The diameters $q_{(\Gamma,\Gamma^c)}d\,\Gamma$ and $q_{(\Gamma,\Gamma^c)}d\,A$ $(A \subset \Gamma)$ are denoted by 
$cld\,\Gamma$ and $cld_\Gamma\,A \equiv cld\,(A, \Gamma)$ and called the commutator length diameter of $\Gamma$ and $A$ respectively. 
Sometimes we write $cl(g) \leq k$ in $\Gamma$ instead of $cl_\Gamma(g) \leq k$. 

A group $\Gamma$ is perfect if $\Gamma = [\Gamma, \Gamma]$, that is, any element of $\Gamma$ is written as a product of commutators in $\Gamma$.
We say that a group $\Gamma$ is uniformly perfect if $cld\,\Gamma < \infty$, that is, 
any element of $\Gamma$ is written as a product of a bounded number of commutators in $\Gamma$.

More generally, suppose $S$ is a subset of $\Gamma^c$ and it is symmetric and conjugation-invariant in $\Gamma$.  
Then, $N(S) \subset [\Gamma,\Gamma]$ and we obtain $q_{(\Gamma,S)}$, which is denoted by $cl_{(\Gamma, S)}$ 
and is called the commutator length of $\Gamma$ with respect to $S$. 
The diameters $q_{(\Gamma,S)}d\,\Gamma$ and $q_{(\Gamma,S)}d\,A$ $(A \subset \Gamma)$ are denoted by 
$cld_S\,\Gamma$ and $cld_{(\Gamma,S)}\,A \equiv cld_S\,(A, \Gamma)$ respectively. 
\end{example} 

Every bounded perfect group is uniformly perfect. 

For simplicity we use the notation $b^a := aba^{-1}$ for $a,b \in \Gamma$. 

\subsection{Conjugation-generated norm and relative uniform simplicity} \mbox{} 

Next we recall basic facts on conjugation-generated norms on groups \cite{BIP}. 
Suppose $\Gamma$ is a group. 
For $g \in \Gamma$ let $C(g)$ denote the conjugacy class of $g$ in $\Gamma$ 
and let $C_g := C(g) \cup C(g^{-1})$. 
Since $C_g$ is a symmetric and conjugation invariant subset of $\Gamma$, it follows that $N(g) = N(C_g) = \bigcup_{k\geq 0}(C_g)^k$ 
and we obtain the ext.~conj.-invariant norm $q_{\Gamma, C_g}$ on $\Gamma$. 
This norm is denoted by $\zeta_g$ and called the conjugation-generated norm with respect to $g$. 
Note that $C_h = C_g$ and $\zeta_h = \zeta_g$ for any $h \in C_g$. 

The norms $\zeta_g$ $(g \in \Gamma)$ are used to define the notion of uniform simplicity of a group $\Gamma$. 
Let $\Gamma^\times := \Gamma - \{ e \}$. Consider the quantity \\[1.5mm] 
\hspace*{20mm} 
$\zeta(\Gamma) := \min \{ k \in \IZ_{\geq 0} \cup \{ \infty \} \mid \zeta_g(f) \leq k \text{ for any $g \in \Gamma^\times$ and $f \in \Gamma$}\}.$ \\[0.5mm] 
A group $\Gamma$ is called \emph{uniformly simple} (cf. \cite{Ts3}) if  $\zeta(\Gamma) < \infty$, that is, there is $k \in \IZ_{\geq 0}$ such that for any $f \in \Gamma$ and $g \in \Gamma^\times$, $f$ can be expressed as a product of at most $k$ conjugates of $g$ or $g^{-1}$. 

More generally, for any normal subgroup ${\Lambda}$ of $\Gamma$, we can consider the quantity \\
\hspp $\zeta(\Gamma; \Lambda) := \min \{ k \in \IZ_{\geq 0} \cup \{ \infty \} \mid \zeta_g(f) \leq k \text{ for any $g \in \Gamma-\Lambda$ and $f \in \Gamma$}\}.$ \\[0.5mm] 
We say that a group $\Gamma$ is \emph{uniformly simple} relative to ${\Lambda}$ if  $\zeta(\Gamma; \Lambda) < \infty$, that is, there is $k \in \IZ_{\geq 0}$ such that for any $f \in \Gamma$ and $g \in \Gamma - \Lambda$, $f$ can be expressed as a product of at most $k$ conjugates of $g$ or $g^{-1}$. 

\begin{fact}\label{fact_unif-simple_bounded} \mbox{} 
\benum 
\item If $\zeta_g$ is bounded for some $g \in \Gamma^\times$, then $\Gamma$ is bounded. More precisely, 
if $g \in \Gamma^\times$ and $\zeta_g \leq k$ for some $k \in \IZ_{\geq 0}$, then 
$q \leq k q(g)$ for any ext.~conj.-invariant norm $q$ on $\Gamma$.  
\item $\Gamma$ is uniformly simple iff $\zeta_g$ $(g \in \Gamma^\times)$ are uniformly bounded. 
\item[] $\Gamma$ is uniformly simple relative to $\Lambda$ iff $\zeta_g$ $(g \in \Gamma - \Lambda)$ are uniformly bounded. 
\item $\zeta(\Gamma;\Lambda) \geq \zeta(\Gamma/\Lambda)$. 
\item If $\Gamma$ is uniformly simple, then $\Gamma$ is simple. \\
If $\Gamma$ is uniformly simple relative to $\Lambda$, then 
\bit 
\itemI if $L$ is a normal subgroup of $\Gamma$, then $L \subset \Lambda$ or $L = \Gamma$, 
\itemII $\Gamma/\Lambda$ is uniformly simple, 
so $\Gamma/\Lambda$ is simple ({equivalently}, $\Lambda \subset L \vartriangleleft \Gamma$, then $L = \Lambda$ or $\Gamma$.)
\eit 
\eenum 
\end{fact}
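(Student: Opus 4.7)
The plan is to read off each of the four items directly from the definition $\zeta_g = q_{(\Gamma, C_g)}$, using only the four axioms (i)--(iv) of an extended conjugation-invariant norm and the fact that elements of $N(C_g) = \bigcup_k (C_g)^k$ are classified by their $C_g$-word length. No auxiliary construction is required.

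For (1), given any $f \in \Gamma$, write $f = h_1 \cdots h_m$ with $h_j \in C_g$ and $m = \zeta_g(f) \leq k$. For an arbitrary ext.~conj.-invariant norm $q$, axioms (ii) and (iv) give $q(h_j) = q(g)$ for each $j$, so the triangle inequality (iii) yields $q(f) \leq m q(g) \leq k q(g)$. Thus $q$ is bounded by $k q(g)$, which gives both the quantitative estimate and boundedness of $\Gamma$. Item (2) is then a tautology: $\zeta(\Gamma)$ (resp.\ $\zeta(\Gamma;\Lambda)$) is by definition the supremum of $\zeta_g(f)$ over the stated ranges of $g$ and $f$, so finiteness of that supremum is equivalent to uniform boundedness of the family $\{\zeta_g\}$ over those ranges.

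For (3), let $\pi : \Gamma \to \Gamma/\Lambda$ be the quotient homomorphism. Fix $\bar g \in (\Gamma/\Lambda)^\times$ and $\bar f \in \Gamma/\Lambda$, and lift them to some $g \in \Gamma - \Lambda$ and $f \in \Gamma$. A factorization $f = h_1 \cdots h_m$ with $h_j \in C_g \subset \Gamma$ and $m \leq \zeta(\Gamma;\Lambda)$ projects to $\bar f = \pi(h_1)\cdots \pi(h_m)$, and naturality of conjugation under $\pi$ puts each $\pi(h_j)$ in the $\Gamma/\Lambda$-conjugacy class of $\bar g$ or $\bar g^{-1}$. Hence $\zeta_{\bar g}(\bar f) \leq \zeta(\Gamma;\Lambda)$; taking the supremum over $\bar g, \bar f$ proves the inequality.

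For (4), suppose $\Gamma$ is uniformly simple relative to $\Lambda$ and that $L \vartriangleleft \Gamma$ satisfies $L \not\subset \Lambda$. Pick $g \in L - \Lambda$; then every $f \in \Gamma$ is a product of at most $\zeta(\Gamma;\Lambda)$ conjugates of $g^{\pm 1}$, all of which lie in the normal subgroup $L$, so $f \in L$ and $L = \Gamma$, proving (i). For (ii), item (3) gives uniform simplicity of $\Gamma/\Lambda$, and simplicity of the quotient is the $\Lambda = \{e\}$ case of (i) applied to $\Gamma/\Lambda$. The absolute assertion ``uniformly simple implies simple'' is likewise the $\Lambda = \{e\}$ case of (i). There is no real obstacle to any of this; the only step that is not pure bookkeeping is the check in (3) that $\pi(C_g) \subset C_{\bar g}$, which is immediate from $\pi(aga^{-1}) = \pi(a)\bar g\pi(a)^{-1}$.
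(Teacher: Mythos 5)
Your proof is correct and complete; the paper states this Fact without proof (it is recorded as a recollection of standard material from \cite{BIP}), and your arguments --- bounding $q(f)$ by $\zeta_g(f)\,q(g)$ via axioms (ii)--(iv), projecting $C_g$-factorizations to the quotient for item (3), and absorbing conjugates of $g^{\pm1}$ into a normal subgroup $L \not\subset \Lambda$ for item (4) --- are exactly the intended ones. Nothing is missing; the edge cases (empty product for $f=e$, the trivial inequality when $\zeta(\Gamma;\Lambda)=\infty$) are all harmless.
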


\subsection{Quasimorphisms} \mbox{} 

Suppose $\Gamma$ is a group. A function $\phi : \Gamma \to \IR$ is said to be a quasimorphism if \\
\hspace*{20mm} $\ds D_\phi := \sup_{a,b \in \Gamma} |\phi(ab) - \phi(a) - \phi(b)| < \infty$. \\[1mm] 
The quantity $D_\phi$ is called the defect of $\phi$. A quasimorphism $\phi$ on $\Gamma$ is said to be homogeneous if 
$\phi(a^n) = n\phi(a)$ for any $a \in \Gamma$ and $n \in \IZ$. 

\bfact\label{fact_qm} \mbox{} (cf.\,\cite{Fuj, GG})
\benum
\item Suppose $\phi$ is a quasimorphism on $\Gamma$. 
\bit 
\itemI $|\phi(ab)| \leq |\phi(a) + \phi(b)| + D_\phi \leq |\phi(a)| + |\phi(b)| + D_\phi$ \ for any $a,b \in \Gamma$. 
\vskip 1mm 
\itemII The function \ \ 
$\ds \overline{\phi} : \Gamma \to \IR$ : \ $\ds \overline{\phi}(a) = \lim_{n \to \infty} \frac{\phi(a^n)}{\,n\,}$ \\[1mm] 
is a well-defined homogeneous quasimorphism, which has the following properties. 

\bit 
\itema $|\phi(a) - \overline{\phi}(a)| \leq D_\phi$ \ \ $(a \in \Gamma)$ 
\hsp (b) \ $D_{\overline{\phi}} \leq 4D_\phi$ 
\eit 
\eit 
\item Suppose $\phi$ is a homogeneous quasimorphism on $\Gamma$. Then, it satisfies the following properties. 
\bit 
\item[(0)\,] If $\phi$ is bounded, then it is trivial. 
\itemI $\phi$ is conjugation-invariant \ (i.e., $\phi(aba^{-1}) = \phi(b)$ \ for any $a,b \in \Gamma$). 
\itemII (a) $\ds \sup_{a,b \in \Gamma} |\phi([a,b])| = D_\phi$. \hsh (b) 
If $a = a_1 \cdots a_k$ \ ($a_1, \cdots, a_k \in \Gamma^c$), then $|\phi(a)| \leq (2k-1) D_\phi$ 
\vskip 1mm 
\itemiii If $\ell := cld \,\Gamma < \infty$, then $|\phi(a)| \leq (2\ell-1) D_\phi$ \ for any $a \in \Gamma$. 
\itemiv Take any $D > 0$ with $D \geq D_\phi$ and define the function $q : \Gamma \to [0, \infty)$ by \\[1mm] 
\hspace*{20mm} $q(a) := \left\{ \hspace{-1mm} 
\bary[c]{ll}
|\phi(a)| + D & (a \in \Gamma^\times) \\[2mm]
0 & (a = e \ \text{(the unit element of $\Gamma$)})
\eary \right.$. \\[2mm]
(a) $q$ is a conjugation-invariant norm on $\Gamma$. \\
(b) $q$ is bounded iff $\phi$ is bounded. 
\eit 
\item The following conclusions follow from (1)\,(ii) and (2)\,(iii),(iv). 
\bit 
\itemI 
 If $\Gamma$ is uniformly perfect, then any 
quasimorphism on $\Gamma$ is bounded. \\
(If $\Gamma$ admits a unbounded 
quasimorphism, then $\Gamma$ is not uniformly perfect.) 
\itemII If $\Gamma$ admits a unbounded  
quasimorphism, then $\Gamma$ is not bounded.
\eit 
\eenum
\efact

\section{Generality on bundle diffeomorphism groups} 

\subsection{Groups of diffeomorphisms and isotopies} \mbox{} 

For a topological space $X$ and its subsets $A$, $B$ 
the symbols ${\rm Int}_XA$ and $Cl_XA$ denote the topological interior and closure of $A$ in $X$ and 
the symbol $A \Subset B$ means $A \subset {\rm Int}_XB$. We set $X_A := X - {\rm Int}_X A$.
The symbols ${\cal O}(X)$, ${\cal F}(X)$ and ${\cal K}(X)$ denote 
the collections of open subsets, closed subsets and compact subsets of $X$ respectively. 
We always use the symbol $I$ to denote the interval $[0,1]$. 
The {symbol} ${\rm pr}_X : X \times Y \to X$ denotes the projection onto $X$. 

In this paper an $n$-manifold means a separable metrizable $C^\infty$ manifold possibly with boundary of dimension $n$. 
A closed manifold means a compact manifold without boundary.  

Suppose $M$ and $N$ are $C^\infty$ manifolds and $r \in \IZ_{\geq 0} \cup \{ \infty \}$.  We use the symbols \\
\hspace*{20mm} $C^r(M, N) \supset {\rm Emb}^r(M, N) \supset {\rm Diff}^r(M, N)$ \\ 
to denote 
the spaces of $C^r$ maps, $C^r$ embeddings and $C^r$ diffeomorphisms from $M$ to $N$, while the symbols \\
\hspace*{20mm} $H^r(M, N) \supset {\rm EIsot}^r(M, N) \supset {\rm Isot}^r(M, N)$ \\ 
denote the spaces of $C^r$ homotopies of maps, embeddings and diffeomorphisms from $M$ to $N$ respectively. 
Each element of ${\rm Isot}^r(M, N)$ is called a $C^r$ isotopy, 
while any element of ${\rm EIsot}^r(M, N)$ is called a non-ambient $C^r$ isotopy.  
For ${\cal E} \subset H^r(M, N)$ and $f,g \in C^r(M, N)$ let ${\cal E}_{f} := \{ F \in {\cal E} \mid F_0 = f \}$ and 
${\cal E}_{f,g} := \{ F \in {\cal E} \mid F_0 = f, F_1 = g \}$. 

For a $C^r$ homotopy $F = (F_t)_{t \in I} : M \times I \to N$
its support is defined by ${\rm supp}\,F := Cl_M \big(\bigcup_{t \in I} {\rm supp}\,F_t\big)$. 
Also we have the associated $C^r$ map $F' : M \times I \to N \times I$, $F'(x,t) = (F(x,t), t)$. 
For the simplicity of notation we use the same symbol $F$ to denote the map $F'$. 
This convention implies that \\
${\rm EIsot}^r(M, N) = H^r(M, N) \cap {\rm Emb}^r(M \times I, N \times I)$ 
and ${\rm Isot}^r(M, N) = H^r(M, N) \cap {\rm Diff}^r(M \times I, N \times I)$. 

{ In the next section we apply some arguments which deform some isotopies to those which satisfy some required conditions. 
Here we have some notational remarks on homotopies between isotopies. Essentially we follow the notations for path-homotopies. 
For ${\cal E} \subset H^r(M, N)$ and $G, H \in {\cal E}$, the symbol $G \simeq H$ ($G \simeq_\ast H$) means that 
$G$ and $H$ are $C^r$ homotopic in ${\cal E}$ (relative to ends), that is, there exists \\ 
\hspace*{10mm} a $C^r$ homotopy \ \ $\Phi = (\Phi_t )_{t \in I} : (M \times I) \times I \lra N \times I$ \ \ such that \\
\hspace*{20mm} $\Phi_0 = G$, \ \ $\Phi_1 = H$, \ \ $\Phi_t \in {\cal E}$ \ $(t \in I)$ \ \ 
(and \ $(\Phi_t)_0 = G_0 = H_0$, \ $(\Phi_t)_1 = G_1 = H_1$). \\
In the case where ${\cal E} = {\rm Isot}^r(M, N)$, the homotopy $\Phi$ is called a $C^r$ isotopy from $G$ to $H$ (rel ends).}

As usual, when $M = N$, we omit the symbol $N$ from the notations 
$C^r(M, N)$, ${\rm Diff}^r(M, N)$ and $H^r(M, N)$, ${\rm Isot}^r(M, N)$. 
For ${\cal E} \subset H^r(M)$ let ${\cal E}_0 := \{ F \in {\cal E} \mid F_0 = \id_M \}$. 

Note that ${\rm Diff}^r(M)$ is a group under the composition. 
For $F, G \in H^r(M)$ their composition $FG \in H^r(M)$ is defined by {$FG = (F_tG_t)_{t \in I}$}. 
Then, ${\rm Isot}^r(M)$ is a group under this composition. 

For a subset $C$ of $M$ we have the following subgroups of these groups:  \\[1mm] 
\hspace*{6mm} ${\rm Isot}^r(M; C)_0 = \{ F \in {\rm Isot}^r(M) \mid F_0 = \id_M, \mbox{$F = \id$ on $U \times I$ for some neighborhood $U$ of $C$ in $M$} \}$, \\[0.5mm] 
\hspace*{6mm} ${\rm Diff}^r(M; C)_0 = \{ F_1 \mid F \in {\rm Isot}^r(M; C)_0 \}$. \\
{In} the above notations we usually omit the superscript $r$ when $r = \infty$ and the symbol $C$ when $C = \emptyset$. 

\subsection{Generalities on fiber bundles} \mbox{}

In this {subsection} we review basics 
on fiber bundles.  
Suppose $\pi : M \to B$ is a $C^\infty$ locally trivial bundle with fiber $N$. { (In this paper we assume that $N$ has no boundary.)} 
For notational simplicity, for $q \in B$ the fiber $\pi^{-1}(q)$ is denoted by $M_q$ 
and for any subset $C \subset B$ let $\widetilde{C} := \pi^{-1}(C)$. 
A local trivialization of $\pi$ is a pair $(U, \phi)$ of an open subset $U$ of $B$ and 
a $C^\infty$ diffeomorphism $\phi : \widetilde{U} \to U \times N$ with ${\rm pr}_U \phi = \pi|_{\widetilde{U}}$. 
Note that $\phi$ restricts to a $C^\infty$ diffeomorphism $\phi_q : M_q \to N$ for each $q \in U$.  
For local trivializations $(U, \phi)$ and $(V, \psi)$ of $\pi$, 
we have the transition function $\eta = \eta(\phi, \psi) : U \cap V \to {\rm Diff}(N)$ defined by $\eta_q = \psi_q \phi_q^{\ -1}$ $(q \in U \cap V)$.  
It is natural to say that a map $\phi : L \to {\rm Diff}(N)$ from a $C^\infty$ manifold $L$ is $C^\infty$ 
if the induced map $\widehat{\phi} : L \times N \to N$, $\widehat{\phi}(u,x) = \phi(u)(x)$, is $C^\infty$. 
The transition function $\eta$ is $C^\infty$ in this sense. 
An atlas of $\pi$ is a family ${\cal U} = \{ (U_\lambda, \phi_\lambda) \}_{\lambda \in \Lambda}$ of local trivializations of $\pi$ with 
$B = \cup_{\lambda \in \Lambda} U_\lambda$, 
which induces the family of transition functions, $\eta_{\cal U} = \{ \eta_{\lambda, \mu} \equiv \eta(\phi_\lambda, \phi_\mu) \}_{(\lambda, \mu) \in \Lambda^2}$. 
This family satisfies the transition condition in ${\rm Diff}(N)$: \\
\hspace*{3mm} $(\ast)$ \ $\eta_{\mu, \nu}(q) \eta_{\lambda, \mu}(q) = \eta_{\lambda, \nu}(q)$ \hsf $(\lambda, \mu, \nu \in \Lambda, \ q \in U_\lambda \cap U _\mu \cap U_\nu)$ \hsf and \hsf  
$\eta_{\lambda, \lambda}(q) = \id_N$ \hsf $(\lambda \in \Lambda, \ q \in U_\lambda)$. \\
This condition implies that $\eta_{\mu, \lambda}(q) = \big(\eta_{\lambda, \mu}(q)\big)^{-1}$ \hsf $(\lambda, \mu \in \Lambda, \ q \in U_\lambda \cap U_\mu)$. 

We incorpolate the notion of structure group to locally trivial bundles. 
We restrict ourselves to the simple case where $\Gamma$ is a subgroup of ${\rm Diff}(N)$.
In this case, we say that a map $\phi : L \to \Gamma$ from a $C^\infty$ manifold $L$ is $C^\infty$ if 
the associated map $\phi : L \to {\rm Diff}(N)$ is $C^\infty$.
A $\Gamma$-atlas of $\pi$ is an atlas ${\cal U} = \{ (U_\lambda, \phi_\lambda) \}_{\lambda \in \Lambda}$ of $\pi$ such that 
each transition function in $\eta_{\cal U}$ has its image in $\Gamma$, that is, 
$\eta_{\lambda, \mu}(q) = (\phi_\mu)_q(\phi_\lambda)_q^{\ -1} \in \Gamma$ for each $\lambda, \mu \in \Lambda$ and $q \in U_\lambda \cap U_\mu$. 
Note that each $\eta_{\lambda, \mu} : U_\lambda \cap U_\mu \to \Gamma$ is $C^\infty$ in the above sense and 
that the family $\eta_{\cal U} = \{ \eta_{\lambda, \mu} \}$ satisfies the transition condition $(\ast)$. 
 
A fiber bundle with fiber $N$ and structure group $\Gamma$ (or {an} $(N, \Gamma)$-bundle) means  
a locally trivial bundle $\pi : M \to B$ with fiber $N$ provided with a $\Gamma$-atlas ${\cal U}$ which is maximal with respect to the inclusion relation. 
Each element of ${\cal U}$ is called a local trivialization of $\pi$. 
For each $q \in B$, the family $\Gamma_{\pi, q} := \{ \phi_q \in {\rm Diff}^r(M_q, N) \mid (U, \phi) \in {\cal U}, q \in U \}$ 
determines a $\Gamma$-structure on $M_q$. 
When $C$ is a $C^\infty$ submanifold of $B$, the restriction $\pi_C := \pi|_{\widetilde{C}} : \widetilde{C} \to C$ 
becomes canonically {an} $(N, \Gamma)$-bundle (together with a $\Gamma$-atlas ${\cal U}|_C$). 
{ In this formulation, for a Lie group $\Gamma$, 
the standard definition of principal $\Gamma$ bundles 
can be interpreted as $(\Gamma, \Gamma_L)$-bundles, where \\
\hsp $\Gamma_L := \{ \phi_a \mid a \in \Gamma \} < {\rm Diff}^\infty(\Gamma)$ and 
$\phi_a$ is the left translation on $\Gamma$ by $a$. }

Next we review basic definitions on $C^r$ maps between fiber bundles for $r \in \IZ_{\geq 0} \cup \{ \infty \}$. 
Suppose $\pi : M \to B$ and $\pi' : M' \to B'$
are $(N, \Gamma)$-bundles (together with a maximal $\Gamma$-atlas ${\cal U}$ and ${\cal U}'$ respectively). 
A $C^r$ map $f : \pi \to \pi'$ means 
a $C^r$ map $f \in C^r(M, M')$ such that 
\bit 
\itemI $\pi' f = \underline{f}\,\pi$ for some map $\underline{f} : B \to B'$ and 
\item[(ii)$'$] $\psi_{\underline{f}(q)} f_q (\phi_q)^{-1} \in \Gamma$ 
for any $(U, \phi) \in {\cal U}$, $(V, \psi) \in {\cal U}'$ and $q \in U \cap \underline{f}^{-1}(V)$. 
\eit 
Here, the symbol $f_q : M_q \to M'_{\underline{f}(q)}$ denotes the restriction of $f$ obtained by the condition (i).
Note that $\underline{f}$ is uniquely determined by $f$ and $\underline{f} \in C^r(B, B')$ 
and that $f_q \in {\rm Diff}^r(M_q, M'_{\underline{f}(q)})$ in the condition (ii).

Let $C^r(\pi, \pi')$ denote the subset of $C^r(M, M')$ consisting of all $C^r$ maps from $\pi$ to $\pi'$. 
It includes the following subsets: 
\hsh $C^r(\pi, \pi') \supset {\rm Emb}^r(\pi, \pi') \supset {\rm Diff}^r(\pi, \pi')$, \\
where \hsf 
${\rm Emb}^r(\pi, \pi') := C^r(\pi, \pi') \cap {\rm Emb}^r(M, M')$ \hsf and \hsf 
${\rm Diff}^r(\pi, \pi') := C^r(\pi, \pi') \cap {\rm Diff}^r(M, M')$. \\
We call each element of ${\rm Diff}^r(\pi, \pi')$ a $C^r$ diffeomorphism from $\pi$ onto $\pi'$. 
{Since each $f \in C^r(\pi, \pi')$ is a fiberwise $C^r$ diffeomorphism, it follows that 
$f \in {\rm Emb}^r(\pi, \pi')$ iff $\underline{f} \in {\rm Emb}^r(B, B')$ and 
$f \in {\rm Diff}^r(\pi, \pi')$ iff $\underline{f} \in {\rm Diff}^r(B, B')$.}

A $C^r$ homotopy from $\pi$ to $\pi'$ means a $C^r$ homotopy $F \in H^r(M, M')$ with $F_t \in C^r(\pi, \pi')$ $(t \in I)$. 
Let $H^r(\pi, \pi')$ denote the set of $C^r$ homotopies from $\pi$ to $\pi'$. It includes the following subsets: \\ 
\hsp $H^r(\pi, \pi') \supset {\rm EIsot}^r(\pi, \pi') \supset {\rm Isot}^r(\pi, \pi')$, \\
where \hsf 
${\rm EIsot}^r(\pi, \pi') := H^r(\pi, \pi') \cap {\rm EIsot}^r(M, M')$ \hsf and \hsf 
${\rm Isot}^r(\pi, \pi') := H^r(\pi, \pi') \cap {\rm Isot}^r(M, M')$. \\
We may call each element of ${\rm EIsot}^r(\pi, \pi')$ a non-ambient $C^r$ isotopy from $\pi$ to $\pi'$. 
Note that each $F \in H^r(\pi, \pi')$ yields $\underline{F} := \{ \underline{F_t} \}_{t \in I} \in H^r(B, B')$ 
and that $\underline{F} \in {\rm EIsot}^r(B, B')$ if $F \in {\rm EIsot}^r(\pi, \pi')$ and 
$\underline{F} \in {\rm Isot}^r(B, B')$ if $F \in {\rm Isot}^r(\pi, \pi')$. 

{ For homotopies between $C^r$ homotopies from $\pi$ to $\pi'$ we can apply the general notations introduced in \S3.1.  
For instance, for ${\cal E} \subset H^r(\pi, \pi')$  and $G, H \in {\cal E}$, the symbol $\Phi : G \simeq_\ast H$ means that 
$\Phi = (\Phi_t )_{t \in I} : (M \times I) \times I \lra M' \times I$ is a $C^r$ homotopy from $G$ and $H$ in ${\cal E}$ relative to ends. 
In the case where ${\cal E} = {\rm Isot}^r(\pi, \pi')$, the homotopy $\Phi$ is called a $C^r$ isotopy from $G$ to $H$ rel ends.}

{ Since we are mainly concerned with an ambient fiber bundle and its restrictions, 
below we use more familiar notations which emphasize the total space of the ambient fiber bundle. 
Suppose $\pi : M \to B$ is a fiber bundle with fiber $N$ and structure group $\Gamma$ and let $r \in \IZ_{\geq 0} \cup \{ \infty \}$. 
We obtain the subgroups \\
\hspace*{20mm} ${\rm Diff}^r_\pi(M) := {\rm Diff}^r(\pi, \pi) < {\rm Diff}^r(M)$ \hsh and \hsh 
${\rm Isot}^r_\pi(M) := {\rm Isot}^r(\pi, \pi) < {\rm Isot}^r(M)$. \\
Note that $F_t \in {\rm Diff}^r_\pi(M)$ $(t \in I)$ for $F \in {\rm Isot}^r_\pi(M)$. 
So we can consider the subgoups \\
\hspace*{20mm} ${\rm Isot}^r_\pi(M)_0 := {\rm Isot}^r_\pi(M) \cap {\rm Isot}^r(M)_0$
and ${\rm Diff}^r_\pi(M)_0 = \{ F_1 \mid F \in {\rm Isot}^r_\pi(M)_0 \}$. \\
The latter plays the role of the identity component of the group ${\rm Diff}^r_\pi(M)$. 
We say that $F \in {\rm Isot}^r_{\pi}(M)$ has compact support with respect to $\pi$ if 
${\rm supp}\,F \subset \pi^{-1}(K)$ for some compact subset $K$ of $B$. 
The symbol ${\rm Isot}^r_{\pi,c}(M)$ denotes the subgroup of ${\rm Isot}^r_{\pi}(M)$ consisting of $F \in {\rm Isot}^r_{\pi}(M)$ with compact support with respect to $\pi$. As before, let ${\rm Diff}^r_{\pi,c}(M)_0 := \{ F_1 \mid F \in {\rm Isot}^r_{\pi,c}(M)_0\}$.}

\subsection{Relative isotopy lifting lemma for fiber bundles} \mbox{} 

{Suppose $\pi : M \to B$ is a fiber bundle with fiber $N$ and structure group $\Gamma$ and let $r \in \IZ_{\geq 0} \cup \{ \infty \}$. 
We further introduce some subgroups of 
${\rm Diff}^r_\pi(M)_0$ relative to subsets of the base space $B$. 
For any subset $C \subset B$ and any $U \in {\cal O}(B)$ we define subgroups of ${\rm Isot}^r_{\pi}(M)_0$ by \\
\hspace*{10mm} ${\rm Isot}^r_{\pi}(M; C)_0 = \{ F \in {\rm Isot}^r_{\pi}(M)_0 \mid 
\text{$F_t|_{\widetilde{V}} = {\rm inc}_{\widetilde{V}}$ $(t \in I)$ for some neighborhood $V$ of $C$ in $B$} \}$, \\ 
\hspace*{10mm} ${\rm Isot}^r_{\pi, c}(M; C)_0 = {\rm Isot}^r_{\pi}(M; C)_0 \cap {\rm Isot}^r_{\pi, c}(M)$ \hsh and \\ 
\hspace*{10mm} ${\rm Isot}^r_{\pi,c}(\widetilde{U})_0 
= \{ F \in {\rm Isot}^r_{\pi,c}(M)_0 \mid \text{${\rm supp}\,F \subset \pi^{-1}(K)$ for some $K \in {\cal K}(U)$}\}$. \\ 
They determine the following subgroups of ${\rm Diff}^r_{\pi}(M)_0$, \\
\hspace*{10mm} ${\rm Diff}^r_{\pi}(M; C)_0 := \{ F_1 \mid F \in {\rm Isot}^r_{\pi}(M; C)_0 \}$, \ \  
${\rm Diff}^r_{\pi, c}(M; C)_0 := \{ F_1 \mid F \in {\rm Isot}^r_{\pi, c}(M; C)_0 \}$ \ \ and \\ 
\hspace*{10mm} ${\rm Diff}^r_{\pi,c}(\widetilde{U})_0 := \{ F_1 \mid F \in {\rm Isot}^r_{\pi,c}(\widetilde{U})_0\}$. \\ 
Each $F \in {\rm Isot}^r_\pi(M)$ induces the $C^r$ isotopy $\underline{F} = \{ \underline{F_t} \}_{t \in I} \in {\rm Isot}^r(B)$ and 
if $F \in {\rm Isot}^r_\pi(M; C)_0$, then $\underline{F} \in {\rm Isot}^r(B; C)_0$. 
It follows that $\underline{f} \in {\rm Diff}^r(B; C)_0$ for each $f \in {\rm Diff}^r_\pi(M; C)_0$. 
From the definition, for $B_C = B - {\rm Int}_BC$, it follows that 
$F \in {\rm Isot}_\pi^r(M; B_C)_0$ iff $F \in {\rm Isot}_\pi^r(M)_0$ and 
${\rm supp}\,F \subset \widetilde{D}$ for some $D \in {\cal F}(B)$ with $D \Subset C$ 
and that 
$f \in {\rm Diff}_\pi^r(M; B_C)_0$ means that $f = F_1$ for some $F \in {\rm Isot}_\pi^r(M; B_C)_0$.  
Note that $g({\rm Diff}_\pi^r(M; B_C)_0)g^{-1} = {\rm Diff}_\pi^r(M; B_{\underline{g}(C)})_0$ \ for any $g \in {\rm Diff}_\pi^r(M)_0$ 
and that for any $U \in {\cal O}(B)$ we have the restriction $\pi_U : \widetilde{U} \to U$ of $\pi$ and 
there exists a canonical isomorphism \\
\hsp \hsh ${\rm Diff}^r_{\pi_U, c}(\widetilde{U})_0 \cong {\rm Diff}^r_{\pi, c}(\widetilde{U})_0 = {\rm Diff}^r_{\pi, c}(M; B_U)_0$.}

When $C$ is a submanifold of $B$, 
the restriction $\pi_C \equiv \pi|_{\widetilde{C}} : \widetilde{C} \to C$ is also a fiber bundle with fiber $N$ and structure group $\Gamma$.
In this situation, we use the notations: \\
\hspace*{20mm} ${\rm Emb}^r_\pi(\widetilde{C}, M) := {\rm Emb}^r(\pi_C, \pi)$ \hsf and \hsf 
${\rm EIsot}^r_\pi(\widetilde{C}, M) := {\rm EIsot}^r(\pi_C, \pi)$. \\
As above, let 
\btab[t]{l}
${\rm EIsot}^r_\pi(\widetilde{C}, M)_0 = \{ F \in {\rm EIsot}^r_\pi(\widetilde{C}, M) \mid F_0 = {\rm inc}_{\widetilde{C}} \}$ \ and \\[1.5mm]
${\rm Emb}^r_\pi(\widetilde{C}, M)_0 = \{ F_1 \mid F \in {\rm EIsot}^r_\pi(\widetilde{C}, M)_0 \}$. 
\etab 

Each $F \in {\rm Isot}^r_\pi(M)_0$ induces the restriction $F_C := F|_{\widetilde{C} \times I} \in {\rm EIsot}^r_\pi(\widetilde{C}, M)_0$ and 
each $f \in {\rm Diff}^r_\pi(M)_0$ induces the restriction $f_C := f|_{\widetilde{C}} \in {\rm Emb}^r_\pi(\widetilde{C}, M)_0$. 

Recall that for any set ${\cal E}$ of homotopies the symbol ${\cal E}_{f}$ denotes the subset ${\cal E}_{f} \equiv \{ F \in {\cal E} \mid F_0 = f \}$.

\begin{prop}\label{rel_iso_lift} $($Relative isotopy lifting property$)$ \\ 
Suppose $\pi : M \to B$ is an $(N, \Gamma)$-bundle,  
$r \in \IZ_{\geq 0} \cup \{ \infty \}$, 
$f \in {\rm Diff}^r_\pi(M)$,  
$G \in {\rm Isot}^r(B)_{\underline{f}}$ and $F' \in {\rm EIsot}^r_\pi(\widetilde{U}, M)_{f|_{\widetilde{U}}}$ with $\underline{F'} = G|_{U \times I}$, 
where $K \in {\cal F}(B)$ and $U$ is an open neighborhood of $K$ in $B$. 
Then, there exists $F \in {\rm Isot}^r_\pi(M)_f$ such that $\underline{F} = G$ and $F = F'$ on $\widetilde{K} \times I$. 
When $K$ is a submanifold of $B$, we do not need to take a neighborhood $U$ of $K$. 

\end{prop}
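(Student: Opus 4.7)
The plan is to construct $F$ in two stages. First I would build an auxiliary bundle isotopy $H \in {\rm Isot}^r_\pi(M)_0$ with $\underline{H} = G$, ignoring the boundary condition on $\widetilde{K}$; then I would correct $H$ by composing with a vertical (base-identity, fiber-preserving) isotopy supported near $\widetilde{K}$ so that the composite agrees with $F'$ there.

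For Stage 1, I would invoke the isotopy covering property for fiber bundles with structure group $\Gamma$. Choose a locally finite refinement $\{(U_\lambda, \phi_\lambda)\}$ of the maximal $\Gamma$-atlas of $\pi$ together with a subordinate $C^r$ partition of unity $\{\rho_\lambda\}$ on $B$. On each $\widetilde{U_\lambda}$ the isotopy $G$ has a tautological fiber-preserving lift $\phi_\lambda^{-1}(G_t \times \id_N)\phi_\lambda$. Partitioning the time interval $I$ into finitely many short subintervals $[t_{i-1},t_i]$ on each of which $G$ moves any fixed compactum of $B$ only a small amount, the local trivialized lifts can be composed inductively to produce $H \in {\rm Isot}^r_\pi(M)_0$ with $\underline{H} = G$. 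Membership in ${\rm Diff}^r_\pi(M)$ at each $t$ is preserved because lift differences on overlaps are determined by transition functions in $\Gamma$ and hence act fiberwise by elements of $\Gamma_q$.

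For Stage 2, set $\Psi := H^{-1} F' \in {\rm EIsot}^r_\pi(\widetilde{U}, M)_0$. Since both $H$ and $F'$ cover $G$ over $U$, each $\Psi_t$ covers $\id_U$ and restricts on every fiber $M_q$ $(q \in U)$ to an element of $\Gamma_q$. Choose open neighborhoods $K \subset V \Subset W \Subset U$ and a smooth cutoff $\beta : B \to [0,1]$ with $\beta \equiv 1$ on $V$ and ${\rm supp}\,\beta \subset W$. Define $\widetilde{\Psi} \in {\rm Isot}^r_\pi(M)_0$ by
\[
\widetilde{\Psi}_t(x) \;=\; \begin{cases} \Psi_{t\beta(\pi(x))}(x), & x \in \widetilde{U}, \\ x, & x \notin \widetilde{W}, \end{cases}
\]
the two cases agreeing on $\widetilde{U} \setminus \widetilde{W}$ since $\beta(\pi(x)) = 0$ there and $\Psi_0 = \id$. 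Fiberwise $\widetilde{\Psi}_t$ acts by $\Psi_{t\beta(q)}|_{M_q} \in \Gamma_q$, so $\widetilde{\Psi}$ is $\Gamma$-structure-preserving over $\id_B$, and it agrees with $\Psi$ on $\widetilde{V} \supset \widetilde{K}$. Setting $F := H\widetilde{\Psi}$ then gives $\underline{F} = \underline{H} = G$ and, on $\widetilde{K} \times I$, $F_t = H_t \Psi_t = H_t H_t^{-1} F'_t = F'_t$, as required.

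The main obstacle is Stage 1: producing an ambient $\Gamma$-structure-preserving lift of $G$ without access to a $\Gamma$-connection (since $\Gamma$ is only a subgroup of ${\rm Diff}(N)$, not a Lie group). The time-slicing trick, which reduces the global lifting to inductively composing finitely many fiber-preserving local lifts where a single trivialization suffices, is what makes this work at this level of generality. Once $H$ exists, Stage 2 is the routine bump-function-with-time-reparametrization construction, and combining the two delivers $F$.
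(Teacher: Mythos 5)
The paper states Proposition~\ref{rel_iso_lift} without proof, so there is no in-paper argument to compare against; judged on its own terms, your Stage~2 is correct and standard. Once an ambient lift $H$ of $G$ exists, $\Psi=H^{-1}F'$ is a vertical non-ambient isotopy of $\widetilde{U}$ covering $\id_U$, and the cut-off time-reparametrization $x\mapsto \Psi_{t\beta(\pi(x))}(x)$ stays inside the $\Gamma$-structure because each of its fiberwise maps is literally one of the maps $(\Psi_s)_q\in\Gamma_{\pi,q}$ rather than an interpolation of them; gluing with the identity off $\widetilde{W}$ and setting $F=H\widetilde{\Psi}$ is fine. The overall architecture (absolute lift first, vertical correction second) is sound.

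The genuine gap is in Stage~1, precisely the step you flag as the main obstacle. Two problems. First, $\phi_\lambda^{-1}(G_t\times\id_N)\phi_\lambda$ only makes sense when $G_t(U_\lambda)\subset U_\lambda$; in general one needs a source chart and a target chart, which is repairable. Second, and seriously: you never say how the local lifts are merged into one diffeomorphism of $M$. They genuinely disagree on overlaps --- by fiberwise elements of $\Gamma$ coming from transition functions --- and noting that the discrepancy ``acts fiberwise by elements of $\Gamma_q$'' does not remove it. Two distinct diffeomorphisms cannot be combined by a partition of unity, and for an abstract subgroup $\Gamma<{\rm Diff}(N)$ there are no canonical paths in $\Gamma$ from a transition value to the identity, so the discrepancy cannot be damped to zero near the edge of a chart either. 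Time-slicing only reduces you to lifting a $C^0$-small isotopy, which still moves points across chart boundaries, so the overlap problem recurs on every slice. The standard mechanism that makes this work (as in the proof that bundles over $B\times I$ are pulled back from $B\times\{0\}$, transplanted to the graph $(q,t)\mapsto(G_t(q),t)$) uses the partition of unity not as averaging weights but as the \emph{amount of time-slide} performed in each chart: one composes a locally finite sequence of bundle maps, the $\lambda$-th of which slides the $t$-coordinate from the graph of $\rho_1+\cdots+\rho_{\lambda-1}$ to the graph of $\rho_1+\cdots+\rho_\lambda$ inside the trivialization over $U_\lambda$, and which equals the identity wherever $\rho_\lambda=0$, hence extends by the identity outside $U_\lambda$; each such map is fiberwise a composite of transition functions and so lies in the $\Gamma$-structure. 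That supported-local-slide idea is what is missing; without it, Stage~1 is an assertion rather than a construction, and the proof is incomplete.
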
 


Note that the statement of Proposition~\ref{rel_iso_lift} is readily reduced to the standard case where $f = \id_M$ 
by replacing $G$ and $F'$ with $(\underline{f}^{-1} \times \id_I)G$ and $(f^{-1} \times \id_I)F'$ respectively. 

\begin{cor}\label{cor 1 rel_iso_lift} Suppose $K, L \in {\cal F}(B)$, 
$F \in {\rm Isot}^r_\pi(M; K \cap L)_0$ and $\underline{F}= G'H'$, where 
$G' \in {{\rm Isot}^r(B; L)_0}$, $H' \in {{\rm Isot}^r(B; K)_0}$. 
Then $F = GH$ for some $G \in {\rm Isot}^r_\pi(M; L)_0$ and $H \in {\rm Isot}^r_\pi(M; K)_0$ with 
$\underline{G} = G'$ and $\underline{H} = H'$. 
\end{cor}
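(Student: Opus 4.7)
The plan is to construct $G$ first, as a lift of $G'$ that is simultaneously the identity on a neighborhood of $\widetilde L$ and coincides with $F$ on a neighborhood of $\widetilde K$; then $H:=G^{-1}F$ automatically lifts $H'$ and is the identity near $\widetilde K$, yielding the required factorization $F=GH$.

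To set things up, I would first fix open neighborhoods $U_L\supset L$, $V_K\supset K$, and $W\supset K\cap L$ in $B$ with $G'|_{U_L\times I}=\id$, $H'|_{V_K\times I}=\id$, and $F|_{\widetilde W\times I}=\id$, arranging $W\subset U_L\cap V_K$ by shrinking $W$. Using normality of $B$ on the disjoint closed sets $L\setminus W$ and $K\setminus W$, I would take disjoint open neighborhoods $U_1\subset U_L$ of $L\setminus W$ and $V_1\subset V_K$ of $K\setminus W$, and set $U:=U_1\cup W\supset L$ and $V:=V_1\cup W\supset K$. Then $U\subset U_L$, $V\subset V_K$, and $U\cap V\subset W$ (because $U_1\cap V_1=\emptyset$). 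Shrinking once more yields $U_0\supset L$ and $V_0\supset K$ with $\overline{U_0}\subset U$ and $\overline{V_0}\subset V$. Next I would define a partial lift $F^{\ast}\in{\rm EIsot}^r_\pi(\widetilde{U\cup V},M)_0$ by declaring $F^{\ast}$ to be the constant isotopy at the inclusion on $\widetilde U\times I$ and equal to $F|_{\widetilde V\times I}$ on $\widetilde V\times I$. Consistency on $\widetilde{U\cap V}$ follows from $U\cap V\subset W$, which forces $F=\id$ there; and $\underline{F^{\ast}}=G'|_{(U\cup V)\times I}$ holds because on $U$ both sides equal $\id$, while on $V$ we have $\underline F=G'H'=G'$ (using $H'=\id$ on $V\subset V_K$).

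Applying Proposition \ref{rel_iso_lift} with closed set $\overline{U_0}\cup\overline{V_0}$, open neighborhood $U\cup V$, base isotopy $G'$, and partial lift $F^{\ast}$ yields $G\in{\rm Isot}^r_\pi(M)_0$ with $\underline G=G'$ and $G=F^{\ast}$ on $\pi^{-1}(\overline{U_0}\cup\overline{V_0})\times I$. In particular $G|_{\widetilde{U_0}\times I}=\id$, so $G\in{\rm Isot}^r_\pi(M;\widetilde L)_0$, and $G|_{\widetilde{V_0}\times I}=F|_{\widetilde{V_0}\times I}$. Setting $H:=G^{-1}F$, one checks $\underline H=(G')^{-1}\underline F=(G')^{-1}G'H'=H'$ and $H|_{\widetilde{V_0}\times I}=\id$, so $H\in{\rm Isot}^r_\pi(M;\widetilde K)_0$ and $F=GH$ as required.

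The only nontrivial step is the neighborhood arrangement producing $U$ and $V$ with $U\cap V\subset W$. This is a standard normality argument, but it is precisely where the hypothesis $F\in{\rm Isot}^r_\pi(M;\widetilde K\cap\widetilde L)_0$ is used---without it, the two prescriptions defining $F^{\ast}$ would be incompatible on the overlap. Once $F^{\ast}$ is in place, the rest reduces to one invocation of the relative isotopy lifting proposition plus algebraic manipulation.
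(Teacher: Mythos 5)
Your proof is correct and follows essentially the same route as the paper's: both glue $F$ over a neighborhood of $\widetilde{K}$ with the identity over a neighborhood of $\widetilde{L}$ (using $F=\id$ near $\widetilde{K}\cap\widetilde{L}$ for consistency on the overlap) to produce a partial lift of $G'$, invoke Proposition~\ref{rel_iso_lift}, and set $H:=G^{-1}F$; you merely make explicit the normality argument for arranging $U\cap V\subset W$, which the paper leaves implicit. The one detail you elide is the verification that $F^{\ast}$ is an embedding (needed for membership in ${\rm EIsot}^r_\pi(\widetilde{U\cup V},M)_0$, and checked explicitly in the paper), but it follows immediately from the relation $\underline{F^{\ast}}=G'|_{(U\cup V)\times I}$ you already established, since two points with the same $F^{\ast}$-image must then lie in a common fiber, on which $F^{\ast}_t$ is injective.
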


\begin{proof}
Take $K_1, L_1, \in {\cal F}(B)$ and $U, V \in {\cal O}(B)$ such that \\[1mm] 
\hsp $K \Subset K_1 \subset U$, $L \Subset L_1 \subset V$ and 
$F \in {\rm Isot}^r_\pi(M; U \cap V)_0$, $G' \in {\rm Isot}^r(B; V)_0$, $H' \in {\rm Isot}^r(B; U)_0$. \\[1mm] 
We can define a non-ambient isotopy \\[1mm] 
\hspp $G^\ast \in {\rm EIsot}^r_\pi(\widetilde{U} \cup \widetilde{V}, M)_0$ \ \ by \ \ 
$G^\ast = F$ on $\widetilde{U} \times I$ and $G^\ast = \id$ on $\widetilde{V} \times I$. \\[1mm] 
{ 
In fact, (i) $G^\ast$ is well-defined as a $C^r$ map between $(N, \Gamma)$ fiber bundles, since $F = \id$ on $(\widetilde{U} \cap \widetilde{V}) \times I$, \\
(ii) $\underline{G^\ast} = G'$ on $(U \cup V) \times I$, 
since $\underline{G^\ast}|_{U \times I} = \underline{F}|_{U \times I} = G'H'|_{U \times I} = G'|_{U \times I}$ and 
$\underline{G^\ast}  = \id = G'$ on $V \times I$, 
(iii) $G^\ast$ is an embedding since so is $\underline{G^\ast}$. 
}

By Proposition~\ref{rel_iso_lift} there exists $G \in {\rm Isot}^r_\pi(M)_0$ such that $\underline{G} = G'$ and 
$G = G^\ast$ on $(\widetilde{K}_1 \cup \widetilde{L}_1) \times I$. 
Then $G \in {{\rm Isot}^r_\pi(M; L)_0}$ and 
$G = F$ on $\widetilde{K}_1 \times I$. 
Let $H := G^{-1}F$. 
It follows that $H \in {\rm Isot}^r_\pi(M;K)_0$ and 
$\underline{H} = \underline{G}^{-1}\underline{F} = (G')^{-1} \underline{F} = H'$. 
\end{proof} 

\begin{cor}\label{cor 2 rel_iso_lift} $($Lifting of an isotopy of isotopies$)$ \\
Consider the $(N, \Gamma)$-bundle $\pi \times \id_I : M \times I \lra B \times I$ with $\partial B = \emptyset$. 
Suppose $\Psi = (\Psi_t)_{t \in I}$ is an isotopy in ${\rm Isot}^r(B)$ $($i.e., $\Psi \in {\rm Isot}^r(B \times I)$ with $\Psi_t \in {\rm Isot}^r(B)$ $(t \in I)$$)$. 
Then, for any $F \in {\rm Isot}^r_\pi(M)$ with $\underline{F} = \Psi_0$ there exists a bundle isotopy $\Phi = (\Phi_t)_{t \in I}$ in ${\rm Isot}^r_{\pi}(M)$ 
$($i.e., $\Phi \in {\rm Isot}^r_{\pi \times \id_I}(M \times I)$ with $\Phi_t \in {\rm Isot}^r_{\pi}(M)$ $(t \in I)$$)$
such that $\Phi_0 = F$ and $\underline{\Phi} = \Psi$. If $\Psi$ is an isotopy rel ends 
$($i.e., $(\Psi_t)_0 = (\Psi_0)_0$ and $(\Psi_t)_1 = (\Psi_0)_1$$)$, 
then we can take $\Phi$ a bundle isotopy rel ends $($i.e., $(\Phi_t)_0 = (\Phi_0)_0$ and $(\Phi_t)_1 = (\Phi_0)_1$$)$. 
\end{cor} 


\begin{proof} We apply Proposition~\ref{rel_iso_lift}  to the fiber bundle 
$\pi \times \id_I$, the isotopy $\Psi$ on the base $B \times I$ and 
the 0-level lift $F$ of $\Psi_0$, which yields $\Phi \in {\rm Isot}^r_{\pi \times \id_I}(M \times I)$ with $\Phi_0 = F$ and $\underline{\Phi} = \Psi$. 
Since $\Psi_t \in {\rm Isot}^r(B)$, we have $\Phi_t \in {\rm Isot}^r_{\pi}(M)$. 
When $\Psi$ is an isotopy rel ends, 
we can take the submanifold $K \equiv B \times \{ 0, 1 \}$ of $B \times I$ and the partial lift of $\Psi$ to $\widetilde{K} \equiv M \times \{ 0, 1\}$, \\
\hspace*{10mm} $\Phi' \in {\rm EIsot}^r_\pi(M \times \{ 0, 1\}, M \times I)$ : $\Phi'((x,i), t) = (F(x,i),t)$ $(((x,i), t) \in (M \times \{ 0, 1\}) \times I)$. \\
This leads to a bundle isotopy $\Phi$ rel ends. 
\end{proof} 

As usual, if we regard an isotopy as a path in a diffeomorphism group, 
then this result might be interpreted as the lifting of a path-homotopy (rel ends) for the map $P : {\rm Diff}^r_\pi(M) \to {\rm Diff}^r(B)$, 
though we have some technical difficulty on the topology of ${\rm Diff}^r_\pi(M)$ when the fiber $N$ is non-compact.

\subsection{{ \bf Perfectness of bundle diffeomorphism groups}} \mbox{} 

{ In this subsection we discuss some examples of fiber bundles which satisfies Assumption $(\ast)$ in Introduction.
In summary, we have the folloiwng list of examples at the moment. 

Suppose $\pi : M \to B$ is a $C^\infty$ bundle with fiber $N$ and structure group $\Gamma$ and $r \in \IZ_{\geq 1} \cup \{ \infty \}$

\begin{example}~\label{exp_perfectness} The group ${\rm Diff}^r_{\pi, c}(M)_0$ is perfect in the following cases : 
\benum 
\item[{\rm [\hspace{0.2mm}I\hspace{0.2mm}]}] (Principal bundle case) \hsh $\pi$ is a principal $G$ bundle for a compact Lie group $G$ (equivalently, 
$(N, \Gamma) = (G, G_L)$, where $G_L$ is the group of all left translations on $G$), 
$\partial B = \emptyset$, $\dim B \geq 1$ and $r \neq \dim B + 1$. 
\item[{\rm [I\hspace{-0.2mm}I]}] (Locally trivial bundle case) \hsh $\Gamma = {\rm Diff}^\infty(N)$, $N$ is a closed manifold, $\partial B = \emptyset$ and $r = \infty$. 
\eenum 
\end{example}

Below we give complementary explanations to these cases. \\ 
{[\hspace{0.2mm}I\hspace{0.2mm}] Principal bundle case} 

In \cite{AF} K.~Abe and K.~Fukui exhibited the perfectness of equivariant diffeomorphism groups under free action of compact Lie groups. 
The statement [\hspace{0.2mm}I\hspace{0.2mm}] in Example~\ref{exp_perfectness} is a precise translation of their result in term of bundle diffeomorphism groups on principal bundles. \\
{[\hspace{0.2mm}I\hspace{-0.2mm}I\hspace{0.2mm}] Locally trivial bundle case} 

First we recall a basic result on the perfectness of leaf preserving diffeomorphism groups on foliated manifolds, since 
the subgroup of fiber preserving diffeomorphisms in a bundle diffeomorphism group is 
a basic example of leaf preserving diffeomorphism groups. 

The following result is obtained in \cite{Ry} and \cite{Ts1}. 

\begin{thm}\label{thm_perfectness}
Suppose $M$ is a $C^\infty$ manifold with a $C^\infty$ foliation ${\cal F}$. 
Let ${\rm Diff}^\infty_c({\cal F})$ denote the group of $C^\infty$ diffeomorphisms of $M$ with compact support which send each leaf $L$ of ${\cal F}$ to $L$ itself. 
Let ${\rm Diff}^\infty_c({\cal F})_0$ denote the subgroup of ${\rm Diff}^\infty_c({\cal F})$ 
consisting of $f \in {\rm Diff}^\infty_c({\cal F})$ which is isotopic to $\id_M$ by a compactly supported isotopy $F$ with $F_t \in {\rm Diff}^\infty_c({\cal F})$ $(t \in I)$.
Then, ${\rm Diff}^\infty_c({\cal F})_0$ is perfect. 
\end{thm} 

Below we show that the statement [\hspace{0.2mm}I\hspace{-0.2mm}I\hspace{0.2mm}] in Example~\ref{exp_perfectness} is reduced to the above theorem. 

Suppose $\pi : M \to B$ is a $C^\infty$ locally trivial bundle with fiber $N$ and structure group ${\rm Diff}(N)$. 
Assume that $N$ is a closed manifold, $\partial B = \emptyset$ and $r = \infty$. Below we omit the symbol $\infty$ from the notations.

\benum[(1)] 
\item Consider the surjective group homomorphism $P_c : {\rm Diff}_{\pi, c}(M)_0 \lra {\rm Diff}_{c}(B)_0$, $P_c(f) = \underline{f}$. \\
We have the normal subgroups of ${\rm Diff}_{\pi, c}(M)_0$,  \\
\hsp \hsh ${\rm Ker}\,P_c$ \ \ and \ \  
$({\rm Ker}\,P_c)_0 := \{ F_1 \mid F \in {\rm Isot}_{\pi,c}(M)_0, \underline{F} = \id_{B \times I}\}$. \\   
Consider the $C^\infty$ foliation ${\cal F}_\pi = \{ M_b \mid b \in B \}$ on $M$. 
Since $N$ is compact, it follows that \\ 
\hsp \hsh $({\rm Ker}\,P_c)_0 = {\rm Diff}_c({\cal F}_\pi)_0$, \ which is perfect by Theorem~\ref{thm_perfectness}. 

\item First we consider the case where the bundle $\pi : M \to B$ is trivial. \\
(i) \ ${\rm Ker}\,P_c = ({\rm Ker}\,P_c)_0$. 
\bit 
\item[\bec] 
We may assume that $M = B \times N$ and $\pi = \pr_B : B \times N \lra B$. 
Given any $f \in {\rm Ker}\,P_c$. This means that $\underline{f} = \id_B$. 
There exists $F \in {\rm Isot}_{\pi, c}(M)_0$ with $F_1 = f$, which induces 
a $C^\infty$ map $\pr_N F : (B \times N) \times I \lra B \times N \lra N$. 
Note that $\pr_N F_t(b, \ast) \in {\rm Diff}^r(N)$ for any $(b,t) \in B \times I$ and that 
there exists $K \in {\cal K}(B)$
such that $\pr_N F_t(b, \ast) = \id_N$ for $(b,t) \in (B - K) \times I$. 
Therefore, we obtain $G \in {\rm Isot}^r_{\pi, c}(M)_0$ defined by \\
\hsp $G((b,x), t) = (b, \pr_N F_t(b,x))$ \ $((b,x), t) \in (B \times N) \times I)$. \\
Since $G_1 = f$ and $\underline{G} = \id_B$, it follows that $f \in ({\rm Ker}\,P_c)_0$. 
\eit 
(ii) The group ${\rm Diff}_{\pi,c}(M)_0$ is perfect. 
\bit 
\item[\bec] Give any $f \in {\rm Diff}_{\pi,c}(M)_0$. 
Since ${\rm Diff}_{c}^\infty(B)_0$ is perfect, we have a decomposition \\
\hsp $\underline{f} = [g_1', h_1'] \cdots [g_\ell', h_\ell']$ \ for some 
$g_1', h_1', \cdots, g_\ell', h_\ell' \in {\rm Diff}_{c}(B)_0$. \\
Since $P_c : {\rm Diff}_{\pi, c}(M)_0 \lra {\rm Diff}_{c}(B)_0$ is surjective, 
there exist $g_1, h_1, \cdots, g_\ell, h_\ell \in {\rm Diff}_{\pi, c}(M)_0$ such that 
$\underline{g_i} = g_i'$, $\underline{h_i} = h_i'$ $(i=1, \cdots, \ell)$. 
Then, $\widetilde{f} := [g_1, h_1] \cdots [g_\ell, h_\ell]$ 
satisfies $P_c(\widetilde{f}) = \underline{f}$ and 
$f\widetilde{f}^{-1} \in {\rm Ker}\,P_c = [{\rm Ker}\,P_c,{\rm Ker}\,P_c]$.  
Hence, we have \ $f = (f\widetilde{f}^{-1})\widetilde{f} \in [{\rm Diff}_{\pi, c}(M)_0, {\rm Diff}_{\pi, c}(M)_0]$. 
\eit 

\item The general case reduces to the trivial bundle case according to the standard strategy. 
\bit
\item[3-1)] Any $f \in {\rm Diff}_{\pi, c}(M)_0$ has a factorization $f = g_1 \cdots g_m$ so that 
each $g_i$ is sufficiently $C^1$-close to $\id_M$. 
\item[3-2)] If $f$ is sufficiently $C^1$-close to $\id_M$, then we can decompose $f$ as $f = h_1 \cdots h_n$ such that 
each $h_i$ has compact support in a local trivialization of the bundle $\pi$. 
\eit 

\item[] For any $C \in {\cal K}(B)$ we have the following subgroup endowed with the $C^1$-topology \\ 
\hsp \hsp ${\cal D}_C := \{ F_1 \mid F \in {\rm Isot}_\pi(M)_0, \ {\rm supp}\,F \subset \pi^{-1}(C) \} < {\rm Diff}_{\pi,c}(M)_0$. \\
Since $N$ is compact, $\pi^{-1}(C) \in {\cal K}(M)$, so that the $C^1$-topology on ${\cal D}_C$ is controlled on $\pi^{-1}(C)$ even if $M$ is noncompact.

\item[] 3-1) Given $f \in {\rm Diff}_{\pi, c}(M)_0$. There exists $F \in {\rm Isot}_{\pi, c}(M)_{\id, f}$ and $K \in {\cal K}(B)$ with ${\rm supp}\,F \subset \pi^{-1}(K)$. 
Take any $C^1$ nbd ${\cal U}$ of $\id_M$ in ${\cal D}_K$. 
We can find a subdivision $0 = t_0 < t_1 < \cdots < t_m = 1$ of the interval $I$ such that $g_i := F_{t_i}F^{\ -1}_{t_{i-1}} \in {\cal U}$ \ $(i = 1, \cdots, m)$. 
Since $f = g_m \cdots g_2g_1$, we may assume that $f$ itself is arbitrarily $C^1$ close to $\id_M$ in ${\cal D}_K$. 

\item[] 3-2) We can find $K_i \in {\cal K}(B)$, $U_i \in {\cal O}(B)$ $(i=1,2, \cdots, n)$ such that 
$K = \bigcup_{i = 1}^n K_i$, $K_i \subset U_i$ and the bundle $\pi$ admits 
a local trivialization $\phi_i : \pi^{-1}(U_i) \cong U_i \times N$ over $U_i$.  
For each $i = 1, \cdots, n$ take $L_i \in {\cal K}(B)$ with $K_i \Subset L_i \subset U_i$. 
Based on these data, for any $C^1$ nbd ${\cal V}$ of $\id_M$ in ${\cal D}_K$ 
we can find \\
\hsp a sequence \ ${\cal V} \equiv {\cal V}_{n+1} \supset {\cal V}_n \supset \cdots \supset {\cal V}_1 \equiv {\cal U}$ \ 
of $C^1$ nbds of $\id_M$ in ${\cal D}_K$ \\
in which ${\cal V}_i$ is sufficiently small for ${\cal V}_{i+1}$ so that it satisfies the following conditions : \\
\hsh Any $g = g_1 \in {\cal U}$ admits factorizations \ $g = h_1 \cdots h_i \,g_{i+1}$ \ $(i = 1, \cdots, n)$ \ such that 
\bit 
\itema $h_i \in {\cal D}_{K \cap L_i}$, \ $h_i, g_{i+1} \in {\cal V}_{i+1}$, \ $g_{i+1} = \id$ on $\pi^{-1}(K_i)$ \ and \ 
\itemb if $g_i(x) = x$, then $h_i(x) = g_{i+1}(x) = x$. 
\eit 
These conditions imply $g_{n+1} = \id_M$. In fact, 
since $g_1 \in {\cal U} \subset {\cal D}_K$, it follows that ${\rm supp}\,g_1 \subset \pi^{-1}(K)$ and $g_1 = \id$ on $M - \pi^{-1}(K)$.
Inductively, for each $i = 1, \cdots, n$ we have $g_{i+1} = \id$ on $\big(M - \pi^{-1}(K)\big) \cup \bigcup_{j = 1}^i \pi^{-1}(K_j)$. 
Therefore, $g_{n+1} = \id_M$ and we have the required factorization $g = h_1 \cdots h_n$ with $h_i \in {\cal V} \cap {\cal D}_{L_i}$ $(i = 1, \cdots, n)$.
\eenum
}

\subsection{Basic lemma for commutator length} \mbox{} 

The estimates of $cl$ and $clb_\pi$ is based on the next lemma, 
which is a fiber bundle version of the arguments in \cite{BIP}, \cite[Theorem 2.1]{Ts2}, \cite{Ts3}, \cite[Proof of Theorem 1.1\,(1)]{Ts4}, {etc}. 

Suppose $\pi : M \to B$ is a fiber bundle with fiber $N$ and structure group $\Gamma$ and let $r \in \IZ_{\geq 0} \cup \{ \infty \}$. 

\begin{lem}\label{basic_lemma_cl} \mbox{} 
Suppose $U$ is an open subset of ${\rm Int}\,B$. 
\benum 
\item[{\rm (1)}] If $U$ is an open ball and ${\rm Diff}^r_{\pi,c}(\widetilde{U})_0$ is perfect, then $cld\,{\rm Diff}^r_{\pi,c}(\widetilde{U})_0 \leq 2$. $($cf. \cite{Fu2}$)$
\item[{\rm (2)}] More generally, suppose $F \in {\rm Isot}^r_{\pi, c}(\widetilde{U})_0$ and let $f := F_1 \in {\rm Diff}^r_{\pi, c}(\widetilde{U})_0$. 
Assume that there exist compact subsets $W \Subset V$ in $U$ and $\phi \in {\rm Diff}^r_{c}(U)_0$ such that \\
\hspp $\phi(V) \subset W$ \ \ and \ \ ${\rm Cl}_B \,\pi({\rm supp}\,F) \subset O := {\rm Int}_B V - W$. \\
If ${\rm Diff}^r_{\pi,c}(\widetilde{O})_0$ is perfect
, then 
$cl\,f \leq 2$ \ in ${\rm Diff}^r_{\pi,c}(\widetilde{U})_0$. 
Moreover, there exists a factorization \\ 
$f = gh$ 
\ \ for some $g \in {\rm Diff}^r_{\pi,c}(\widetilde{U})_0^c$ and $h \in {\rm Diff}^r_{\pi,c}(\widetilde{{\rm Int}\,W})_0^c$. 
\eenum 
\end{lem}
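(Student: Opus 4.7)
My plan is to adapt the Mather--Thurston displacement trick to the fiber bundle setting, treating (2) as the main statement and recovering (1) by specialization. The main tool is Proposition~\ref{rel_iso_lift}, which allows me to lift diffeomorphisms of the base to bundle diffeomorphisms of the total space. First I lift an isotopy in ${\rm Isot}^r_c(U)_0$ joining $\id_U$ to $\phi$ via Proposition~\ref{rel_iso_lift}, obtaining $\Phi \in {\rm Diff}^r_{\pi,c}(\widetilde{U})_0$ with $\underline{\Phi} = \phi$. The hypothesis $\phi(V) \subset W \Subset V$ produces the nested chain $V \supset W \supset \phi(V) \supset \phi(W) \supset \phi^2(V) \supset \cdots$, so the closures $\phi^n(\overline{O})$ for $n \geq 0$ are pairwise disjoint (each lying in the shell $\phi^n(V) \setminus \phi^n(W)$). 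Consequently the conjugates $\Phi^n f \Phi^{-n}$ have pairwise disjoint supports and mutually commute.

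For any $N \geq 0$ set $F_N := \prod_{n=0}^N \Phi^n f \Phi^{-n} \in {\rm Diff}^r_{\pi,c}(\widetilde{U})_0$. A direct telescoping using the disjoint-support commutativity (the paired factors $\Phi^m f \Phi^{-m}$ and $\Phi^m f^{-1} \Phi^{-m}$ cancel for $1 \leq m \leq N$) gives
\[ g \, := \, [F_N, \Phi] \, = \, F_N \, \Phi F_N^{-1} \Phi^{-1} \, = \, f \cdot \bigl(\Phi^{N+1} f \Phi^{-(N+1)}\bigr)^{-1}, \]
so setting $h := g^{-1} f = \Phi^{N+1} f \Phi^{-(N+1)}$ yields the factorization $f = gh$ with $g \in {\rm Diff}^r_{\pi,c}(\widetilde{U})_0^c$. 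Taking $N$ large (or mildly shrinking $W$ so that $\phi(V) \subset {\rm Int}\,W$), the support of $h$ lies in $\widetilde{\phi^{N+1}(V)} \subset \widetilde{{\rm Int}\,W}$, and $h \in {\rm Diff}^r_{\pi,c}(\widetilde{{\rm Int}\,W})_0$ via the conjugated isotopy $\Phi^{N+1} F_t \Phi^{-(N+1)}$.

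It remains to realize $h$ as a \emph{single} commutator in ${\rm Diff}^r_{\pi,c}(\widetilde{{\rm Int}\,W})_0$. Since ${\rm Cl}_B \pi({\rm supp}\,F) \subset O$, one has $f \in {\rm Diff}^r_{\pi,c}(\widetilde{O})_0$, and the perfectness hypothesis gives $f = \prod_{i=1}^k [a_i, b_i]$ with $a_i, b_i$ supported in $\widetilde{O}$; hence $h$ is a product of $k$ commutators supported in $\widetilde{\phi^{N+1}(\overline{O})}$. To fuse these into one, I exploit the infinite chain of pairwise disjoint sub-shells $\phi^n(\overline{O}) \subset {\rm Int}\,W$ for $n \geq N+1$: by distributing the $a_i, b_i$ cyclically along these sub-shells via conjugation by the iterates of $\Phi$ and arranging the resulting products so that the commutators collapse onto the outermost sub-shell, I construct $A, B \in {\rm Diff}^r_{\pi,c}(\widetilde{{\rm Int}\,W})_0$ with $[A,B] = h$. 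Part~(1) then follows by specialization: for $U$ a ball, I pick concentric $W \Subset V \subset U$ with ${\rm Cl}_B \pi({\rm supp}\,F) \subset O$ and a standard contracting $\phi \in {\rm Diff}^r_c(U)_0$, deducing perfectness of ${\rm Diff}^r_{\pi,c}(\widetilde{O})_0$ from that of ${\rm Diff}^r_{\pi,c}(\widetilde{U})_0$ by a fragmentation-and-conjugation argument before applying~(2). The main obstacle is the $C^r$-smoothness of the infinite-product construction for $A, B$: the supports of the factors accumulate at the $\phi$-invariant attractor $\bigcap_n \phi^n(V) \subset W$, so the product is a priori only a homeomorphism. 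The perfectness hypothesis provides exactly the flexibility (via rescaling and conjugation of the commutator factors) to arrange rapid $C^r$-decay on deep sub-shells and secure the required smooth convergence.
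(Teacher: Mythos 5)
The first half of your argument is sound: lifting $\phi$ to $\Phi$ by Proposition~\ref{rel_iso_lift}, observing that the shells $\phi^n(O)$ are pairwise disjoint, and the telescoping identity $[F_N,\Phi]=f\cdot\Phi^{N+1}f^{-1}\Phi^{-(N+1)}$ correctly give $f=gh$ with $g\in{\rm Diff}^r_{\pi,c}(\widetilde{U})_0^c$ and $h=\Phi^{N+1}f\Phi^{-(N+1)}$ supported in $\widetilde{{\rm Int}\,W}$ (already $N=0$ suffices). The genuine gap is the last step, where $h$ must be exhibited as a \emph{single} commutator in ${\rm Diff}^r_{\pi,c}(\widetilde{{\rm Int}\,W})_0$. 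After conjugation, $h$ is a product of $k$ commutators all supported in the \emph{same} shell $\phi^{N+1}(O)$, so the disjoint-support merging identity $[a,b][c,d]=[ac,bd]$ does not apply to them; redistributing them into disjoint sub-shells requires another telescoping and hence an extra commutator. The infinite swindle you propose to force a single commutator does not repair this: the infinite product $\prod_n\Phi^n(\cdot)\Phi^{-n}$ accumulates on the attractor $\bigcap_n\phi^n(V)$ and is in general not $C^r$ for $r\geq 1$ (this is exactly the classical obstruction in Mather-type arguments), and the claim that perfectness ``provides the flexibility to arrange rapid $C^r$-decay'' is unsubstantiated --- perfectness yields \emph{some} commutator factorization with no control on derivatives, while conjugation by $\Phi^n$ generically blows derivatives up. Even formally, the swindle represents $h$ as $[T,\Psi]$ for a displacing map $\Psi$ that you would still have to support inside $\widetilde{{\rm Int}\,W}$.

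The argument the paper intends (the fiber bundle version of \cite{Ts2}, Theorem 2.1, which it cites in place of a proof) uses the perfectness hypothesis \emph{before} telescoping and needs only finite products. Write $f=f_1\cdots f_k$ with $f_j=[a_j,b_j]$ and $a_j,b_j\in{\rm Diff}^r_{\pi,c}(\widetilde{O})_0$, set $u:=\prod_{j=1}^{k}\Phi^{j-1}(f_j\cdots f_k)\Phi^{-(j-1)}$, and use disjointness of $\phi^0(O),\dots,\phi^k(O)$ to compute $[u,\Phi]=f\cdot\prod_{j=1}^{k}\Phi^{j}f_j^{-1}\Phi^{-j}$. Hence $f=[u,\Phi]\cdot\prod_{j=1}^{k}\Phi^{j}f_j\Phi^{-j}$, and the $k$ commutators $\Phi^{j}f_j\Phi^{-j}=[\Phi^{j}a_j\Phi^{-j},\Phi^{j}b_j\Phi^{-j}]$ now sit in pairwise \emph{disjoint} shells, so they merge into the single commutator $[A,B]$ with $A=\prod_j\Phi^{j}a_j\Phi^{-j}$ and $B=\prod_j\Phi^{j}b_j\Phi^{-j}$, supported in $\widetilde{\phi({\rm Int}\,V)}\subset\widetilde{{\rm Int}\,W}$; no convergence issue arises. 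Your telescoping is applied to $f$ itself rather than to its commutator factorization, which is what forces you into the problematic fusion step. Separately, your reduction of (1) to (2) quietly requires a fragmentation argument to pass from perfectness over the ball $U$ to perfectness over the shell $O$; it is simpler to prove (1) directly by the same finite telescoping, choosing $\phi$ so that $\phi^{j}(K)$, $0\leq j\leq k$, are pairwise disjoint for a compact set $K$ carrying the supports of $f$ and of all the $a_i,b_i$.
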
  

\subsection{Relation between $\zeta$ and $clb_\pi$ in bundle diffeomorphism groups} \mbox{} 

Next we clarify relations between the conjugation-generated norm 
and the commutator length supported in balls in bundle diffeomorphism groups (cf. \cite{BIP, Ts3}). 

Suppose $\pi : M \to B$ is a fiber bundle with fiber $N$ and structure group $\Gamma$ and let $r \in \IZ_{\geq 0} \cup \{ \infty \}$. 
Below we assume that $B$ is a connected $n$-manifold.
Let ${\cal B}^r(B)$ denote the collection of $C^r$ $n$-balls in $B$.  
For $D \in {\cal B}^r(B)$ any commutator $[a,b]$ ($a, b \in {\rm Diff}^r_\pi(M; B_D)_0$) is called a commutator supported in $D$.   
Consider the subset ${\cal S}^r_\pi$ of ${\rm Diff}^r_\pi(M)_0$ defined by \ 
${\cal S}^r_\pi := \bigcup \,\big\{ {\rm Diff}^r_\pi(M; B_D)_0^c \mid D \in {\cal B}^r(B) \big\}$. \\
Since ${\cal S}^r_\pi$ is symmetric and conjugation-invariant in ${\rm Diff}^r_\pi(M)_0$, 
we obtain the corresponding ext.~conj.-invariant norm, which we call the commutator length supported in balls in $B$
and denote by the symbol \\ 
\hsp $clb_\pi : {\rm Diff}^r_\pi(M)_0 \to \IZ_{\geq 0} \cup \{ \infty \}$. 

For notational simplicity, 
let ${\cal G} = {\rm Diff}_\pi^r(M)_0$ and ${\cal G}_A := {\rm Diff}_\pi^r(M, B_A)_0$ for $A \subset B$. 
Note that $g({\cal G}_A)g^{-1} = {\cal G}_{\underline{g}(A)}$ \ for any $g \in {\cal G}$. 

\begin{fact}\label{lem_C_g^4}  $($cf.~\cite{BIP}, \cite[Lemma 3.1]{Ts2}, {etc}.$)$ \ \ 
Suppose $g \in {\cal G}$, $A \subset B$ and $\underline{g}(A) \cap A = \emptyset$. 
\benum 
\item[{\rm (1)}] 
$({\cal G}_A)^c \subset C_g^4$ \ in ${\cal G}$. 
More precisely, for any $a, b \in {\cal G}_A$ the following identity holds :  \\[0.5mm] 
\hspace*{10mm} $[a,b] =g\big(g^{-1}\big)^c g^{bc}\big(g^{-1}\big)^b$ \ in \ ${\cal G}$, \hsh 
where $c := a^{g^{-1}} \in {\cal G}_{\underline{g}^{-1}(A)}$. Note that $cb = bc$.  
\vskip 0.5mm 
\item[{\rm (2)}] $({\cal G}_{A'})^c \subset C_g^4$ \ in ${\cal G}$, if $A' \subset B$ and 
$\underline{h}(A') \subset A$ for some $h \in {\cal G}$.  
\eenum 
\end{fact}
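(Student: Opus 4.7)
The plan is to establish part (1) by giving the displayed algebraic identity and then verifying it as a direct calculation, after setting up the supports so that the commutation $bc=cb$ holds; part (2) follows from part (1) by conjugating by $h$ and using that $C_g^4$ is conjugation-invariant.

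First I would record the support bookkeeping. Since $a \in \mathcal{G}_A = {\rm Diff}^r_\pi(M;B_A)_0$, its support projects into (a compact subset of) ${\rm Int}_B A$, so the conjugate $c := a^{g^{-1}} = g^{-1}ag$ has support projecting into $\underline{g}^{-1}({\rm Int}_B A)$; thus $c \in \mathcal{G}_{\underline{g}^{-1}(A)}$, as asserted. The hypothesis $\underline{g}(A)\cap A=\emptyset$ gives $A \cap \underline{g}^{-1}(A)=\emptyset$, so the supports of $b\in\mathcal{G}_A$ and $c\in\mathcal{G}_{\underline{g}^{-1}(A)}$ project into disjoint subsets of $B$; in particular the two diffeomorphisms commute on $M$, so $bc=cb$.

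Next I would verify the identity $[a,b] = g\,(g^{-1})^c\,g^{bc}\,(g^{-1})^b$ in $\mathcal{G}$ by expanding the right-hand side using $x^y=yxy^{-1}$:
\[
g\cdot(cg^{-1}c^{-1})\cdot(bcg(bc)^{-1})\cdot(bg^{-1}b^{-1})
= gcg^{-1}c^{-1}\,bc\,gc^{-1}g^{-1}\,b^{-1}.
\]
Now $gcg^{-1}=g(g^{-1}ag)g^{-1}=a$ and likewise $gc^{-1}g^{-1}=a^{-1}$, while $bc=cb$ forces $c^{-1}bc=b$. Substituting yields $a\cdot(c^{-1}bc)\cdot a^{-1}\cdot b^{-1} = aba^{-1}b^{-1} = [a,b]$. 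Each of the four factors of the right-hand side is a conjugate of $g$ or $g^{-1}$, hence lies in $C_g$, so $[a,b]\in C_g^4$. Since the commutator subset $(\mathcal{G}_A)^c$ consists precisely of such commutators $[a,b]$ with $a,b\in\mathcal{G}_A$, this gives $(\mathcal{G}_A)^c\subset C_g^4$.

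For part (2), given $A'\subset B$ with $\underline{h}(A')\subset A$ for some $h\in\mathcal{G}$, I would use that conjugation by $h$ induces $h\mathcal{G}_{A'}h^{-1}=\mathcal{G}_{\underline{h}(A')}\subset \mathcal{G}_{A}$. Hence for any commutator $[a,b]\in(\mathcal{G}_{A'})^c$, the element $[hah^{-1},hbh^{-1}]=h[a,b]h^{-1}$ lies in $(\mathcal{G}_A)^c\subset C_g^4$ by part (1); applying the conjugation-invariance of $C_g$ (and hence of $C_g^4$) gives $[a,b]\in C_g^4$ as well.

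The only real subtlety, and thus the one point I would be careful about, is the support/commutation step: one must genuinely use $\underline{g}(A)\cap A=\emptyset$ to conclude that $b$ and $c$ commute in $\mathcal{G}$, since the identity $c^{-1}bc=b$ is what collapses the expansion to $[a,b]$. Everything else is formal manipulation in the group $\mathcal{G}$.
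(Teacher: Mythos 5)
Your proof is correct and follows essentially the same route as the paper: the displayed identity is verified by the straightforward expansion using $gcg^{-1}=a$ and the disjoint-support commutation $bc=cb$ (which is exactly where $\underline{g}(A)\cap A=\emptyset$ enters), and part (2) is a one-line conjugation argument. The only cosmetic difference is in (2): the paper replaces $g$ by $k:=h^{-1}gh$ and notes $\underline{k}(A')\cap A'=\emptyset$ and $C_k=C_g$, whereas you conjugate the commutator into $({\cal G}_A)^c$ and then use conjugation-invariance of $C_g^4$; these are trivially equivalent.
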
 

\begin{proof} (2) 
Let $k := h^{-1}gh \in {\cal G}$. Then, $\underline{k}(A') \cap A' = \emptyset$ and by (1) we have $({\cal G}_{A'})^c \subset C_{k}^4 = C_g^4$.  
\end{proof} 

\begin{lem}\label{lem_zeta_leq_4clb_pi} 
{\rm (1)} $\zeta_g \leq 4 \,clb_\pi$ in ${\rm Diff}_\pi^r(M)_0$ for any $g \in {\rm Diff}_\pi^r(M)_0 - {\rm Ker}\,P$. 
\benum
\item[{\rm (2)}] 
The group ${\rm Diff}_\pi^r(M)_0$ is uniformly simple relative to ${\rm Ker}\,P$ if $clb_\pi d\,{\rm Diff}_\pi^r(M)_0 < \infty$.  
\eenum
\end{lem}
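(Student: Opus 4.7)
The plan is to reduce both parts to Fact~\ref{lem_C_g^4}(2), in the spirit of \cite{BIP, Ts3}.

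For part (1), fix $g \in {\rm Diff}^r_\pi(M)_0 - {\rm Ker}\,P$, so that $\underline{g} \neq {\rm id}_B$. Pick $p \in B$ with $\underline{g}(p) \neq p$ and a nonempty open ball $W \subset B$ around $p$ small enough that $\underline{g}(\overline{W}) \cap \overline{W} = \emptyset$. The key sub-claim is: for every $D \in {\cal B}^r(B)$ there exists $h \in {\cal G} \equiv {\rm Diff}^r_\pi(M)_0$ with $\underline{h}(D) \subset W$. Granting this, Fact~\ref{lem_C_g^4}(2) applied with $A := W$ and $A' := D$ yields $({\cal G}_D)^c \subset C_g^4$ for every $D \in {\cal B}^r(B)$, so every element of ${\cal S}^r_\pi$ lies in $C_g^4$. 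For $f \in {\cal G}$ with $clb_\pi(f) = m < \infty$, writing $f = c_1 \cdots c_m$ with each $c_i \in {\cal S}^r_\pi$ exhibits $f$ as a product of $4m$ conjugates of $g^{\pm 1}$, whence $\zeta_g(f) \leq 4m = 4\,clb_\pi(f)$. The case $clb_\pi(f) = \infty$ is vacuous.

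To prove the sub-claim, fix a $C^r$ $n$-ball $D_0 \Subset W$. Since $B$ is connected, the standard ``disc theorem'' (any two $C^r$-embedded $n$-balls in a connected $n$-manifold are ambient isotopic through an isotopy in ${\rm Isot}^r(B)_0$) produces $G \in {\rm Isot}^r(B)_0$ with $G_1(D) = D_0$. Then Proposition~\ref{rel_iso_lift}, applied with $K = \emptyset$ so that the relative condition on $F'$ is vacuous, lifts $G$ to some $H \in {\rm Isot}^r_\pi(M)_0$ with $\underline{H} = G$. The element $h := H_1 \in {\cal G}$ satisfies $\underline{h} = G_1$, and therefore $\underline{h}(D) = D_0 \subset W$, as desired.

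Part (2) is then immediate: if $clb_\pi d\,{\cal G} =: m_0 < \infty$, part~(1) gives $\zeta_g(f) \leq 4\,clb_\pi(f) \leq 4 m_0$ for every $g \in {\cal G} - {\rm Ker}\,P$ and every $f \in {\cal G}$, hence $\zeta({\cal G}; {\rm Ker}\,P) \leq 4 m_0 < \infty$, which is exactly the required relative uniform simplicity. The only substantive obstacle is the sub-claim, which combines the ``disc theorem'' in $B$ (to move an arbitrary ball into the small window $W$) with the relative isotopy lifting property (Proposition~\ref{rel_iso_lift}) (to promote the resulting $B$-isotopy to a bundle isotopy of $M$). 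Both ingredients are already in the paper's toolkit, so the remainder is routine book-keeping.
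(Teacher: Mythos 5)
Your proposal is correct and follows essentially the same route as the paper: both reduce to showing $({\cal G}_D)^c \subset C_g^4$ for every ball $D$ via Fact~\ref{lem_C_g^4}(2), by choosing a small ball displaced by $\underline{g}$, moving an arbitrary ball into it with an ambient isotopy of the connected base, and lifting that isotopy to a bundle diffeomorphism (the paper simply invokes the surjectivity of $P$ where you invoke Proposition~\ref{rel_iso_lift} with $K=\emptyset$, which amounts to the same thing). The concluding counting argument for (1) and the deduction of (2) from Fact~\ref{fact_unif-simple_bounded}(2) match the paper's as well.
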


\begin{proof}
(1) Take any $g \in {\rm Diff}_\pi^r(M)_0 - {\rm Ker}\,P$. 

First we see that $({\cal G}_D)^c = {{\rm Diff}_\pi^r(M; B_D)_0^c} \subset C_g^4$ \ for any $D \in {\cal B}^r(B)$. 
In fact, since $\underline{g} \neq \id_B$, 
there exists $E \in {\cal B}^r(B)$ with $\underline{g}(E) \cap E = \emptyset$.
Since $B$ is connected, we can find $\eta \in {\rm Diff}^r(B)_0$ with $\eta(D) \subset E$. 
There exists $h \in {\cal G}$ with $\underline{h} = \eta$. 
Then, the assertion follows from Fact~\ref{lem_C_g^4}\,(2). 

To show that $\zeta_g \leq 4 \,clb_\pi$, given any $f \in {\cal G}$. If $clb_\pi(f) = \infty$, then the assertion is trivial. 
Suppose $k := clb_\pi(f) \in \IZ_{\geq 0}$. 
Then $f = f_1 \cdots f_k$ for some $D_i \in {\cal B}^r(B)$ and $f_i \in ({\cal G}_{D_i})^c$ $(i=1, \cdots, k)$. 
Since $f_i \in C_g^4$ and $\zeta_g(f_i) \leq 4$ $(i=1, \cdots, k)$, it follows that $\zeta_g(f) \leq 4k$.  
\end{proof}

\section{Boundedness of bundle diffeomorphism groups over a circle} 

\subsection{Generality on winding number} \mbox{} 

\noindent {{[\hspace{0.2mm}I\hspace{0.2mm}]}} For a bundle over a circle $S^1$ we can use the winding number on $S^1$ to extract some invariant of bundle diffeomorphisms over $S^1$. 

Let ${\cal P}(X) = C^0(I, X)$ denote the set of $C^0$ paths in a topological space $X$. 
As usual, for $a,b, c \in {\cal P}(X)$, 
\bit 
\itemI the inverse path $c_- \in {\cal P}(X)$ is defined by $c_-(t) = c(1-t)$ $(t \in I)$, 
\itemII if $a(1) = b(0)$, then the concatenation $a \ast b \in {\cal P}(X)$ is defined by 
$(a \ast b)(t) = 
\mbox{\small $\left\{ \hspace{-1mm} 
\bary[c]{l@{ \ }l}
a(2t) & (t \in \big[0,\frac{1}{2}\big]) \\[2mm]
b(2t-1) & (t \in \big[\frac{1}{2}, 1\big])
\eary \right.$}$, 
\vskip 1mm 
\itemiii $a \simeq_\ast b$ means that $a$ and $b$ are $C^0$ homotopic relative to ends, that is, 
there exists a $C^0$ homotopy $\eta : a \simeq b$ with 
$\eta_t(0) = a(0) = b(0)$ and $\eta_t(1) = a(1) = b(1)$. 
\eit 
\vskip 2mm 

Take a universal covering $\pi_{S^1} : \IR \to \IR/\IZ \cong S^1$ with $\IZ$ as the covering transformation group. 
The winding number $\lambda : {\cal P}(S^1) \to \IR$ is define as follows. 
For each $c \in {\cal P}(S^1)$ take any lift $\widetilde{c} \in {\cal P}(\IR)$ of $c$ (i.e., $\pi_{S^1}\,\widetilde{c} = c$) and 
define $\lambda(c) \in \IR$ by $\lambda(c) := \widetilde{c}(1) - \widetilde{c}(0)$. 
The value $\lambda(c)$ is independent of the choice of $\widetilde{c}$. 
For $f \in C^0(S^1) \equiv C^0(S^1, S^1)$, its degree is denoted by $\deg f$. 

\bfact\label{fact_lambda} The map $\lambda$ has the following properties. Suppose $a,b,c \in {\cal P}(S^1)$. 
\benum 
\item 
\bit 
\itemI $\lambda(c) = \deg c \in \IZ$ {if} $c$ is a closed loop in $S^1$ ($c(0) = c(1)$). 
\itemII $\lambda(fc) = (\deg f)\, \lambda(c)$ if $c$ is a closed loop and $f \in C^0(S^1)$. 
\eit 
\item 
\bit 
\itemI $\lambda(a \ast b) = \lambda(a) + \lambda(b)$ if $a(1) = b(0)$.   
\hsp (ii) \ $\lambda(c_-) = - \lambda(c)$.  
\eit 
\item 
\bit 
\itemI $\lambda(a) = \lambda(b)$ if $a \simeq_\ast b$.  

\itemII $\lambda(c \gamma) = \lambda(c)$ for any $\gamma \in {C^0(I) \equiv C^0(I, I)}$ with $\gamma(0) = 0$ and $\gamma(1) = 1$. 
\eit 
\item $|\lambda(fc) - \lambda(c)| < 1$ for any 
$f \in {\rm Diff}^0(S^1)_0$. \\
($|\lambda(fc) - \lambda(c)|$ can be arbitrarily large if $f$ moves within $C^0(S^1)$.) 
\eenum
\efact

\noindent {{[\hspace{0.2mm}I\hspace{-0.2mm}I\hspace{0.2mm}]}} 
Next we consider the winding number of a distinguished point under homotopies on $S^1$. 
Recall that the symbol $H^0(S^1)$ denotes the set of $C^0$ homotopies on $S^1$ (i.e.,  $C^0$ maps $G : S^1 \times I \to S^1$).  
For $g, h \in C^0(S^1)$ let $H^0(S^1)_{g,h} := \{ G \in H^0(S^1) \mid G_0 = g, G_1 = h\}$. 
As usual,  for $F, G, H \in H^0(S^1)$ we use the following notations. 
\bit 
\itemI 
\hspace*{-8mm} 
\begin{minipage}[t]{500pt} 
\baselineskip 6.5mm
\bit 
\itema The homotopy $F^- \in H^0(S^1)$ is defined by $(F^-)_t= F_{1-t}$ $(t \in I)$. 
\itemb The composition $HG \in H^0(S^1)$ is defined by $(HG)_t= H_tG_t$ $(t \in I)$. 
\itemc 
If $G_1 = H_0$, the concatenation $G \ast H \in H^0(S^1)$ is defined by 
\mbox{ \small $(G \ast H)_t = 
\left\{ \hspace{-1mm} 
\bary[c]{l@{ \ }l}
G_{2t} & (t \in \big[0,\frac{1}{2}\big]) \\[2mm]
H_{2t-1} & (t \in \big[\frac{1}{2}, 1\big])
\eary \right.$}. 
\eit 
\end{minipage}
\vskip 2mm 
\itemII $G \simeq_\ast H$ means that $G$ and $H$ are $C^0$ homotopic relative to ends (cf. \S3.1).
\itemiii If $\Phi : G \simeq G'$ and $\Psi : H \simeq H'$ in $H^0(S^1)$, then 
we have the composition $\Psi \Phi : HG \simeq H'G'$ defined by $(\Psi \Phi)_t = \Psi_t \Phi_t$ (or $(\Psi \Phi)_{s,t} = \Psi_{s,t} \Phi_{s,t}$). 
Note that $\Psi\Phi: HG \simeq_\ast H'G'$ if $\Phi : G \simeq_\ast G'$ and $\Psi : H \simeq_\ast H'$. 

\itemiv Any $\gamma \in C^0(I)$ induces a homotopy $G^\gamma \in H^0(S^1)$ defined by $(G^\gamma)_t = G_{\gamma(t)}$ $(t \in I)$. \\
\hspace*{-8mm} 
\begin{minipage}[t]{500pt} 
\baselineskip 6.5mm
\bit 
\itema If $\alpha, \beta \in C^0(I)$ and $\eta : \alpha \simeq \beta$, 
then we obtain a homotopy 
$G^\eta : G^\alpha \simeq G^\beta$ defined by \\
$(G^\eta)_t = G^{\eta_t}$ (or $(G^\eta)_{s,t} = G_{\eta(s,t)}$). 
Note that $G^\eta : G^\alpha \simeq_\ast G^\beta$ if $\eta : \alpha \simeq_\ast \beta$. 

\itemb If $\gamma \in C^0(I)$ and $\gamma(0) = 0$, $\gamma(1) = 1$, then we have a homotopy \\
$\xi : \id_I \simeq_\ast \gamma$ : $\xi(s,t) = {\xi_t(s)} = (1-t)s + t\gamma(s)$.  
\eit 
\end{minipage}
\eit 
\vskip 2mm 

Fix a point $p \in S^1$ and define \ \ $\mu \equiv \mu_p : H^0(S^1) \to \IR$ \ \ by $\mu(G) := \lambda(G_p)$, \\
where $G_p := G(p, \ast) \in {\cal P}(S^1)$. 
Note that $G_p(t) = G(p,t) = G_t(p)$ and $(G^\gamma)_p = G_p \gamma$ for any $\gamma \in C^0(I)$. 

\bfact\label{fact_mu} The map $\mu$ has the following properties. Suppose $F, G, H \in H^0(S^1)$.
\benum 
\item (i) \ $\mu(G \ast H) = \mu (G) + \mu(H)$ if $G_1 = H_0$. 
\hsp (ii) \ $\mu(F^-) = - \mu(F)$ 
\item $\mu(G) = \mu(H)$ if $G \simeq_\ast H$.  
\item $\mu(G^\gamma) = \mu(G)$ for any $\gamma \in {C^0(I)}$ with $\gamma(0) = 0$ and $\gamma(1) = 1$.
{
\item $\mu(GH) = \mu(G_0H) + \mu(GH_1) = \mu(GH_0) + \mu(G_1H)$. 
\item \bit 
\itemI $|\mu(fG) - \mu(G)| <1$ for any $f \in {\rm Diff}^0(S^1)_0$. 
\itemII $|\mu(GH) - \mu(G) - \mu(H)| < 1$ for any $G, H \in {\rm Isot}^0(S^1)_0$. 
\eit 
}
\eenum 
\efact 

{The statement (5)(ii) follows from the statements (4) and (5)(i). In fact, since $H_0 = \id_{S^1}$, we have \\
\hsp \hsh $|\mu(GH) - \mu(G) - \mu(H)| = |(\mu(GH_0) + \mu(G_1H)) - \mu(G) - \mu(H)| = |\mu(G_1H) - \mu(H)| < 1$.} 

The set $H^0(S^1)$ is a monoid with respect to the composition of homotopies and 
$H^0(S^1)_{\id, \id}$ is a submonoid of $H^0(S^1)$.
As restrictions of $\mu$ we obtain the following maps.

\bfact\label{fact_mu_homo} \mbox{}
\benum
\item The map $\mu : H^0(S^1)_{\id, \id} \to \IZ$ is a surjective monoid homomorphism. 
\item The map $\mu : {\rm Isot}^r(S^1)_{\id, \id} \to \IZ$ is a surjective group homomorphism for any $r \in \IZ_{\geq 0} \cup \{ \infty \}$. 
\eenum 
\efact 

\bfact\label{fact_mu=0} \mbox{} 
\benum
\item If $F \in {\rm Isot}^r(S^1)_{\id,\id}$ and $\mu(F) = 0$, then $F \simeq_\ast \id_{S^1 \times I}$ {(a $C^r$ isotopy rel ends)}. 
\item The map $\mu$ induces a group isomorphism $\widetilde{\mu} : {\rm Isot}^r(S^1)_{\id,\id}\big/\simeq_\ast \ \cong \ \IZ$. 
\eenum 
\efact 

The assertion (1) follows from Lemma~\ref{lem_sdr} below, which reviews the basic ``topological property'' of the group ${\rm Diff}^r(S^1)_0$. 
We identify $S^1$ with the unit circle in the complex plane $\IC$ so that the distinguished point $p = 1$. 
Then, {${\rm Diff}^r(S^1)_0$} includes the rotation group $SO(2)$ canonically.
For $q_1, q_2 \in S^1$ let $\theta_{q_1, q_2} \in SO(2)$ denote the rotation of $S^1$ which maps $q_1$ to $q_2$. 
We set ${\cal D}^r_{q_1, q_2} := \{ f \in {\rm Diff}^r(S^1)_0 \mid f(q_1) = q_2 \}$. Then ${\cal D}^r_{q_1, q_2} \cap SO(2) = \{ \theta_{q_1, q_2} \}$. \\
Recall the following conventions for $C^\infty$ manifolds $M$, $N$ 
and a subset ${\cal S} \subset C^0(M, N)$. 
\bit 
\itemI For a $C^\infty$ manifold $L$, a map $\alpha : L \to C^0(M, N)$ is $C^r$ 
if the associated map $\widetilde{\alpha} : L \times M \to N$, $\widetilde{\alpha}(u,x) = \alpha(u)(x)$ is $C^r$.  
\itemII A map $\phi : {\cal S} \times I \to {\cal S}$ is $C^r$ if 
for any $C^\infty$ manifold $L$ and any $C^r$ map $\alpha : L \to {\cal S}$ the composition $\phi(\alpha \times \id_I) : L \times I \to {\cal S}$ is $C^r$.  

\itemiii Suppose ${\cal S}$ admits a left action of a group $\Theta$ and ${\cal T}$ is a ($\Theta$-invariant) subset of ${\cal S}$. 
A $C^r$ $\Theta$-equivariant strong deformation retraction (SDR) of ${\cal S}$ onto ${\cal T}$ means 
a $C^r$ map $\phi : {\cal S} \times I \to {\cal S}$ such that $\phi_0 = \id_{\cal S}$, $\phi_1({\cal S}) = {\cal T}$, $\phi_t = \id$ on ${\cal T}$ $(t \in I)$ and 
$\phi_t(\theta f) = \theta \phi_t(f)$ $(f \in {\cal S}, \theta \in \Theta, t \in I)$. 
\eit 

\blem\label{lem_sdr} \mbox{} 
There exists a canonical $SO(2)$-equivariant SDR $\phi$ 
of ${\rm Diff}^0(S^1)_0$ onto $SO(2)$ such that
$\phi_t({\cal D}^0_{p,q}) \subset {\cal D}^0_{p,q}$ $(q \in S^1, t \in I)$. Furthermore, for any $r \in \IZ_{\geq 0} \cup \{ \infty \}$
the map $\phi$ restricts to a $C^r$ $SO(2)$-equivariant SDR 
of ${\rm Diff}^r(S^1)_0$ onto $SO(2)$ such that
$\phi_t({\cal D}^r_{p,q}) \subset {\cal D}^r_{p,q}$ $(q \in S^1, t \in I)$.
The additional condition on $\phi$ means that 
$\phi$ induces a $C^r$ SDR 
of ${\cal D}^r_{p,q}$ onto the singleton $\{ \theta_{p,q} \}$. 
\elem 

\bpf \mbox{} 
\benum 
\item First we construct 
a canonical SDR $\chi$ of ${\cal D}^0_{p,p}$ onto $\{ \id_{S^1} \}$ such that 
$\chi$ restricts to a $C^r$ SDR of ${\cal D}^r_{p,p}$ onto $\{ \id_{S^1} \}$ for any $r \in \IZ_{\geq 0} \cup \{ \infty \}$. 
We identify the universal cover $\pi_{S^1} : \IR \to S^1$ with the exponential map $\pi_{S^1}(s) = e^{2\pi i s}$. 
Then the map $\chi$ is defined by the linear isotopy \\
\hspp $\chi_t(f)(\pi_{S^1}(s)) = \pi_{S^1}((1-t)\widetilde{f}(s) + ts)$ \ \ $(f \in {\cal D}^0_{p,p}, s \in \IR, t \in I)$, \\
where $\widetilde{f} \in {\rm Diff}^0(\IR)$ is the lift of $f$ with $\widetilde{f}(n) = n$ $(n \in \IZ)$. 

\item The map $\phi$ is defined by $\phi_t(f) = \theta_{p,f(p)} \chi_t(\theta_{f(p), p}f)$ $(f \in {\rm Diff}^0(S^1)_0, t \in I)$. 
 \eenum 
\vskip -7mm 
\epf 

\vskip 4mm 

\noindent {{[\hspace{0.2mm}I\hspace{-0.2mm}I\hspace{-0.2mm}I\hspace{0.2mm}]}} 
Finally we consider the winding number of a distinguished fiber under bundle homotopies over $S^1$. 
Suppose $\pi : M \to S^1$ is {a fiber bundle with fiber $N$ and structure group $\Gamma$.} 
Recall that the symbol $H^0_\pi(M)$ denotes the set of $C^0$ homotopies on $\pi$ (i.e., 
$C^0$ homotopies $F : M \times I \to M$ such that $F_t \in C^0_\pi(M) \equiv C^0(\pi, \pi)$ $(t \in I)$). 
Recall that, ${\cal E}_{g,h} := \{ G \in {\cal E} \mid G_0 = g, G_1 = h\}$ 
for ${\cal E} \subset H^0_\pi(M)$ and $g, h \in C^0_\pi(M)$.  
As before,  for $F, G, H \in H^0_\pi(M)$ we use the following notations. 

\bit 
\itemI 
\hspace*{-8mm} 
\begin{minipage}[t]{500pt} 
\baselineskip 6.5mm
\bit 
\itema The homotopy $F^- \in H^0_\pi(M)$ is defined by $(F^-)_t= F_{1-t}$ $(t \in I)$. 
\itemb The composition $HG \in H^0_\pi(M)$ is defined by $(HG)_t= H_tG_t$ $(t \in I)$. 
\itemc 
If $G_1 = H_0$, the concatenation $G \ast H \in {H}^0_\pi(M)$  is defined by 
\mbox{ \small $(G \ast H)_t = 
\left\{ \hspace{-1mm} 
\bary[c]{l@{ \ }l}
G_{2t} & (t \in \big[0,\frac{1}{2}\big]) \\[2mm]
H_{2t-1} & (t \in \big[\frac{1}{2}, 1\big])
\eary \right.$}. 
\eit 
\end{minipage}
\vskip 2mm 
\itemII $G \simeq_\ast H$ means that $G$ and $H$ are $C^0$ homotopic relative to ends (cf. \S3.2).
\itemiii If $\Phi : G \simeq G'$ and $\Psi : H \simeq H'$ in ${H}^0_\pi(M)$, then 
we have the composition $\Psi \Phi : HG \simeq H'G'$ defined by $(\Psi \Phi)_t = \Psi_t \Phi_t$ (or $(\Psi \Phi)_{s,t} = \Psi_{s,t} \Phi_{s,t}$). 
Note that $\Psi\Phi: HG \simeq_\ast H'G'$ if $\Phi : G \simeq_\ast G'$ and $\Psi : H \simeq_\ast H'$. 

\itemiv Any $\gamma \in {C^0(I)}$ induces a homotopy $G^\gamma \in {H}^0_\pi(M)$ defined by $(G^\gamma)_t = G_{\gamma(t)}$ $(t \in I)$. \\
\hspace*{-8mm} 
\begin{minipage}[t]{500pt} 
\baselineskip 6.5mm
\bit 
\itema If $\alpha, \beta \in {C^0(I)}$ and $\Lambda : \alpha \simeq \beta$, 
then we obtain a homotopy 
$G^\Lambda : G^\alpha \simeq G^\beta$ defined by \\
$(G^\Lambda)_t = G^{\Lambda_t}$ (or $(G^\Lambda)_{s,t} = G_{\Lambda(s,t)}$). 
Note that $G^\Lambda : G^\alpha \simeq_\ast G^\beta$ if $\Lambda : \alpha \simeq_\ast \beta$. 

\itemb $G^\gamma \simeq_\ast G$ if $\gamma \in {C^0(I)}$ and $\gamma(0) = 0$, $\gamma(1) = 1$. 
\eit 
\end{minipage}
\vskip 2mm 
\eit 
\vskip 2mm 
\bremb\label{rem_underline}
\benum 
\item (i) \ $\underline{G^-} = \underline{G}^-$, $\underline{HG} = \underline{H}\,\underline{G}$, \hsp
(ii) \ $\underline{G \ast H} = \underline{G} \ast \underline{H}$ if $G, H \in H^0_\pi(M)$ and $G_1 = H_0$. 
\item If $\Phi : G \simeq_\ast H$, then $\underline{\Phi} : \underline{G} \simeq_\ast \underline{H}$
\eenum
\erem

Consider the map \ \ $\nu : H^0_\pi(M) \to \IR$, $\nu(G) := \mu(\underline{G})$. \\
The following conclusion follows directly from Fact \ref{fact_mu} and {Remark}~\ref{rem_underline}.

\bfact\label{fact_nu} The map $\nu$ has the following properties. Suppose $F, G, H \in H^0_\pi(M)$.
\benum
\item (i) \ $\nu(G \ast H) = \nu (G) + \nu(H)$ if $G_1 = H_0$. \hsp (ii) \ $\nu(F^-) = - \nu(F)$. 

\item $\nu(G) = \nu(H)$ if $G \simeq_\ast H$ 
\item $\nu(G^\gamma) = \nu(G)$ if $\gamma \in {C^0(I)}$ and $\gamma(0) = 0$, $\gamma(1) = 1$. 
\item $\nu(GH) = \nu(G_0H) + \nu(GH_1) = \nu(GH_0) + \nu(G_1H)$. 
\item \bit 
\itemI $|\nu(fG) - \nu(G)| < 1$ for any $f \in {\rm Diff}^0_{\pi}(M)_0$. 
\itemII $|\nu(GH) - \nu(G) - \nu(H)| < 1$ for any $G, H \in {\rm Isot}^0_\pi(M)_0$. 
\eit 
\eenum
\efact 


The set $H^0_\pi(M)$ is a monoid with respect to the composition of homotopies and 
the map \\
$P_H : H^0_\pi(M) \to H^0(S^1)$, $P_H(G) = \underline{G}$, is a monoid homomorphism. 
Since $H^0(S^1)_{\id, \id}$ is a submonoid of $H^0(S^1)$, the inverse image 
$P_H^{\ -1}(H^0(S^1)_{\id, \id})$ is a submonoid of $H^0_\pi(M)$.
Since $\nu = \mu P_H$, Fact~\ref{fact_mu_homo} and Fact~\ref{fact_nu}\,(5)(ii) {imply} the following conclusion. 

\bfact\label{fact_nu_homo_pre} \mbox{} 
\benum
\item The map $\nu$ restricts to a surjective monoid homomorphism $\nu : P_H^{\ -1}(H^0(S^1)_{\id, \id}) \to \IZ$.  
\item The map $\nu : {\rm Isot}^0_\pi(M)_0 \to \IR$ is a quasimorphism, which satisfies the estimate : \\
\hspp $|\nu(GH) - \nu(G) - \nu(H)| < 1$ for any $G, H \in {\rm Isot}^0_\pi(M)_0$. 
\eenum 
\efact 

\subsection{The map $\widehat{\nu} : {\rm Diff}^r_\pi(M)_0 \to \IR_k$} \mbox{} 

Suppose $\pi : M \to {S^1}$ is a fiber bundle with fiber $N$ and structure group $\Gamma < {\rm Diff}(N)$. 
Let $r \in \IZ_{\geq 0} \cup \{ \infty \}$. 
We are concerned with the following surjective group homomorphisms.
\bit 
\itemI $P_I : {\rm Isot}^r_\pi(M)_0 \to {\rm Isot}^r(S^1)_0$, $P_I(F) = \underline{F}$ \hsp 
(ii) \ $P : {\rm Diff}^r_\pi(M)_0 \to {\rm Diff}^r(S^1)_0$, $P(f) = \underline{f}$
\eit 
Their surjectivity follows from Isotopy lifting property of $\pi$. 
For notational simplicity we set \\ 
${\cal I} \equiv P_I^{\,-1}({\rm Isot}^r(S^1)_{\id, \id})$. 


\bfact\label{fact_nu_homo} \mbox{} 
\benum 
\item 
The map $\nu$ restricts to the following maps. 
\bit 
\itemI a surjective map $\nu : {\rm Isot}^r_\pi(M)_0 \to \IR$
\itemII a surjective group homomorphism $\nu_{\cal I} : {\cal I} \to \IZ$. 
\eit 

\item For any $f \in {\rm Diff}^r_\pi(M)_0$ and $F \in {\rm Isot}^r_\pi(M)_{\id, f}$ \\ 
\hsh ${\rm Isot}^r_\pi(M)_{\id, f} = F\,{\rm Isot}^r_\pi(M)_{\id, \id}$ \ \ and \ \ $\nu({\rm Isot}^r_\pi(M)_{\id, f}) = \nu(F) + \nu({\rm Isot}^r_\pi(M)_{\id, \id})$ . 
\eenum 
\efact 

The relations among $\nu$, $\mu$ and $P_I$ are illustrated in the following diagrams : \\
\hspace*{10mm} 
{\small $\bary[t]{c@{ \ \ }c@{ \ \ }cl}
{\rm Isot}^r_\pi(M)_0 & \vartriangleright & {\cal I} & \\[0.5mm] 
\rotatebox{90}{$\lla\hspace{-5mm} \lla$} & \hspace{-31mm} \smash{\raisebox{2mm}{$P_I$}} & \rotatebox{90}{$\lla\hspace{-5mm} \lla$}
& \hspace{-12mm} \smash{\raisebox{2mm}{$P_I$}} \\
{\rm Isot}^r(S^1)_0 & \vartriangleright & {\rm Isot}^r(S^1)_{\id, \id} & 
\eary$ 
\hspace{10mm} 
$\bary[t]{cccl}
{\rm Isot}^r_\pi(M)_0 & & \hspace*{-15mm} \smash{\raisebox{-3mm}{$\nu$}} \\[0.5mm] 
\rotatebox{90}{$\lla\hspace{-5mm} \lla$} & \hspace{-40mm} \smash{\raisebox{2mm}{$P_I$}} & 
\hspace{-19.8mm} \smash{\raisebox{8mm}{\rotatebox{-35}{\makebox(40,0){\rightarrowfill}}}} & 
\hspace{-20.5mm} \smash{\raisebox{8mm}{\rotatebox{-35}{\makebox(38,0){\rightarrowfill}}}} \\[-2.5mm] 
& \hspace*{-4.5mm} \mu & & \\[-2mm] 
{\rm Isot}^r(S^1)_0 & \hspace{-2.5mm} \makebox(30,0){\rightarrowfill} & \IR & \hspace{-20mm} \makebox(27,0){\rightarrowfill}
\eary$ 
\hspace*{5mm} 
$\bary[t]{ccll}
{\cal I} & & & \hspace*{-17mm} \smash{\raisebox{-3mm}{$\nu_{\cal I}$}} \\[0.5mm] 
\rotatebox{90}{$\lla\hspace{-5mm} \lla$} & \hspace{-44mm} \smash{\raisebox{2mm}{$P_I$}} & 
\hspace{-22mm} \smash{\raisebox{7mm}{\rotatebox{-22}{\makebox(60,0){\rightarrowfill}}}} & 
 \hspace{-27.7mm} \smash{\raisebox{7mm}{\rotatebox{-22}{\makebox(56.5,0){\rightarrowfill}}}} \\[-2.5mm] 
& \hspace*{-6mm} \mu & & \\[-2mm] 
{\rm Isot}^r(S^1)_{\id, \id} & \hspace{-2mm} \makebox(30,0){\rightarrowfill} & \IZ & 
\hspace{-20mm} \makebox(27.5,0){\rightarrowfill}
\eary$}
\vskip 4mm 

Consider the surjective group homomorphism \hsh $R : {\rm Isot}^r_\pi(M)_0 \to {\rm Diff}^r_\pi(M)_0$, $R(F) = F_1$. \\
We have the normal subgroups \\
\hspp (i) ${\rm Ker}\,P_I \, \vartriangleleft \, {\rm Isot}^r_\pi(M)_0$, \hsp 
(ii) ${\rm Ker}\,P, \ ({\rm Ker}\,P)_0 := R({\rm Ker}\,P_I) \, \vartriangleleft \, {\rm Diff}^r_\pi(M)_0$. 

\bfact\label{fact_Ker} \mbox{} 
\benum 
\item ${\rm Ker}\,R = {\rm Isot}^r_\pi(M)_{\id, \id}$, \ \ ${\cal I} = R^{-1}({\rm Ker}\,P)$. \hsp 
{ 
(2) ${\rm Ker}\,\nu_{\cal I} = ({\rm Ker}\,P_I)\cdot ({\rm Ker}\,\nu_{\cal I})_{\id, \id}$. 
\item[(3)] $R^{-1}(({\rm Ker}\,P)_0) = ({\rm Ker}\,P_I)\cdot {\rm Isot}^r_\pi(M)_{\id, \id} = ({\rm Ker}\,\nu_{\cal I}) \cdot {\rm Isot}^r_\pi(M)_{\id, \id}$.  
}
\eenum
\efact 
\vspace*{-2mm} 
{\small 
$$\xymatrix@
R=15pt@L=5pt@C=15pt@M+2pt
{\relax
0 \ar[r] &  {\rm Isot}^r_\pi(M)_{\id, \id} \ar@{}[r]|*{\rotatebox{90}{$\bigcap$}} \ar@{}[d]|*{\rotatebox{0}{$||$}}  & 
{\rm Isot}^r_\pi(M)_0 \ar[r]^R \ar@{}[d]|*{\rotatebox{0}{$\bigtriangledown$}} & 
{\rm Diff}^r_\pi(M)_0 \ar@{}[d]|*{\rotatebox{0}{$\bigtriangledown$}} \ar[r] & 0 \\ 
0 \ar[r] & {\rm Isot}^r_\pi(M)_{\id, \id} \ar@{}[r]|*{\hspace*{6mm} \rotatebox{90}{$\bigcap$}} \ar@{}[d]|*{\rotatebox{0}{$\bigtriangledown$}} & 
{\cal I} \ar[r]^-R \ar@{}[d]|*{\rotatebox{0}{$\bigtriangledown$}} & 
{\rm Ker}\,P \ar[r] \ar@{}[d]|*{\rotatebox{0}{$\bigtriangledown$}} & 0 \\
0 \ar[r] & 
({\rm Ker}\,\nu_{\cal I})_{\id, \id} \ar@{}[r]|*{\hspace*{6mm} \rotatebox{90}{$\bigcap$}} & 
{\rm Ker}\,\nu_{\cal I} \ar[r]^-R & ({\rm Ker}\,P)_0 \ar[r] & 0 
}$$ 
}
\vspace*{0mm} 
{\begin{proof} 
(2) If $F \in {\rm Ker}\,\nu_{\cal I}$ , then $\mu(\underline{F}) = \nu_{\cal I}(F) = 0$ and 
Fact~\ref{fact_mu=0} (1) yields a $C^r$ isotopy of isotopies $\Psi 
: \underline{F} \simeq_\ast \id_{S^1 \times I}$ rel ends.
Then we can apply Corollary~\ref{cor 2 rel_iso_lift} to obtain 
a $C^r$ isotopy of bundle isotopies $\Phi 
: F \simeq_\ast G$ rel ends with $\Phi_t \in {\rm Isot}^r_{\pi}(M)$ $(t \in I)$ and 
$\underline{\Phi} = \Psi$. Since $\underline{G} = \id_{S^1 \times I}$ and  $G_0 = F_0 = \id_M$, $G_1 = F_1$, \break it follows that 
$G \in {\rm Ker}\,P_I$ and $H:= G^{-1}F \in ({\rm Ker}\,\nu_{\cal I})_{\id, \id}$ so that 
$F = GH \in ({\rm Ker}\,P_I)\cdot ({\rm Ker}\,\nu_{\cal I})_{\id, \id}$. 

Since ${\rm Ker}\,P_I \subset {\rm Ker}\,\nu_{\cal I}$, the converse implication is obvious. 


(3) $\bary[t]{l}
R^{-1}(({\rm Ker}\,P)_0) = R^{-1}(R({\rm Ker}\,P_I)) = ({\rm Ker}\,P_I) \cdot {\rm Ker}\,R = ({\rm Ker}\,P_I)\cdot {\rm Isot}^r_\pi(M)_{\id, \id}. \\[2mm]
({\rm Ker}\,\nu_{\cal I}) \cdot {\rm Isot}^r_\pi(M)_{\id, \id}
= ({\rm Ker}\,P_I)\cdot ({\rm Ker}\,\nu_{\cal I})_{\id, \id} \cdot {\rm Isot}^r_\pi(M)_{\id, \id}
= ({\rm Ker}\,P_I)\cdot {\rm Isot}^r_\pi(M)_{\id, \id}. 
\eary$ 
\vskip -3.5mm 
\end{proof}
}
Hence we have the group isomorphisms : \\
\hspp $\widetilde{R} : {\rm Isot}^r_\pi(M)_0/{\rm Isot}^r_\pi(M)_{\id,\id} \cong {\rm Diff}^r_\pi(M)_0$ \ \ and \ \ 
$\widetilde{R} : {\cal I}/{\rm Isot}^r_\pi(M)_{\id,\id} \cong {\rm Ker}\,P$. 

Recall the surjective group homomorphism $\nu_{\cal I} : {\cal I} \to \IZ$, which maps 
${\rm Isot}^r_\pi(M)_{\id, \id} \vartriangleleft {\cal I}$ onto the image $\nu_{\cal I}\big({\rm Isot}^r_\pi(M)_{\id, \id}\big) < \IZ$. 
It follows that $\nu_{\cal I}\big({\rm Isot}^r_\pi(M)_{\id, \id}\big) = k \IZ$ \ for a unique $k = k(\pi, r) \in \IZ_{\geq 0}$. 

For any $H \in {\rm Isot}^r_\pi(M)_0$ and $G \in {\rm Isot}^r_\pi(M)_{\id,\id}$ 
from Fact~\ref{fact_nu}\,(3) it follows that \\
\hspp $\nu(HG) = \nu(H_0G) + \nu(HG_1) = \nu(H) + \nu(G)$.  \\
Therefore, if we consider the right actions of ${\rm Isot}^r_\pi(M)_{\id,\id}$ on ${\rm Isot}^r_\pi(M)_0$ and $k\IZ$ on $\IR$ by right translation, 
then $\nu : {\rm Isot}^r_\pi(M)_0 \to \IR$ is a surjective equivariant map with respect to the homomorphism $\nu : {\rm Isot}^r_\pi(M)_{\id, \id} \to k\IZ$,  
and so, it induces the associated map \\
\hspp $\widetilde{\nu} : {\rm Isot}^r_\pi(M)_0/{\rm Isot}^r_\pi(M)_{\id,\id} \to \IR_k \equiv \IR/k\IZ$, $\widetilde{\nu}([H]) = [\nu(H)]$ \\
(cf.~Fact~\ref{fact_nu_homo}\,(2)). 
By Fact~\ref{fact_nu_homo}\,(1)(ii) 
it restricts to a surjective group homomorphism \\
\hspp $\widetilde{\nu} : {\cal I}/{\rm Isot}^r_\pi(M)_{\id,\id} \to \IZ_k \equiv \IZ/k\IZ$.  

\bdefnb From these arguments we obtain 
\bit 
\itemI a surjective map \hsf $\widehat{\nu} := \widetilde{\nu} \widetilde{R}^{-1} : {\rm Diff}^r_\pi(M)_0 \lra \IR_k$, \ \ $\widehat{\nu}(H_1) = [\nu(H)]$ \ 
$(H \in {\rm Isot}^r_\pi(M)_0)$, 
\itemII  
a surjective group homomorphism \hsf $\widehat{\nu}|_{{\rm Ker}\,P} : {\rm Ker}\,P \to \IZ_k$. 
\eit 
\edefn 

{Fact~\ref{fact_Ker} implies the following.} 

\bfact\label{lem_Ker_P} \mbox{} 
\benum
\item 
$\widehat{\nu}|_{{\rm Ker}\,P} : {\rm Ker}\,P \to \IZ_k$ has the following properties. 
\bit 
\itemI ${\rm Ker}\,\widehat{\nu}|_{{\rm Ker}\,P} = ({\rm Ker}\,P)_0$. 
\itemII It induces the group isomorphism $(\widehat{\nu}|_{{\rm Ker}\,P})^\sim : ({\rm Ker}\,P)\big/({\rm Ker}\,P)_0 \cong \IZ_k$. 
\eit 
\item The quotient group $({\rm Ker}\,P)\big/({\rm Ker}\,P)_0$ is a cyclic group. 
Its order is $k$ when $k \geq 1$ and $\infty$ when $k = 0$. 

\eenum
\efact 

In Subsection 4.4 we study the cases $k=0$ and $k \geq 1$ separately, since these two cases lead to opposite consequences. 
Subsection 4.3 includes some basic results on factorization of bundle diffeomorphisms over $S^1$. 

\subsection{Factorization of bundle diffeomorphisms over $S^1$} \mbox{} 

In this {subsection} we obtain basic results on factorization of bundle diffeomorphisms and isotopies over $S^1$. 

Suppose $\pi : M \to S^1$ is a fiber bundle with fiber $N$ and structure group $\Gamma < {\rm Diff}(N)$. 
Let $r \in \IZ_{\geq 0} \cup \{ \infty \}$. 
We use the following notations. 
For a closed arc 
$J \subset S^1$ let $J' := (S^1)_J = S^1 - {\rm Int}\,J$ 
and $\widetilde{J} := \pi^{-1}(J)$. 
Also recall the definition of the notation ${{\rm Diff}_\pi^r(M; (S^1)_J)_0}$ in {Subsection 3.3}; 
$G \in {{\rm Isot}_\pi^r(M; (S^1)_J)_0}$ means that $G \in {\rm Isot}_\pi^r(M)_0$ and 
$G$ has its support in $\widetilde{K}$ for some closed arc $K \Subset J$ 
and $g \in {{\rm Diff}_\pi^r(M; (S^1)_J)_0}$ means that $g = G_1$ for some $G \in {{\rm Isot}_\pi^r(M; (S^1)_J)_0}$. 
Note that  $g\big({{\rm Diff}_\pi^r(M; (S^1)_J})_0\big)g^{-1} = {{\rm Diff}_\pi^r(M; (S^1)_{\underline{g}(J)})_0}$ 
for any $g \in {\rm Diff}_\pi^r(M)$. 
\vskip 2mm 

\blem\label{lem_fac_iso} Suppose $K \Subset J \subset S^1$ are closed arcs and 
$F \in {\rm Isot}_\pi^r(M)_0$ satisfies $\underline{F}(K \times I) \Subset J$.  
Then there exists a factorization $F = GH$ for some 
$G \in {{\rm Isot}_\pi^r(M; (S^1)_J)_0}$ and 
$H \in {{\rm Isot}_\pi^r(M; (S^1)_{K'})_0}$. 
\elem

\begin{proof}
Take a closed arc $L \Subset J$ such that $K \Subset L$ and $\underline{F}(L \times I) \Subset J$. 
There exists $G' \in {{\rm Isot}^r(S^1; J')_0}$ such that $G' = \underline{F}$ on $L \times I$. 
Take a lift $G \in {{\rm Isot}_\pi^r(M; (S^1)_J)_0}$ such that 
$G = F$ on $\widetilde{L} \times I$ and $\underline{G} = G'$. 
Then, it follows that $F = GH$ for $H := G^{-1}F \in {{\rm Isot}_\pi^r(M; (S^1)_{K'})_0}$. 
\end{proof} 

\begin{lem}\label{lem_fac_c} 
Suppose $K, L \subset S^1$ are closed arcs with ${\rm Int}\,K \cap {\rm Int}\,L \neq \emptyset$. 
\benum 
\item[{\rm (1)}] Any $g \in {{\rm Diff}^r_\pi(M; (S^1)_{{K}})_0}$ admits a factorization 
$g = \widehat{g}\,h$ for some $\widehat{g} \in {{\rm Diff}^r_\pi(M; (S^1)_{{K}})_0^c}$ and 
$h \in {{\rm Diff}^r_\pi(M; (S^1)_{{L}})_0}$. 

\item[{\rm (2)}] For any $g \in {{\rm Diff}^r_\pi(M; (S^1)_{{K}})_0}$ and $h \in {{\rm Diff}^r_\pi(M; (S^1)_{{L}})_0}$ 
there exists a factorizaiton $gh = \widehat{g}\,\widehat{h}$ for some $\widehat{g} \in {{\rm Diff}^r_\pi(M; (S^1)_{{K}})_0^c}$ and 
$\widehat{h} \in {{\rm Diff}^r_\pi(M; (S^1)_{{L}})_0}$. 
\eenum 
\end{lem}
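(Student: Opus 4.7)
My plan is to derive (2) from (1) by simple algebra, and to prove (1) by a displacement--commutator construction adapted to the bundle setting. For (2): applying (1) to $g$ yields $g = \widehat{g}\,h_1$ with $\widehat{g} \in {\rm Diff}^r_\pi(M, (S^1)_K)_0^c$ and $h_1 \in {\rm Diff}^r_\pi(M, (S^1)_L)_0$; since the latter is closed under composition, $\widehat{h} := h_1 h$ still lies in it and $gh = \widehat{g}\,\widehat{h}$ is the required factorization.

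For (1), let $K_0 \Subset K$ be a closed arc over which $g$ is supported. The hypothesis ${\rm Int}\,K \cap {\rm Int}\,L \neq \emptyset$ lets me pick a small closed arc $K_1$ with $K_1 \Subset {\rm Int}\,K \cap {\rm Int}\,L$. The key step is to produce a diffeomorphism $\phi \in {\rm Diff}^r(S^1, (S^1)_K)_0$ --- one supported in $K$ and isotopic to $\id_{S^1}$ --- such that $\phi^{-1}(K_0) \Subset K_1$. Identifying $K$ with $[0,1]$, this reduces to the elementary task of constructing a $C^r$ diffeomorphism of $[0,1]$, identity near the endpoints, that stretches $K_1$ over $K_0$; no length obstruction arises since one is free to distort subintervals arbitrarily, and any such $\phi$ is automatically isotopic to $\id_{S^1}$. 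I then lift $\phi$ to some $\widehat{\phi} \in {\rm Diff}^r_\pi(M, (S^1)_K)_0$ via the isotopy lifting property (Proposition~\ref{rel_iso_lift} with empty partial data).

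Finally, I set $h := \widehat{\phi}^{-1} g \widehat{\phi}$ and $\widehat{g} := g h^{-1} = g \widehat{\phi}^{-1} g^{-1} \widehat{\phi} = [g, \widehat{\phi}^{-1}]$. The support of $h$ lies over $\phi^{-1}(K_0) \Subset K_1 \Subset L$, so $h \in {\rm Diff}^r_\pi(M, (S^1)_L)_0$; meanwhile $\widehat{g}$ is patently a commutator of two elements of the group ${\rm Diff}^r_\pi(M, (S^1)_K)_0$, hence lies in ${\rm Diff}^r_\pi(M, (S^1)_K)_0^c$; and $g = \widehat{g}\,h$ is tautological. The only step that requires real care is the existence of $\phi$ with the prescribed disjointing property, but once $K$ is identified with an interval this is a routine stretching construction; everything else --- the lift, the group closure, and the commutator identity --- is immediate from the framework already set up in Sections~3 and 4.
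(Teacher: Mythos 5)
Your proof is correct and follows essentially the same route as the paper's: the paper likewise conjugates $g$ by a bundle diffeomorphism $\phi$ supported over $K$ that carries the support of $g$ into an arc $E\subset{\rm Int}\,K\cap{\rm Int}\,L$, and writes $g=[g,\phi]\cdot(\phi g\phi^{-1})$, which is your identity up to replacing $\phi$ by $\widehat{\phi}^{\,-1}$; part (2) is deduced from (1) in the same one-line way. One small correction: lifting the base diffeomorphism via Proposition~\ref{rel_iso_lift} with \emph{empty} partial data only yields $\widehat{\phi}\in{\rm Diff}^r_\pi(M)_0$, whereas you need $\widehat{\phi}\in{\rm Diff}^r_\pi(M,(S^1)_K)_0$ for $[g,\widehat{\phi}^{\,-1}]$ to lie in ${\rm Diff}^r_\pi(M,(S^1)_K)_0^c$; so you should apply the relative lifting with the identity prescribed over a neighborhood of $S^1-{\rm Int}\,K$ (on which the base isotopy is already the identity), which forces the lift to be supported over $K$.
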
 

\begin{proof} \mbox{} 
\benum
\item Take $G \in {{\rm Isot}^r_\pi(M; (S^1)_{{K}})_0}$ with $G_1 = g$. 
There exists a closed arc $D \Subset K$ with $G \in {{\rm Isot}^r_\pi(M; (S^1)_{{D}})_0}$. 
Take a closed arc $E \subset {\rm Int}\,K \cap {\rm Int}\,L$ and an isotopy 
$\Phi \in {{\rm Isot}^r_\pi(M; (S^1)_{{K}})_0}$ with $\Phi_1(\widetilde{D}) = \widetilde{E}$. 
Let $\phi := \Phi_1 \in {{\rm Diff}^r_\pi(M; (S^1)_{{K}})_0}$ 
and $h:=\phi g \phi^{-1} \in {{\rm Diff}^r_\pi(M; (S^1)_{{E}})_0} \subset {{\rm Diff}^r_\pi(M; (S^1)_{{L}})_0}$. 
Then, it follows that $\widehat{g} := [g, \phi] \in {{\rm Diff}^r_\pi(M; (S^1)_{{K}})_0^c}$ and 
$g = g(\phi g^{-1} \phi^{-1}) (\phi g \phi^{-1}) = \widehat{g}\,h$. 

\item The assertion directly follows from (1). 
\eenum 
\vspace*{-7mm} 
\end{proof}

\begin{lem}\label{lem_underline{F}_p} For any $F \in {\rm Isot}^r_\pi(M)$ with $\sigma := \nu(F)$ and $\underline{F}_p(0) = \pi_{S^1}(\tau)$  
there exists $G \in {\rm Isot}_\pi^r(M)$ such that $F \simeq_\ast G$ in ${\rm Isot}^r_\pi(M)$ and 
$\underline{G}_p(s) = \pi_{S^1}(\sigma s + \tau)$ \ $(s \in I)$.
\end{lem}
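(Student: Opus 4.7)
The idea is to straighten the trajectory $\underline{F}_p$ of the base-point into the uniform-speed path $c(s) := \pi_{S^1}(\sigma s + \tau)$ by constructing an $S^1$-valued rotation cocycle on the base and then pulling this correction up to $M$ via the relative isotopy lifting property.

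First, I would construct a rel-endpoints $C^r$ homotopy $\eta : \underline{F}_p \simeq_\ast c$ on $S^1$. Lift $\underline{F}_p$ to a $C^r$ path $\widetilde{\underline{F}_p} : I \to \IR$ with $\widetilde{\underline{F}_p}(0) = \tau$; then the definition of the winding number gives $\widetilde{\underline{F}_p}(1) = \tau + \sigma$. The linear interpolation
\[
\widetilde{\eta}(s,u) := (1-u)\,\widetilde{\underline{F}_p}(s) + u(\sigma s + \tau), \qquad \eta(s,u) := \pi_{S^1}\widetilde{\eta}(s,u),
\]
satisfies $\eta(\cdot,0) = \underline{F}_p$, $\eta(\cdot,1) = c$, $\eta(0,u) = p$, $\eta(1,u) = \underline{F}_1(p)$.

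Next, using the identification $S^1 \cong SO(2)$ as a Lie group, set $\rho(s,u) := \eta(s,u)\cdot \underline{F}_s(p)^{-1} \in SO(2)$, so that $\rho(s,u)\,\underline{F}_s(p) = \eta(s,u)$. Then $\rho$ is $C^r$ and $\rho(s,0) = \rho(0,u) = \rho(1,u) = \id$. Define a $C^r$ isotopy $\widehat{A}$ of $S^1 \times I$ in parameter $u$ by $\widehat{A}((x,s),u) := (\rho(s,u)\,x,\, s)$. Since $\rho(s,0) = \id$, we have $\widehat{A}_0 = \id$, so $\widehat{A} \in {\rm Isot}^r(S^1\times I)_0$; moreover $\widehat{A}$ fixes $(S^1\times\{0,1\})\times I$ pointwise.

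Now $\pi\times\id_I : M \times I \to S^1 \times I$ is a fiber bundle with fiber $N$ and structure group $\Gamma$, and $K := S^1 \times \{0,1\}$ is a closed submanifold of its base. Since $\widehat{A}$ is the identity on $K$, Proposition~\ref{rel_iso_lift} (in its submanifold form) gives a lift $\widetilde{A} \in {\rm Isot}^r_{\pi\times\id_I}(M\times I)_0$ of $\widehat{A}$ that equals the identity on $(M\times\{0,1\})\times I$; write $\widetilde{A}_u(y,s) = (A^{(1)}_u(y,s),\,s)$. Finally, I set
\[
\mathcal{H}(x,s,u) := A^{(1)}_u\bigl(F(x,s),\, s\bigr), \qquad G := \mathcal{H}(\cdot,\cdot,1).
\]
Routine checks yield $\mathcal{H}_0 = F$, $\mathcal{H}(\cdot,0,u) = \id_M$ (using $A^{(1)}_u(\cdot,0) = \id$ and $F_0 = \id$), $\mathcal{H}(\cdot,1,u) = F_1$ (using $A^{(1)}_u(\cdot,1) = \id$), and each $\mathcal{H}_u$ is a $C^r$ bundle isotopy of $\pi$ as a composition of such. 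Projecting to the base at $\pi(x) = p$ and $u = 1$ gives $\underline{G}_p(s) = \rho(s,1)\,\underline{F}_s(p) = \eta(s,1) = c(s)$, so $G$ has the desired form and $\mathcal{H}$ witnesses $F \simeq_\ast G$ in ${\rm Isot}^r_\pi(M)$.

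The main obstacle is forcing the lift $\widetilde{A}$ to be the identity on $(M\times\{0,1\})\times I$: otherwise the twist in the last step would move $F_0 = \id_M$ or $F_1$ and destroy the rel-ends condition. This is exactly what the submanifold version of Proposition~\ref{rel_iso_lift} provides, once we have arranged $\widehat{A}$ to equal the identity on $K \times I$ at the base level, which was the reason for choosing $\eta$ so that $\rho(0,u) = \rho(1,u) = \id$.
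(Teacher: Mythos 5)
Your proposal is correct and follows essentially the same route as the paper: both straighten $\underline{F}_p$ by the rotation correction $\rho(s,u)=\eta(s,u)-\underline{F}_p(s)$ (the paper's path $c=b-a$ contracted linearly to the constant path rel ends) and then lift this correction to the total space. The only difference is that you make the final lifting step explicit by viewing the correction as an ambient isotopy of $S^1\times I$ and invoking Proposition~\ref{rel_iso_lift} for the bundle $\pi\times\mathrm{id}_I$ relative to $S^1\times\{0,1\}$, which is a careful justification of what the paper states in one line as ``the homotopy $L_\eta\underline{F}$ lifts to a $C^r$ homotopy $\Phi: F\simeq_\ast G$''.
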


\bpf 
For simplicity, we use the symbol $[x] := \pi_{S^1}(x)$ $(x \in \IR)$. 
Then, $S^1$ becomes a Lie group under the addition $[x] + [y] = [x+y]$ $(x,y \in \IR)$ (i.e., $S^1 \cong \IR/\IZ$). 
Each $w \in S^1$ induces the translation $L_w$ on $S^1$, $L_w(z) = z + w$ $(z \in S^1)$. 
This defines the $C^\infty$ map $L : S^1 \to {\rm Diff}(S^1)$ and any $c \in {{\cal P}^r(S^1) \equiv C^r(I, S^1)}$ induces an isotopy 
$L_c = (L_{c(t)})_{t \in I} \in {\rm Isot}^r(S^1)$. 
As usual, for maps $f,g : X \to S^1$ the map $f + g : X \to S^1$ is defined by $(f + g)(q) = f(q) + g(q)$ $(q \in X)$. 

Consider the $C^r$ paths $a \equiv \underline{F}_p$, $b(s) = [\sigma s + \tau]$ and $c(s) = b(s) - a(s)$ in $S^1$. 
The path $c$ admits a $C^r$ homotopy $\eta : \e_0 \simeq_\ast c$ in ${{\cal P}^r(S^1)_{0,0}}$, where $\e_0 \in {{\cal P}^r(S^1)}$ is the constant path at the zero $0 = [0]$ in $S^1$. 
In fact, 
the path $a$ has a unique lift $\widetilde{a} \in {\cal P}^r(\IR)$ with $\widetilde{a}(0) = \tau$. 
It satisfies $\widetilde{a}(1) = \lambda(a) + \widetilde{a}(0) = \nu(F) +\tau = \sigma + \tau$. 
The path $b \in {\cal P}^r(S^1)$ has the canonical lift $\widetilde{b} \in {\cal P}^r(\IR)$, $\widetilde{b}(s) = \sigma s + \tau$.
Then the path $\widetilde{c} := \widetilde{b} - \widetilde{a} \in {\cal P}^r(\IR)_{0,0}$ is a lift of $c$ and it admits 
the homotopy $\xi : \widetilde{\e}_0 \simeq_\ast \widetilde{c}$, $\xi_t := t \widetilde{c}$ $(t \in I)$, where $\widetilde{\e}_0 \in {\cal P}^r(\IR)$ is the constant path at $0$. 
It induces the $C^r$ homotopy $\eta := \pi_{S^1}\,\xi : \e_0 \simeq_\ast c$. 
Note that $\eta + a := (\eta_t + a)_{t \in I} : a \simeq_\ast b$ in ${\cal P}^r(S^1)$. 

The homotopy $\eta$ induces a $C^r$ homotopy $L_\eta : I \times I \to 
{\rm Diff}^r(S^1)$, $L_\eta(s,t) = L_{\eta(s,t)}$. 
The latter is regarded as a $C^r$ homotopy $L_\eta = (L_{\eta_t})_{t \in I} : L_{\e_0} = \id \simeq_\ast L_c$ in ${\rm Isot}^r(S^1)_{\id,\id}$. 
Composing with $\underline{F}$, we have the $C^r$ homotopy 
$L_\eta \underline{F} = (L_{\eta_t}\underline{F})_{t \in I} : \underline{F} \simeq_\ast L_c\underline{F}$ in ${\rm Isot}^r(S^1)$.  
It follows that $(L_c\underline{F})_p(s) = (L_c\underline{F})_s(p) = (L_{c(s)}\underline{F}_s)(p) = L_{c(s)} (\underline{F}_p(s)) = a(s) + c(s) = b(s)$.

The homotopy $L_\eta \underline{F}$ lifts to 
a $C^r$ homotopy $\Phi : F \simeq_\ast G$ in ${\rm Isot}^r_\pi(M)$.
Then, $\underline{G}_p = (L_c\underline{F})_p = b$ as required. 
\epf 

Recall the following assumption in Introduction.  

\begin{assumption_ast}\label{assump_perfect} 
Suppose $J$ is an open interval and $\varrho : L \to J$ is a trivial fiber bundle with fiber $N$ and structure group $\Gamma$. 
Then ${\rm Diff}^r_{\varrho,c}(L)_0$ is perfect. 
\end{assumption_ast}

\begin{prop}\label{prop_clf} 
Under Assumption $(\ast)$,  
if $F \in {\rm Isot}^r_\pi(M)_0$ and $|\nu(F)| < \ell \in \IZ_{\geq 1}$, then $clb_\pi(F_1) \leq 2\ell + 1$. 
\end{prop}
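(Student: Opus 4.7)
My plan is to normalize $F$ via Lemma~\ref{lem_underline{F}_p} and then prove the bound by induction on $\ell$, using Lemmas~\ref{lem_fac_iso}, \ref{lem_fac_c}, and \ref{basic_lemma_cl} together with Assumption~$(\ast)$. First I would apply Lemma~\ref{lem_underline{F}_p} to replace $F$ by a homotopic (rel endpoints) isotopy $G \in {\rm Isot}^r_\pi(M)_0$ with $\underline{G}_p(s) = \pi_{S^1}(\sigma s + \tau)$ linear, where $\sigma = \nu(F)$ satisfies $|\sigma| < \ell$. Since $G_1 = F_1$ it then suffices to bound $clb_\pi G_1$.

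\emph{Base case $\ell = 1$.} Since $|\sigma| < 1$, the set $\underline{G}_p(I)$ lies in a proper arc of $S^1$. I would choose a thin closed arc $K \ni p$ and a closed arc with $K \Subset J \subsetneq S^1$ such that the tube $\underline{G}(K \times I) \Subset J$ (possible for $K$ small enough by continuity of $\underline{G}$ and $|\sigma| < 1$). Lemma~\ref{lem_fac_iso} would then factor $G = G^{\ast} H^{\ast}$ with $G^{\ast} \in {\rm Isot}^r_\pi(M, (S^1)_J)_0$ and $H^{\ast} \in {\rm Isot}^r_\pi(M, (S^1)_{K'})_0$, so $G_1 = G^{\ast}_1 H^{\ast}_1$. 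Since ${\rm Int}\,J \cap {\rm Int}\,K' = {\rm Int}\,J - K$ is nonempty, Lemma~\ref{lem_fac_c}\,(2) lets me rewrite this product as $\widehat{G}\,\widehat{H}$ where $\widehat{G} \in {\rm Diff}^r_\pi(M, (S^1)_J)_0^c$ is a single commutator supported in $J$ and $\widehat{H} \in {\rm Diff}^r_\pi(M, (S^1)_{K'})_0$ is supported in $K'$. Assumption~$(\ast)$ applied to the trivial bundle over ${\rm Int}\,K'$, together with Lemma~\ref{basic_lemma_cl}\,(1), yields $cld\,{\rm Diff}^r_{\pi,c}(\widetilde{{\rm Int}\,K'})_0 \leq 2$, so $\widehat{H}$ is a product of at most two commutators supported in arcs. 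Altogether $clb_\pi G_1 \leq 1 + 2 = 3 = 2\cdot 1 + 1$.

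\emph{Inductive step.} Assuming the bound for $\ell - 1 \geq 1$: if $|\sigma| < \ell - 1$ the hypothesis applies directly, and otherwise $|\sigma| \in [\ell - 1, \ell)$ and I aim for a factorization $G_1 = c \cdot A$ where $c \in {\rm Diff}^r_\pi(M, (S^1)_{J_c})_0$ is supported in a proper closed arc $J_c$ of $S^1$ (so $clb_\pi c \leq 2$ by Assumption~$(\ast)$ and Lemma~\ref{basic_lemma_cl}\,(1)) and $A := c^{-1} G_1$ admits an isotopy $\tilde A \in {\rm Isot}^r_\pi(M)_0$ with $|\nu(\tilde A)| < \ell - 1$ (so $clb_\pi A \leq 2\ell - 1$ by the inductive hypothesis). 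These combine to give $clb_\pi G_1 \leq 2 + (2\ell - 1) = 2\ell + 1$. The element $c = \tilde c_1$ would be obtained from a bundle isotopy $\tilde c \in {\rm Isot}^r_\pi(M, (S^1)_{J_c'})_0$ lifting a translation-type isotopy on $J_c \ni p$ that moves $p$ by a carefully chosen $\delta \in (\sigma - \ell + 1, 1)$, with $\tilde A$ built as the pointwise composition $\tilde A_s := \tilde c_s^{-1} G_s$.

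\emph{Main obstacle.} The critical point is verifying $|\nu(\tilde A)| < \ell - 1$. The generic quasimorphism defect of $\nu$ (Fact~\ref{fact_nu}\,(5)(ii)) would only give $|\nu(\tilde A)| < \ell$, falling short by one. To close this gap I would track the winding of the explicit path $s \mapsto \underline{\tilde c}_s^{-1}(\pi_{S^1}(\sigma s + \tau))$ by a direct computation, using the linear form of $\underline{G}_p$ from Lemma~\ref{lem_underline{F}_p} together with the concrete form of $\underline{\tilde c}_s$ (translation by $[\delta s]$ on $J_c$, identity elsewhere), and accounting carefully for the excursions of the path outside the translation region of $J_c$. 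This careful winding-number bookkeeping is the main technical challenge.
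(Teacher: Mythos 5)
Your proposal is correct in outline but its inductive step takes a genuinely different route from the paper. The paper does not induct: after the same normalization via Lemma~\ref{lem_underline{F}_p}, it subdivides the \emph{time} interval into $\ell$ equal slabs $I_i$, writes $f = f^{(\ell)}\cdots f^{(1)}$ with $f^{(i)} = F_{i/\ell}F_{(i-1)/\ell}^{-1}$, notes that each slab winds the basepoint by only $|\sigma|/\ell < 1$ so that Lemma~\ref{lem_fac_iso} splits each $f^{(i)} = g^{(i)}h^{(i)}$ into two factors supported over proper arcs with consecutive supports overlapping, and then applies Lemma~\ref{lem_fac_c} backward along the resulting word of length $2\ell$ plus the Basic Lemma to the last remaining factor, giving $2\ell+1$ at once. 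Your base case is exactly the paper's argument for $\ell = 1$, but your inductive step instead peels off one unit of winding at a time by composing with an auxiliary arc-supported $c$; this buys a shorter write-up per step at the price of the extra verification $|\nu(\tilde A)| < \ell-1$, which the paper's scheme avoids entirely because it never alters the isotopy. That verification — your self-identified ``main obstacle'' — does go through, and in fact more easily than you fear: $\nu(\tilde A) = \lambda$ of the path $s \mapsto \underline{\tilde c}_s^{-1}(\pi_{S^1}(\sigma s+\tau))$, and the winding number of a path depends only on the endpoints of a continuous lift, so no bookkeeping of intermediate excursions is needed. Taking $\widetilde{h}_s$ to be the continuous family of lifts of $\underline{\tilde c}_s^{-1}$ fixing the lifts of $S^1 - J_c$, one gets $\nu(\tilde A) = \widetilde{h}_1(\sigma+\tau) - \tau$, which equals $\sigma - \delta$ provided $\underline{c}^{-1}$ is exact translation by $-\delta$ near $\underline{G}_1(p) = \pi_{S^1}(\sigma+\tau)$. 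Two small corrections are therefore needed: the support arc $J_c$ must be positioned around the \emph{endpoint} $\underline{G}_1(p)$ rather than around $p$ (with the composition $\tilde A_s = \tilde c_s^{\,-1}G_s$ it is $\underline{G}_s(p)$ that $\underline{\tilde c}_s^{\,-1}$ acts on), and the isotopy $\tilde c$ should be supported over $J_c$, i.e.\ lie in ${\rm Isot}^r_\pi(M,(S^1)_{J_c})_0$, not over the complement $J_c'$ as written. With these adjustments your induction closes and yields the same bound $2\ell+1$.
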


\begin{proof}
Define $\sigma, \tau \in \IR$ by $\sigma := \nu(F)$ and $p = \underline{F}_p(0) = \pi_{S^1}(\tau)$.  
By Lemma~\ref{lem_underline{F}_p} 
there exists $G \in {\rm Isot}_\pi^r(M)$ such that $F \simeq_\ast G$ in ${\rm Isot}^r_\pi(M)$ and 
$\underline{G}_p(t) = \pi_{S^1}(\sigma t + \tau)$ \ $(t \in I)$.
Since $F_1 = G_1$, replacing $F$ by $G$, we may assume that $F$ itself satisfies the condition $\underline{F}_p(t) = \pi_{S^1}(\sigma t + \tau)$ \ $(t \in I)$.

Let \ $f := F_1$, \ $f_i := F_{i/\ell} \in {\rm Diff}^r_\pi(M)_0$ \ $(i=0,1, \cdots, \ell)$ \ and \ 
$f^{(i)} := f_i f_{i - 1}^{\ -1} \in {\rm Diff}^r_\pi(M)_0$ \ $(i=1, \cdots, \ell)$. \\ 
Then, $f_0 = \id_M$ \ and \hsh $f = f_\ell = f^{(\ell)}f^{(\ell-1)} \cdots f^{(i)} \cdots f^{(1)}$. \\
Identifying $I_i := [\frac{i-1}{\ell},\frac{i}{\ell}]$ with $I$, we have an isotopy 
$F^{(i)} := (F|_{M \times I_i})\big(f_{i-1}^{\ -1}\times \id_{I_i}) \in {\rm Isot}^r_\pi(M)_0$ 
with ${F^{(i)}}_1 = f^{(i)}$. 
Let $s_i := i(\sigma/\ell) + \tau$
and $p_i := \pi_{S^1}\,(s_i) \in S^1$ $(i = 0,1, \cdots, \ell)$. 
Then, $\underline{f_i}(p) = \underline{F}_{i/\ell}(p) = \pi_{S^1}(s_i) = p_i$. \\[1mm] 
Note that \ 
$\left\{ \hspace{-1mm} 
\bary[c]{ll}
\tau = s_0 < s_1 < \cdots < s_\ell & \mbox{if $\sigma > 0$,} \\[1mm]
\tau = s_0 = s_1 = \cdots = s_\ell & \mbox{if $\sigma = 0$,} \\[1mm]  
s_\ell < \cdots < s_1 < s_0 = \tau & \mbox{if $\sigma < 0$.}
\eary \right.$ \\[1.5mm]
Since $|\sigma|/\ell < 1$, there exists $\e > 0$ such that $|\sigma|/\ell + 2\e < 1$. 
We define $J_i^- \Subset J_i \subset S^1$ $(i=1,2, \cdots, \ell)$ by \\[2mm] 
\hsp $(J_i, J_i^-) := 
\left\{ \hspace{-1mm} 
\bary[c]{ll}
\big(\pi_{S^1}([s_{i-1} - \e, s_i + \e]), \pi_{S^1}([s_{i-1}, s_i])\big) & \mbox{if $\sigma \geq 0$}, \\[2mm] 
\big(\pi_{S^1}([s_i - \e, s_{i-1} + \e]), \pi_{S^1}([s_i, s_{i-1}])\big) & \mbox{if $\sigma < 0$}. 
\eary \right.$ \\[2mm] 
Then, $J_i$ is a closed arc in $S^1$, since it is the image of a closed interval in $\IR$ of width $= |\sigma|/\ell + 2\e < 1$. \\
For each $i = 1, 2, \cdots, \ell$ it follows that $\underline{F^{(i)}} = \underline{F}|_{S^1 \times I_i}\big(\underline{f_{i-1}}^{-1}\times \id_{I_i}) \in {\rm Isot}^r(S^1)_0$ and \\
\hspp $\underline{F^{(i)}}(p_{i-1} \times I_i) 
= \underline{F}(p \times I_i) 
= J_i^- \Subset J_i$. \\
Hence, there exists a small closed arc $K_i$ in $S^1$ such that \\
\hspp $p_{i-1} \in {\rm Int}\,K_i \subset K_i \Subset J_i$, \ \ 
$\underline{F^{(i)}}(K_i \times I_i) \Subset J_i$,  \\
\hspp ${\rm Int}\,J_i \cap {\rm Int}\,K_i' \neq \emptyset$ \ \ and \ \ ${\rm Int}\,J_{i-1} \cap {\rm Int}\,K_i' \neq \emptyset$ \ if $i \geq 2$.  \\
Then, we can apply Lemma~\ref{lem_fac_iso} to $F^{(i)} \in {\rm Isot}^r_\pi(M)_0$ to obtain a factorization $F^{(i)} = G^{(i)} H^{(i)}$ such that \\
\hspp $G^{(i)} \in {\rm Isot}^r_\pi(M; (S^1)_{{J_i}})_0$ \ \ and \ \ 
$H^{(i)} \in 
{\rm Isot}^r_\pi(M; (S^1)_{{K_i'}})_0$. \\[1mm] 
It follows that 
\bit 
\itemI $f^{(i)} = F^{(i)}_1 = G^{(i)}_1 H^{(i)}_1 = g^{(i)} h^{(i)}$, where \\
\hsp $g^{(i)} := G^{(i)}_{\,1} \in {\rm Diff}^r_\pi(M; (S^1)_{{J_i}})_0$ \ \ and \ \ 
$h^{(i)} := H^{(i)}_{\,1} \in 
{\rm Diff}^r_\pi(M; (S^1)_{{K_i'}})_0$,  
\itemII 
${\rm Int}\,J_i \cap {\rm Int}\,K_i' \neq \emptyset$ \ \ $(i =1, \cdots, \ell)$, \ \ 
${\rm Int}\,J_{i-1} \cap {\rm Int}\,K_i' \neq \emptyset$ \ \ \ $(i =2, \cdots, \ell)$,  
\vskip 1mm 
\itemiii 
$f = f^{(\ell)}f^{(\ell-1)} \cdots f^{(i)} \cdots f^{(1)}
= (g^{(\ell)} h^{(\ell)}) (g^{(\ell-1)} h^{(\ell-1)})\cdots (g^{(i)} h^{(i)}) \cdots (g^{(1)} h^{(1)})$.  
\eit 

Let $m := 2\ell$ and define \ $\phi_j$, $L_j$ \ $(j=1, \cdots, m)$ \ by \\[2mm] 
\hspace*{15mm} 
$\phi_j := 
\left\{ \hspace{-1mm} 
\bary[c]{ll}
g^{(i)} & (j=2i) \\[2mm]
h^{(i)} & (j=2i-1)
\eary \right.$ 
\hsp 
$L_j := 
\left\{ \hspace{-1mm} 
\bary[c]{ll}
J_i & (j=2i) \\[2mm]
K_i' & (j=2i-1)
\eary \right.$ \hsp $(i=1, \cdots, \ell)$. 
\vskip 3mm 

Then, the conditions (i) $\sim$ (iii) are rephrased as follows : 

\bit 
\itemiv $\phi_j \in {\rm Diff}^r_\pi(M; (S^1)_{{L_j}})_0$
\hsp (v) \ $f = \phi_m \phi_{m-1}\cdots \phi_1$  
\item[(vi)] ${\rm Int}\,L_j \cap {\rm Int}\,L_{j-1}\neq \emptyset$ \ $(j=2, 3, \cdots, m)$. 
\eit 

We apply Lemma~\ref{lem_fac_c} inductively (backward from $m$ to 2) to the factorization $f = \phi_m \cdots \phi_1$, \ 
which induces a new factorization \ $f = \widehat{\phi}_m \cdots \widehat{\phi}_{2}\phi_{1}'$, \\ 
where \ 
$\widehat{\phi}_j \in {\rm Diff}^r_\pi(M; (S^1)_{{L_j}})_0^c$ \ \ $(j=m, m-1, \cdots, 2)$ \ and \ 
$\phi_1' \in {\rm Diff}^r_\pi(M; (S^1)_{{L_1}})_0$. 

Under Assumption $(\ast)$, Lemma~\ref{basic_lemma_cl} induces a factorization \ $\phi_1' = \widehat{\phi}_{1}\widehat{\phi}_{0}$ \ 
for some $\widehat{\phi}_1, \widehat{\phi}_0 \in {\rm Diff}^r_\pi(M; (S^1)_{{L_{1}}})_0^c$. 

Finally, from the factorization \ $f = \widehat{\phi}_m \cdots \widehat{\phi}_{2}\widehat{\phi}_1\widehat{\phi}_0$, \ $(\widehat{\phi}_j \in {\rm Diff}^r_\pi(M; (S^1)_{L_j})_0^c, L_0 := L_1)$, 
we conclude that $clb_\pi f \leq m+1 = 2\ell+1$.  
\end{proof} 

\subsection{Boundedness of bundle diffeomorphism groups over $S^1$} \mbox{} 

Suppose $\pi : M \to S^1$ is a fiber bundle with fiber $N$ and structure group $\Gamma < {\rm Diff}(N)$ and 
$r \in \IZ_{\geq 0} \cup \{ \infty \}$. 
Let $k := k(\pi, r) \in \IZ_{\geq 0}$. 
Below we study the cases $k \geq 1$ and $k=0$ separately, since these two cases lead to opposite consequences. 

\vskip 2mm 

\noindent {\bf {[\hspace{0.2mm}I\hspace{0.2mm}]} The case $k \geq 1$} \hsp  
We assume that $(N, \Gamma, r)$ satisfies Assumption $(\ast)$. 

\bthm\label{thm_1} 
If $f \in {\rm Diff}^r_\pi(M)_0$ and $\widehat{\nu}(f) = [s] \in \IR_k$ $\big(s \in \big(-\frac{k}{2}, \frac{k}{2}\big]\big)$, 
then $clb_\pi f \leq 2[|s|]+ 3 \leq k+3$. 
\ethm

\bpf 

Since $\nu({\rm Isot}^r_\pi(M)_{\id, f}) = \widehat{\nu}(f)$ (Fact~\ref{fact_nu_homo}\,(2)), 
there exists $F \in {\rm Isot}^r_\pi(M)_{\id, f}$ with $\nu(F) = s$. 
Then, \\
$|\nu(F)| = |s| < [|s|] + 1 \equiv \ell \in \IZ_{\geq 1}$ \ and Proposition~\ref{prop_clf} implies that 
${clb_\pi f} \leq 2\ell + 1 = 2[|s|] + 3 \leq k+3$. 
\epf

\bcor\label{cor_S^1_k_geq_1} \ \ {\rm (1)} \ $clb_\pi d \,{\rm Diff}^r_\pi(M)_0 \leq k + 3$. 
\benum
\item[{\rm (2)}] 
${\rm Diff}_\pi^r(M)_0$ is {\rm (i)} uniformly simple relative to ${\rm Ker}\,P$ and {\rm (ii)} bounded. 
\eenum 
\ecor 

\noindent {\bf {[\hspace{0.2mm}I\hspace{-0.2mm}I\hspace{0.2mm}]} The case $k = 0$} 

In this case, $(\IR_k, \IZ_k) = (\IR, \IZ)$ and the arguments in {Subsection 4.2 and Fact~\ref{fact_nu_homo_pre}\,(2)}
imply the following conclusion on the map \ $\widehat{\nu} = \nu R^{-1}$.  

\bthm\label{prop_nu} \mbox{} The map $\widehat{\nu} : \big({\rm Diff}^r_\pi(M)_0, {\rm Ker}\,P) \to (\IR, \IZ)$ has the following properties. 
\benum 
\item[{\rm (1)}] The map $\widehat{\nu} : {\rm Diff}^r_\pi(M)_0 \lra \IR$ 
is a surjective quasimorphism, which 
satisfies the following estimate: \\
\hspace*{20mm} 
$(\ast)$ \ \ $|\widehat{\nu}(fg) - \widehat{\nu}(f) - \widehat{\nu}(g)| < 1$ \ for any $f, g \in {\rm Diff}^r_\pi(M)_0$. 

\item[{\rm (2)}] The map $\widehat{\nu} : {\rm Ker}\,P  \to \IZ$ is a surjective group homomorphism. 
\eenum
\ethm 

{By Fact~\ref{fact_qm}\,(3) we have the following conclusions.

\bcor\label{cor_unbdd} The group ${\rm Diff}^r_\pi(M)_0$ is unbounded and not uniformly perfect. 
\ecor
}

\subsection{Attaching maps} \mbox{} 

In this {subsection} we describe the relation between the integer $k(\pi, r)$ and the attaching map of $\pi$ for a fiber bundle $\pi$ over $S^1$. 
We regard $S^1 = \IR/\IZ$. Suppose $N$ is a $C^\infty$ manifold and $\Gamma < {\rm Diff}^\infty(N)$. 
For $\phi \in \Gamma$ the mapping torus $\pi_\phi : M_\phi \to S^1$  
is obtained from the product $N \times I$ by identifying $N \times \{ 0 \}$ and $N \times \{ 1 \}$ by $\phi$. 
Here we represent it as the quotient space of $N \times \IR$ under the equivalence relation : 
\hsp $(x,s) \sim_\phi (y,t)$ \LLRA $(y,t) = (\phi^{-n}(x), s+n)$ \ for some $n \in \IZ$ \hsh $((x,s), (y,t) \in N \times \IR)$; \\[1mm] 
\hspace*{30mm} $M_\phi := (N \times \IR)/\sim_\phi$, \hsh $\pi_\phi([x,s]) := [s]$ \ $([x,s] \in M_\phi)$. \\
The $C^\infty$ structure on $M_\phi$ is defined so that the quotient map $\rho_\phi : N \times \IR \to M_\phi$ is a $C^\infty$ covering projection. 
Then, $\pi_\phi : M_\phi \to S^1$ is a $C^\infty$ $(N, \Gamma)$-bundle 
with the canonical maximal $(N, \Gamma)$-atlas induced by the map $\rho_\phi : N \times \IR \to M_\phi$. 
We have the following pullback diagrams : \\[1mm] 
\hspace*{26mm}
$(\ast)_\phi$ \hsh $\bary[c]{c@{ \ }c@{ \ }c@{ \ }c@{ \ }cll}
& & & \rho_\phi & & & \\[-1.5mm] 
N \times I & \subset & N \times \IR & \raisebox{0.5mm}{\makebox(25,0){\rightarrowfill}} & M_\phi & & \\[0mm] 
\rotatebox{90}{$\lla$} & \hspace{-25mm} \smash{\raisebox{2mm}{$\pi_N'$}} 
& \rotatebox{90}{$\lla$} & \hspace{-30mm} \smash{\raisebox{2mm}{$\pi_N$}} 
& \rotatebox{90}{$\lla$} & \hspace{-5mm} \smash{\raisebox{2mm}{$\pi_\phi$}} & \\[-0.8mm] 
I & \subset & \IR & \raisebox{0.5mm}{\makebox(25,0){\rightarrowfill}} & S^1 & & \\[-2mm] 
& & & \pi_{S^1} & & & 
\eary$ \hsh 
$\bary[c]{l}
\pi_N(x,s) = s, \hsh \pi_N'(x,s) = s, \\[2mm] 
\rho_\phi(x,s) = [x,s], \hsh 
\pi_{S^1}(s) = [s]. 
\eary$ \\[1.5mm] 
Here we regard $\pi_N$ and $\pi_N'$ as the product $(N, \Gamma)$-bundles. 

\bfact\label{fact_mapping-torus-1} \mbox{} 
By taking a pull-back by the map $\pi_{S^1}: \IR \to S^1$, it is seen that 
any $(N, \Gamma)$-bundle $\pi : M \to S^1$ 
is isomorphic to a mapping torus $\pi_\phi$ for some $\phi \in \Gamma$. 
\efact 

Consider the mapping tori $\pi_\phi : M_\phi \to S^1$ and $\pi_\psi : M_\psi \to S^1$ associated to $\phi, \psi \in \Gamma < {\rm Diff}^\infty(N)$. 
Let $r \in \IZ_{\geq 0} \cup \{ \infty \}$. 
Below we are concerned with the subgroup \ ${\rm Isot}^r_\Gamma(N) := \{ F \in {\rm Isot}^r(N) \mid F_t \in \Gamma \ (t \in I) \}$ \\ 
and the subsets 
\hspace*{0mm} 
\btab[t]{l}
${\rm Diff}^r(\pi_{\phi}, \pi_{\psi}; \id_{S^1}) := \{ f \in {\rm Diff}^r(\pi_{\phi}, \pi_{\psi}) \mid \underline{f} = \id_{S^1}\}$ \hsh and \\[2mm]
${\rm Isot}^r(\pi_{\phi}, \pi_{\psi}; \id_{S^1}) := \{ F \in {\rm Isot}^r(\pi_{\phi}, \pi_{\psi}) \mid \underline{F}_t = \id_{S^1} \ (t \in I)\}$. 
\etab \\[2mm] 
The meaning of similar notations are obvious from these examples.

\bfact\label{fact_mapping-torus-2} \mbox{} 
\benum
\item Any isotopy $F \in {\rm Isot}^r_\Gamma(N)_{\psi, \phi}$ 
induces $G \in {\rm Diff}^r(\pi_\phi, \pi_\psi; \id_{S^1})$ defined by \\
\hspp $G : M_\phi \to M_\psi$ : $G([x,s]) = [F_{\alpha(s)}(x), s]$, \\
where $\alpha \in C^r(I, I)$ such that $\alpha(t) = 0$ $(t \in [0, \e))$ and $\alpha(t) = 1$ $(t \in (1- \e, 1])$ for some $\e \in (0,1/2)$. \\
In particular, if $\phi$ admits a $C^r$ isotopy $\id_N \simeq \phi$ in $\Gamma$, then
$\pi_\phi$ is $C^r$ isomorphic to the product bundle $\pi_{\id_N} : M_{\id_N} \to S^1$ as $(N, \Gamma)$-bundles. 

\item If $\chi \in \Gamma$ and $\psi \chi = \chi \phi$, then it induces the map 
$\widehat{\chi} \in {\rm Diff}^r(\pi_\phi, \pi_\psi; \id_{S^1})$ defined by \\
\hspp $\widehat{\chi} : M_\phi \to M_\psi$ : $\widehat{\chi}([x,s]) = [\chi(x), s]$. 

\item The attaching map $\phi$ induces the map 
$\widehat{\phi} \in {\rm Diff}^\infty(\pi_\phi, \pi_\phi; \id_{S^1})$ defined by \\
\hspp $\widehat{\phi} : M_\phi \to M_\phi$ : $\widehat{\phi}([x,s]) = [\phi(x), s]$. \\
It admits the isotopy 
$\Phi \in {\rm Isot}^\infty_{\pi_\phi}(M_\phi)_{\id, \widehat{\phi}} = {\rm Isot}^\infty(\pi_\phi, \pi_\phi)_{\id, \widehat{\phi}}$ defined by \\
\hspp $\Phi_t : M_\phi \to M_\phi$ : $\Phi_t([x,s]) = [x, s+t]$ \hsh $(t \in I)$ \hsp (the $2\pi$ rotation over $S^1$) \\
Since {$\zeta \equiv \underline{\Phi} \in {\rm Isot}^\infty(S^1)_{\id, \id}$ is the $2\pi$ rotation of $S^1$}, we have $\nu(\Phi) = 1$. 

\item Let $k = k(\pi_\phi,r)$ and recall the surjective group homomorphism $\nu : {\rm Isot}^r_{\pi_\phi}(M_\phi)_{\id, \id} \lra k\IZ$. 
For $\ell \in \IZ$ the following conditions are equivalent. 
\bit 
\itemI There exists $\Psi \in {\rm Isot}^r_{\pi_\phi}(M_\phi)_{\id, \id}$ with $\nu(\Psi) = \ell$ \ \ (i.e., $\ell \in k\IZ)$. 
\itemII There exists $\chi \in {\rm Isot}^r_{\pi_\phi}(M_\phi)_{\id, \widehat{\phi}^\ell}$ with $\underline{\chi} = \id_{S^1 \times I}$ 
\ \ (i.e., ${\rm Isot}^r(\pi_\phi, \pi_\phi; \id_{S^1})_{\id, \widehat{\phi}^\ell} \neq \emptyset$).  
\eit 
\eenum 
\efact 

\bpf (4) First note that $\Phi^\ell \in {\rm Isot}^r_{\pi_\phi}(M_\phi)_{\id, \widehat{\phi}^\ell}$ and $\underline{\Phi}^\ell = \zeta^\ell$. 

(i) $\Rightarrow$ (ii) : 
By { Lemmas~\ref{lem_underline{F}_p} and ~\ref{lem_sdr}} we can isotope $\Psi$ relative to ends to obtain $\Psi' \in {\rm Isot}^r_{\pi_\phi}(M_\phi)_{\id, \id}$ with $\underline{\Psi'} = \zeta^\ell$. 
Then, it follows that 
$\chi := (\Psi')^{-1}\Phi^\ell \in {\rm Isot}^r_{\pi_\phi}(M_\phi)_{\id, \widehat{\phi}^\ell}$ and 
$\underline{\chi} = (\zeta^\ell)^{-1} \zeta^\ell = \id_{S^1 \times I}$. 

(ii) $\Rightarrow$ (i) : Since $\nu(\chi) = 0$, we can take $\Psi := \Phi^\ell\chi^{-1}$.
\epf

Fact~\ref{fact_mapping-torus-2}\,(4)\mbox{} leads us to study the set ${\rm Isot}^r(\pi_\phi, \pi_\phi; \id_{S^1})_{\id, \widehat{\phi}^\ell}$ more closely. 
In Fact~\ref{fact_mapping-torus-3} - \ref{fact_mapping-torus-4} 
we clarify basic relations among ${\rm Isot}^r(\pi_\phi, \pi_\psi; \id_{S^1})$, ${\rm Isot}^r(\pi_N, \pi_N; \id_{\IR})$ and ${\rm Isot}^r(\pi_N', \pi_N'; \id_I)$, 
which also ensure the well-definedness and smoothness of the maps defined in Fact~\ref{fact_mapping-torus-2}\,(1) $\sim$ (3). 



Note that each $g \in {\rm Diff}^r(\pi_N, \pi_N; \id_{\IR})$ is interpreted as 
a $C^r$ family $g_s \in \Gamma$ $(s \in \IR)$ defined by $(g_s(x), s) = g(x,s)$ $(x \in N)$.
A similar remark is applied to ${\rm Diff}^r(\pi_N', \pi_N'; \id_{I}) (= {\rm Isot}^r_\Gamma(N))$. 

\bfact\label{fact_mapping-torus-3} \mbox{} 
There exists a bijection 
\ \ ${\rm Diff}^r(\pi_N', \pi_N'; \id_I; \phi, \psi) \ni h \lmt \widehat{h} \in {\rm Diff}^r(\pi_{\phi}, \pi_{\psi}; \id_{S^1})$, \\
\hsp where 
\btab[t]{@{\ }c@{ \ }l}
(i) & ${\rm Diff}^r(\pi_N', \pi_N'; \id_I; \phi, \psi) := \{ h \in {\rm Diff}^r(\pi_N', \pi_N'; \id_I) \mid h : (\sharp) \}$ \\[2mm] 
& \hsh $(\sharp)$ \ $\psi h_1 = h_0 \phi$ \ and \ the concatenation $(h_s)_{s \in I} \ast (\psi^{-1}h_s \phi)_{s \in I}$ is a $C^r$ family in $\Gamma$. \\[2mm] 
(ii) & $\widehat{h}$ is defined by \ $\widehat{h}([x,s]) = [h_s(x), s]$ $((x, s) \in N \times I)$. 
\etab 
\vskip 2mm 
This bijection is obtained as the composition of two bijections defined in (1) and (2) below. 
\benum
\item 
For any $g \in {\rm Diff}^r(\pi_N, \pi_N; \id_{\IR})$ the following conditions are equivalent. \\
\hsp $(\flat \,1)$ \  $\rho_\psi g = \widehat{g} \rho_\phi$ for some (unique) $\widehat{g} \in {\rm Diff}^r(\pi_{\phi}, \pi_{\psi}; \id_{S^1})$. \\
\hsp $(\flat\,2)$ \ $(x,s) \sim_\phi (y,t)$ \LLRA $g(x,s) \sim_\psi g(y,t)$ \hsh $(\forall (x,s), (y,t) \in N \times \IR)$ \\
\hsp $(\flat\,3)$ \ $\psi g_{s+1} = g_s \phi$ \hsh $(\forall s \in \IR)$ 

\bit 
\itemI From the pullback diagrams $(\ast)_\phi$ and $(\ast)_\psi$ it is seen that  
for any $f \in {\rm Diff}^r(\pi_{\phi}, \pi_{\psi}; \id_{S^1})$ 
there exists a unique $\widetilde{f} \in {\rm Diff}^r(\pi_N, \pi_N; \id_{\IR})$ with $\rho_\psi \widetilde{f} = f \rho_\phi$. 
\itemII 
Hence we have the bijection \ \ 
${\rm Diff}^r(\pi_N, \pi_N; \id_{\IR}; \phi, \psi) \ni g \lmt \widehat{g} \in {\rm Diff}^r(\pi_{\phi}, \pi_{\psi}; \id_{S^1})$, \\
where \ ${\rm Diff}^r(\pi_N, \pi_N; \id_{\IR}; \phi, \psi) := \{ g \in {\rm Diff}^r(\pi_N, \pi_N; \id_{\IR}) \mid \psi g_{s+1} = g_s \phi \ (s \in \IR) \}$. \\
Its inverse is given by $f \lmt \widetilde{f}$. 
\eit 

\item Consider the restriction map : \ \  
${\rm Diff}^r(\pi_N, \pi_N; \id_{\IR}) \lra {\rm Diff}^r(\pi_N', \pi_N'; \id_I)$ : $g \lmt g' := g|_{N \times I}$. 

\bit 
\itemI The recursive condition $(\flat\,3)$ leads to the following conclusion : 
for any $h \in {\rm Diff}^r(\pi_N', \pi_N'; \id_I)$ \\
\hsp $h = g'$ for some $g \in {\rm Diff}^r(\pi_N, \pi_N; \id_{\IR}; \phi, \psi)$ iff $h$ satisfies the condition $(\sharp)$. 

\itemII We have the bijection \hsh 
${\rm Diff}^r(\pi_N, \pi_N; \id_{\IR}; \phi, \psi) \ni g \lmt g' \in {\rm Diff}^r(\pi_N', \pi_N'; \id_I; \phi, \psi)$. 

\eit 

\item Consider the set ${\rm Diff}^r(\pi_N', \pi_N'; \id_I; \phi, \psi)' := \{ h \in {\rm Diff}^r(\pi_N', \pi_N'; \id_I) \mid \psi h_1 = h_0 \phi \}$. 

\bit 
\itemI For example, $h \in {\rm Diff}^r(\pi_N', \pi_N'; \id_I; \phi, \psi)$ if 
$h \in {\rm Diff}^r(\pi_N', \pi_N'; \id_I; \phi, \psi)'$ and 
$h$ is end stationary, that is, $h_s = h_0$ $(s \in [0,\e])$, $h_s = h_1$ \ $(s \in [1-\e,1])$ for some $\e \in (0, 1/2)$. 

\itemII If $h \in {\rm Diff}^r(\pi_N', \pi_N'; \id_I; \phi, \psi)'$, then 
$h_\alpha := (h_{\alpha(s)})_{s \in I} \in {\rm Diff}^r(\pi_N', \pi_N'; \id_I; \phi, \psi)'$ is end stationary 
for any $\alpha \in C^r(I, I)$ such that $\alpha(t) = 0$ $(t \in [0, \e))$ and $\alpha(t) = 1$ for some $\e \in (0, 1/2)$. 
\eit 
\eenum 
\efact 

Fact~\ref{fact_mapping-torus-3} derives the corresponding conclusions for isotopies. 
Note that each $G = (G_t)_{t \in I} \in {\rm Isot}^r(\pi_N, \pi_N; \id_{\IR})$ is interpreted as 
a $C^r$ family of isotopies $G_{\ast, s} := ((G_t)_s)_{t \in I} \in {\rm Isot}^r_\Gamma(N)$ $(s \in \IR)$. 
A similar remark is applied to ${\rm Isot}^r(\pi_N', \pi_N'; \id_{I})$. 

\bfact\label{fact_mapping-torus-4} \mbox{} 
There exists a bijection 
\ \ ${\rm Isot}^r(\pi_N', \pi_N'; \id_I; \phi, \psi) \ni H \lmt \widehat{H} \in {\rm Isot}^r(\pi_{\phi}, \pi_{\psi}; \id_{S^1})$, \\
\hsp where 
\btab[t]{@{\ }c@{\ }l}
(i) & ${\rm Isot}^r(\pi_N', \pi_N'; \id_I; \phi, \psi) := \{ H \in {\rm Isot}^r(\pi_N', \pi_N'; \id_I) \mid H : (\sharp)_{\rm isot} \}$ \\[2mm] 
& $(\sharp)_{\rm isot}$ \ 
\btab[t]{l}
$\psi H_{\ast, 1} = H_{\ast, 0} \phi$ \ \ (i.e., $\psi (H_t)_1 = (H_t)_0 \phi$ \ $(t \in I)$) \ \ and \\[2mm] 
the concatenation $(H_{\ast,s})_{s \in I} \ast (\psi^{-1}H_{\ast,s} \phi)_{s \in I}$ is a $C^r$ family of isotopies in ${\rm Isot}^r_\Gamma(N)$.
\etab \\[8mm] 
(ii) & $\widehat{H} = \big( \widehat{H}_t\big)_{t \in I}$ is defined by \ $\widehat{H}_t := \widehat{H_t}$ \ \ 
(i.e., $\widehat{H}_t([x,s]) = [(H_t)_s(x), s]$ $((x, s) \in N \times I)$. 
\etab 
\vskip 2mm 

\benum 
\item This bijection is obtained as the composition of two bijections : \\[0.5mm] 
\hspace*{10mm} $\bary[t]{ccccc@{}}
{\rm Isot}^r(\pi_N', \pi_N'; \id_I; \phi, \psi) & \lla & {\rm Isot}^r(\pi_N, \pi_N; \id_{\IR}; \phi, \psi) & \lra 
& {\rm Isot}^r(\pi_{\phi}, \pi_{\psi}; \id_{S^1}) \\[2mm] 
G' := ((G_t)')_{t \in I} & \smash{\raisebox{2mm}{\rotatebox{180}{$\lmt$}}} & G = (G_t)_{t \in I} & \lmt & 
\widehat{G} := \big(\hspace{0mm}\widehat{\hspace{0.5mm}G_t\hspace{0.5mm}}\hspace{0mm}\big)_{t \in I}
\eary$. \\[2.5mm] 
Here  
${\rm Isot}^r(\pi_N, \pi_N; \id_{\IR}; \phi, \psi) 
:= \{ G \in {\rm Isot}^r(\pi_N, \pi_N; \id_{\IR}) \mid \psi\, G_{\ast, s+1} = G_{\ast, s} \,\phi \ (s \in \IR) \}$.  

\vskip 0mm 
\item[] When $\phi = \psi$, these bijections are group isomorphisms. 

\item Consider the set ${\rm Isot}^r(\pi_N', \pi_N'; \id_I; \phi, \psi)' 
:= \{ H \in {\rm Isot}^r(\pi_N', \pi_N'; \id_I) \mid \psi H_{\ast, 1} = H_{\ast, 0} \phi \}$. 

\bit 
\itemI $H \in {\rm Isot}^r(\pi_N', \pi_N'; \id_I; \phi, \psi)$ if 
$H \in {\rm Isot}^r(\pi_N', \pi_N'; \id_I; \phi, \psi)'$ and 
$H$ is end stationary, that is, $H_{\ast, s} = H_{\ast, 0}$ $(s \in [0,\e])$, $H_{\ast, s} = H_{\ast, 1}$ \ $(s \in [1-\e,1])$ for some $\e \in (0, 1/2)$. 

\itemII If $H \in {\rm Isot}^r(\pi_N', \pi_N'; \id_I; \phi, \psi)'$, then 
$H_{\ast, \alpha} := (H_{\ast, \alpha(s)})_{s \in I} \in {\rm Isot}^r(\pi_N', \pi_N'; \id_I; \phi, \psi)'$ is end stationary 
for any $\alpha \in C^r(I, I)$ such that $\alpha(t) = 0$ $(t \in [0, \e))$ and $\alpha(t) = 1$ for some $\e \in (0, 1/2)$. 
\eit 
\eenum 
\efact 

When $\phi = \psi$, we omit the symbol $\psi$ from any notations defined in Fact~\ref{fact_mapping-torus-4} or used below.   

In order to interpret conditions on bundle isotopies to those on families of isotopies on fibers, 
we introduce the following notations. 

\begin{notation}
Let $C^r(I, {\rm Isot}^r_\Gamma(N))$ denote the set of $C^r$ families $\Phi$ of isotopies $\Phi_s \in {\rm Isot}^r_\Gamma(N)$ $(s \in I)$.
For any $\xi \in \Gamma$ we have its subset \ $C^r(I, {\rm Isot}^r_\Gamma(N)_{\id, \xi}; \phi, \psi) := 
\{ \Phi \in C^r(I, {\rm Isot}^r_\Gamma(N)_{\id, \xi}) \mid \psi \,\Phi_1 = \Phi_0\,\phi \}$. \\
There exist canonical identifications \ $C^r(I, {\rm Isot}^r_\Gamma(N)) = {\rm Isot}^r(\pi_N', \pi_N'; \id_I)$ \ and \\
\hspp $C^r(I, {\rm Isot}^r_\Gamma(N)_{\id, \xi}; \phi, \psi) = {\rm Isot}^r(\pi_N', \pi_N'; \id_I; \phi, \psi)_{\id_{N \times I},\xi \times \id_I}'$. 
\end{notation}

Now we concentrate on the mapping torus $\pi_\phi : M_\phi \to S^1$. Let $k = k(\pi_\phi, r) \in \IZ_{\geq 0}$. 

\bprop\label{prop_attaching_map} \mbox{} 
\benum
\item[{\rm (1)}] There exists an isotopy 
$\chi \in {\rm Isot}^r_{\pi_\phi}(M_\phi)_{\id, \widehat{\phi}^k}$ with $\underline{\chi} = \id_{S^1 \times I}$. 
\bit 
\itemI This isotopy $\chi$ is restricted on any fiber of $M_\phi$ to yield a $C^r$ isotopy $\id_N \simeq \phi^k$ in $\Gamma$. 
\itemII For any connected component $L$ of $N$, we have $\phi^k(L) = L$. 
Hence, $\ell|k$ \ if $\ell \in \IZ_{\geq 1}$ satisfies $\phi^\ell(L) = L$ and $\phi^i(L) \neq L$ $(i = 1, 2,\cdots, \ell-1)$. 
\eit 
\item[{\rm (2)}] For $\ell \in \IZ$ we have \ \ $\ell \in k\IZ$ \LLRA $C^r(I, {\rm Isot}^r_\Gamma(N)_{\id, \phi^\ell}; \phi) \neq \emptyset$, \\
\hsh $($that is, there is a $C^r$ family of isotopies 
$H_{\ast, s} \in {\rm Isot}^r_\Gamma(N)_{\id, \phi^\ell}$ $(s \in I)$ with $\phi H_{\ast, 1} = H_{\ast, 0} \phi$.$)$ 
\bit 
\itemI For example, if $\phi^\ell = \id_N$, then $\ell \in k\IZ$, since we can take $H_{\ast, s} = \id$. 
\itemII $k \geq 1$ \LLRA 
\btab[t]{@{}l}
There is $\ell \in \IZ_{\geq 1}$ which admits a $C^r$ family of isotopies $H_{\ast, s} \in {\rm Isot}^r_\Gamma(N)_{\id, \phi^\ell}$ $(s \in I)$ \\[2mm]
with $\phi H_{\ast, 1} = H_{\ast, 0} \phi$. 
\etab 
\eit 
\eenum
\eprop

\bpfb 
(1) The existence of $\chi$ follows from Fact~\ref{fact_mapping-torus-2}\,(4). 

\benum
\item[(2)] Fact~\ref{fact_mapping-torus-4} implies the following conclusion. 
\bit 
\itema There exists a bijection : \\
\hsp ${\rm Isot}^r(\pi_N', \pi_N'; \id_I; \phi, \phi)_{\id_{N \times I},\phi^\ell \times \id_I} \ni H \lmt 
\widehat{H} \in {\rm Isot}^r(\pi_{\phi}, \pi_{\phi}; \id_{S^1})_{\id, \widehat{\phi}^\ell}$. 
\itemb 
${\rm Isot}^r(\pi_N', \pi_N'; \id_I; \phi, \phi)_{\id_{N \times I},\phi^\ell \times \id_I} \neq \emptyset$ 
\LLRA ${\rm Isot}^r(\pi_N', \pi_N'; \id_I; \phi, \phi)_{\id_{N \times I},\phi^\ell \times \id_I}' \neq \emptyset$
\eit 
Hence, by Fact~\ref{fact_mapping-torus-2}\,(4) it is seen that \ 
$\ell \in k\IZ$ \LLRA $C^r(I, {\rm Isot}^r_\Gamma(N)_{\id, \phi^\ell}; \phi) \neq \emptyset$. 
\eenum 
\vskip -8mm 
\epf

We can use the mapping torus construction to provide some examples of fiber bundles $\pi : M \to S^1$ with $k(\pi, r) = 0$. 

\bexp\label{exp_k=0} 
Consider the torus $T^2 = \IR^2/\IZ^2$. 
It is known that $\pi_0({\rm Diff}^r(T^2)) \cong SL(2, \IZ)$. 
In fact, each matrix $A \in SL(2, \IZ)$ induces 
a diffeomorphism $\phi = \phi_A \in {\rm Diff}^\infty(T^2)$ defined by $\phi_A[x,y] = [(x,y)A]$ ($[x,y] \in T^2$). 
It determines the mapping torus $\pi_A : M_A \to S^1$, which is a locally trivial bundle with fiber $T^2$. 
Let $k := k(\pi_A, r) \in \IZ_{\geq 0}$ and let $\ell \in \IZ_{\geq 1} \cup \{ \infty \}$ denote the order of $A$ in $SL(2, \IZ)$. \\
\hsh {\bf Claim.} $k = 0$ if $\ell = \infty$ and $k = \ell \geq 1$ if $\ell < \infty$. \\
This follows from the following observations. 
\bit 
\itemI Proposition~\ref{prop_attaching_map}\,(1) implies $\phi_{A^k} = (\phi_A)^k \simeq \id_{T^2}$ and $A^k = E_2$. 
Hence, (a) if $\ell = \infty$, then $k = 0$ and (b) if $\ell < \infty$, then $k \in \ell \IZ$. 
\itemII When $\ell < \infty$, we have $(\phi_A)^\ell = \phi_{A^\ell} = \phi_{E_2} = \id_{T^2}$ and $\ell \in k\IZ$  
by Proposition~\ref{prop_attaching_map}\,(2). 
\eit 
\vskip 1mm 
For example, the matrix $A = 
\left(\hspace{-1mm} \bary[c]{c@{ \ \ }c}
1 & 1 \\[1mm]
0 & 1 
\eary \hspace{-1mm}\right) \in SL(2, \IZ)$ has the infinite order, since 
$A^n = 
\left(\hspace{-1mm} \bary[c]{c@{ \ \ }c}
1 & n \\[1mm]
0 & 1 
\eary \hspace{-1mm}\right)$ for $n \in \IZ$. 
\eexp 

For fiber products of fiber bundles over $S^1$ with the product structure groups, we have the following results. 
Consider a $(N, \Gamma)$ fiber bundle $\pi_\phi : M_\phi \to S^1$ and a $(L, \Lambda)$ fiber bundle $\pi_\psi : M_\psi \to S^1$. 
There exists a group monomorphism \ \ $\iota : \Gamma \times \Lambda \ni (\phi, \psi) \lmt \phi \times \psi \in {\rm Diff}^r(N \times L)$, \\ 
where $(\phi \times \psi)(x,y) = (\phi(x), \psi(y))$ $((x,y) \in N \times L)$. 
This defines the subgroup $\iota(\Gamma \times \Lambda) < {\rm Diff}^r(N \times L)$. 
Then we have the $(N \times L, \iota(\Gamma \times \Lambda))$ fiber bundle $\pi_{\phi \times \psi} : M_{\phi \times \psi} \to S^1$. 
The symbol ${\rm lcm}(m,n)$ represents the least common multiple of $m, n \in \IZ_{\geq 1}$. 

\bprop\label{prop_fiber_products} $k(\pi_{\phi \times \psi},r) \geq 1$ iff $k(\pi_{\phi},r) \geq 1$ and $k(\pi_{\psi},r) \geq 1$. 
In this case, we have \\
\hspace*{20mm} $k(\pi_{\phi \times \psi},r) = {\rm lcm}(k(\pi_{\phi},r), k(\pi_{\psi},r))$. 
\eprop 

Note that $k(\pi_{\phi \times \psi},r) = 0$ iff $k(\pi_{\phi},r) = 0$ or $k(\pi_{\psi},r) = 0$. 

\bpfb 
First we recall basic properties of compositions and Cartesian products of $C^r$ families of isotopies. 
\benum
\item ${\rm Isot}^r_\Gamma(N)$ is a group under the composition $(\phi_t)_{t \in I}(\phi_t')_{t \in I} := (\phi_t \phi_t')_{t \in I}$. 
The power is given by $\big((\phi_t)_{t \in I}\big)^n = (\phi_t^{\, n})_{t \in I}$ $(n \in \IZ)$. 
Similarly, $C^r(I, {\rm Isot}^r_\Gamma(N))$  is a group under the composition \\
$(\Phi_s)_{s \in I}(\Phi'_s)_{s \in I} := (\Phi_s\Phi'_s)_{s \in I}$.
The power is given by $\big((\Phi_{s})_{s \in I}\big)^n = (\Phi_{s}^{\ n})_{s \in I}$ $(n \in \IZ)$. 

\item Any pair of $(\phi_t)_{t \in I} \in {\rm Isot}^r_\Gamma(N)$ and $(\psi_t)_{t \in I} \in {\rm Isot}^r_\Lambda(L)$ 
determines their product \\
\hsppp $(\phi_t)_{t \in I} \times (\psi_t)_{t \in I} := (\phi_t \times \psi_t)_{t \in I} \in {\rm Isot}^r_{\iota(\Gamma \times \Lambda)}(N \times L)$. \\
Any $\chi \in {\rm Isot}^r_{\iota(\Gamma \times \Lambda)}(N \times L)$ is represented as $\chi = \Phi \times \Psi$ for 
a unique pair of $\Phi \in {\rm Isot}^r_\Gamma(N)$ and $\Psi \in {\rm Isot}^r_\Lambda(L)$. 
This yields the group isomorphism \\
\hsppp ${\rm Isot}^r_\Gamma(N) \times {\rm Isot}^r_\Lambda(L) \ni (\Phi, \Psi) \lmt \Phi \times \Psi \in {\rm Isot}^r_{\iota(\Gamma \times \Lambda)}(N \times L)$. \\
This induces the group isomorphism \\
\hsp $C^r(I, {\rm Isot}^r_\Gamma(N)) \times C^r(I, {\rm Isot}^r_\Lambda(L)) \ni (\Phi, \Psi) \lmt \Phi \times \Psi \in 
C^r(I, {\rm Isot}^r_{\iota(\Gamma \times \Lambda)}(N \times L))$. \\
Here, for $\Phi = (\Phi_{s})_{s \in I}$ and $\Psi = (\Psi_{s})_{s \in I}$ their product $\Phi \times \Psi$ is defined 
by $\Phi \times \Psi = (\Phi_{s} \times \Psi_{s})_{s \in I}$.   

\item Take any $\xi \in \Gamma$ and $\eta \in \Lambda$. 
If $\Phi \in C^r(I, {\rm Isot}^r_\Gamma(N)_{\id, \xi}; \phi)$, then $\Phi^n \in C^r(I, {\rm Isot}^r_\Gamma(N)_{\id, \xi^n}; \phi)$ $(n \in \IZ)$. 
The group isomorphisms in (2) are restricted to the bijections of the following subsets. \\ 
\hsp ${\rm Isot}^r_\Gamma(N)_{\id, \xi} \times {\rm Isot}^r_\Lambda(L)_{\id, \eta} \lra {\rm Isot}^r_{\iota(\Gamma \times \Lambda)}(N \times L)_{\id, \xi \times \eta}$, \\
\hsp $C^r(I, {\rm Isot}^r_\Gamma(N)_{\id, \xi}; \phi) \times C^r(I, {\rm Isot}^r_\Lambda(L)_{\id, \eta} ; \psi) \lra 
C^r(I, {\rm Isot}^r_{\iota(\Gamma \times \Lambda)}(N \times L)_{\id, \xi \times \eta}; \phi \times \psi)$. 
\eenum

Let $k := k(\pi_{\phi},r)$, $\ell := k(\pi_{\psi},r)$ and $m := k(\pi_{\phi \times \psi},r)$. 
Put $n = {\rm lcm}(k, \ell)$, when $k, \ell \geq 1$. 
Proposition~\ref{prop_attaching_map}\,(2) yields the following conclusions. 

For $\ell \in \IZ$ we have \ \ $\ell \in k\IZ$ \LLRA $C^r(I, {\rm Isot}^r_\Gamma(N)_{\id, \phi^\ell}; \phi) \neq \emptyset$

(4) First we suppose $k \geq 1$ and $\ell \geq 1$ and take $k', \ell' \in \IZ_{\geq 1}$ with $n = kk' = \ell \ell'$. 
Then there exist $\Phi \in C^r(I, {\rm Isot}^r_\Gamma(N)_{\id, \phi^k}; \phi)$ and 
$\Psi \in C^r(I, {\rm Isot}^r_\Lambda(L)_{\id, \psi^\ell} ; \psi)$, 
which induce \\
\hspp $\Phi^{k'} \in C^r(I, {\rm Isot}^r_\Gamma(N)_{\id, \phi^n}; \phi)$,  
$\Psi^{\ell'} \in C^r(I, {\rm Isot}^r_\Lambda(L)_{\id, \psi^n} ; \psi)$ and \\
\hspp $\Phi^{\ell'} \times \Psi^{m'} \in C^r(I, {\rm Isot}^r_{\iota(\Gamma \times \Lambda)}(N \times L)_{\id, \phi^n \times \psi^n}; \phi \times \psi)$. \\
Hence, we have $n \in m\IZ$ and $1 \leq m \leq {\rm lcm}(k, \ell)$.

(5) Conversely, suppose $m \geq 1$.  
Since $C^r(I, {\rm Isot}^r_{\iota(\Gamma \times \Lambda)}(N \times L)_{\id, \phi^m \times \psi^m}; \phi \times \psi) \neq \emptyset$, 
by (3) we see \\ 
\hspp $C^r(I, {\rm Isot}^r_\Gamma(N)_{\id, \phi^m}; \phi) \neq \emptyset$ \ \ and \ \ 
$C^r(I, {\rm Isot}^r_\Lambda(L)_{\id, \psi^m} ; \psi) \neq \emptyset$. \\
This means that $m \in k \IZ \cap \ell \IZ$. 
Hence, we have $k, \ell \geq 1$ and $m \geq {\rm lcm}(k, \ell)$. 

These observations complete the proof. 
\epf 

\subsection{Attaching maps --- the principal bundle case} \mbox{} 

In the case of principal $\Gamma$ bundles, Proposition~\ref{prop_attaching_map} can be 
described in the term of paths in $\Gamma$. 
Suppose $\Gamma$ is a Lie group (with the unit element $e$). 
Let $\Gamma_e$ denote the connected component of $\Gamma$ including $e$. 
For $a \in \Gamma$ let $\phi_a$ denote the left translation on $\Gamma$ by $a$. 
Consider the subgroup $\Gamma_L := \{ \phi_a \mid a \in \Gamma \} < {\rm Diff}^\infty(\Gamma)$. 
Then, for each $a \in \Gamma$ we have the mapping torus $\pi_{\phi_a} : M_{\phi_a} \to S^1$ associated to $\phi_a \in \Gamma_L$, 
which is a principal $\Gamma$ bundle (i.e, a $(\Gamma, \Gamma_L)$-bundle). 

Let $r \in \IZ_{\geq 0} \cup \{ \infty \}$. Below we are concerned with the following sets : \\
\hsp ${\cal P}^r(\Gamma)$ : the set of $C^r$ paths $\gamma : I \to \Gamma$, \hsh 
${\cal P}^r(\Gamma)_{a,b} = \{ \gamma \in {\cal P}^r(\Gamma) \mid \gamma(0) = a, \gamma(1) = b \}$ \ \ $(a, b \in \Gamma)$ \\[0mm] 
\hsp 
\btab[t]{@{}l}
$C^r(I, {\cal P}^r(\Gamma))$ : the set of $C^r$ families of paths, $\eta = (\eta_s)_{s \in I}$ \ ($\eta_s \in {\cal P}^r(\Gamma)$) \\[2mm] 
$C^r(I, {\cal P}^r(\Gamma); a,b) = \{ \eta \in C^r(I, {\cal P}^r(\Gamma)) \mid b\eta_1 = \eta_0 a \}$ \hsh $(a,b \in \Gamma)$ \\[2mm] 
$C^r(I, {\rm Isot}^r_{\Gamma_L}(\Gamma))$ : the set of $C^r$ families of isotopies, $H = (H_s)_{s \in I}$ \ 
($H_s \in {\rm Isot}^r_{\Gamma_L}(\Gamma)$) \\[2mm] 
$C^r(I, {\rm Isot}^r_{\Gamma_L}(\Gamma); \psi, \chi) = \{ H \in C^r(I, {\rm Isot}^r_{\Gamma_L}(\Gamma)) \mid \chi H_1 = H_0 \psi \}$ 
\hsh $(\psi, \chi \in \Gamma_L)$ 
\etab \\[2mm] 
When $a = b$ or $\psi = \chi$, we omit the symbol $b$ or $\chi$ from the notations $C^r(I, {\cal P}^r(\Gamma); a,b)$ or 
$C^r(I, {\rm Isot}^r_{\Gamma_L}(\Gamma); \psi, \chi)$. 
The meaning of similar notations are understood without any ambiguity. 

Fact~\ref{fact_mapping-torus-2}\,(2) implies the following conclusion for conjugate elements in $\Gamma$. 

\bfact\label{fact_attaching_map_equiv-0} \mbox{}
If $a, b, c \in \Gamma$ and $b = c\hspace*{0.3mm}ac^{-1}$, then we have the map \\
\hspppp $\widehat{\phi_c} \in {\rm Diff}^r(\pi_{\phi_a}, \pi_{\phi_b}; \id_{S^1})$ : $\widehat{\phi_c}([x,s]) = [cx,s]$. 
\efact

Equivariant isotopies on $\Gamma$ are reduced to paths in $\Gamma${.}

\bfact\label{fact_attaching_map_equiv-1} \mbox{}
\benum
\item There exists a bijection \ \ 
$\theta : {\cal P}^r(\Gamma) \ni \gamma \lmt \phi_\gamma := (\phi_{\gamma(t)})_{t \in I} \in {\rm Isot}^r_{\Gamma_L}(\Gamma) (= C^r(I, \Gamma_L))$. 
\bit 
\itemI Its inverse is defined by \ \ $\psi \lmt \gamma$ : $\gamma(t) = \psi_t(e)$ $(t \in I)$. 
\itemII It is restricted to the bijection \ \ 
$\theta : {\cal P}^r(\Gamma)_{a, b} \lra {\rm Isot}^r_{\Gamma_L}(\Gamma)_{\phi_a, \phi_b}$ \ \ for $a,b \in \Gamma$. 

\itemiii 
If ${\cal P}^r(\Gamma)_{a, b} \neq \emptyset$, then ${\rm Isot}^r_{\Gamma_L}(\Gamma)_{\phi_a, \phi_b} \neq \emptyset$. 
Hence, by Fact~\ref{fact_mapping-torus-2}\,(1) ${\rm Diff}^r(\pi_{\phi_a}, \pi_{\phi_b}; \id_{S^1}) \neq \emptyset$. 
In particular, if $a \in \Gamma_e$, then $\pi_{\phi_a}$ is $C^r$ isomorphic to 
the product bundle $\pi_{\id_\Gamma} : M_{\id_\Gamma} \to S^1$ as $(\Gamma, \Gamma_L)$-bundles. 
\eit  

\item There exists a bijection \ \ 
$\Theta : C^r(I, {\cal P}^r(\Gamma)) \ni \eta = (\eta_s)_{s \in I} \lmt \Theta_\eta := (\phi_{\eta_s})_{s \in I} \in C^r(I, {\rm Isot}^r_{\Gamma_L}(\Gamma))$. \bit 
\itemI Its inverse is given by \ \ $H = (H_s)_{s \in I} \lmt \eta = (\eta_s)_{s \in I}$ : $\eta_s(t) = (H_s)_t(e)${.} 
\itemII It is restricted to the bijection \ \ $\Theta : C^r(I, {\cal P}^r(\Gamma); a,b) \lra C^r(I, {\rm Isot}^r_{\Gamma_L}(\Gamma); \phi_a, \phi_b)$ \ \ for $a,b \in \Gamma$.   
\eit 
\eenum
\efact 

Below we fix $a \in \Gamma$ and consider the mapping torus $\pi_{\phi_a} : M_{\phi_a} \to S^1$ associated to $\phi_a \in \Gamma_L$, 
which is a principal $\Gamma$ bundle (i.e, a $(\Gamma, \Gamma_L)$-bundle). 
Let $r \in \IZ_{\geq 0} \cup \{ \infty \}$ and $k := k(\pi_{\phi_a},r)$.

\bfact\label{fact_attaching_map_equiv-2} For each $\ell \in \IZ$, the bijections $\theta$ and 
$\Theta$ in Fact~\ref{fact_attaching_map_equiv-1} are restricted to the bijections : 
\benum
\item $\theta : {\cal P}^r(\Gamma)_{e, a^\ell} \lra {\rm Isot}^r_{\Gamma_L}(\Gamma)_{\id, \phi_{a^\ell}}$. 
\item $\Theta : C^r(I, {\cal P}^r(\Gamma)_{e, a^\ell}; a) \lra C^r(I, {\rm Isot}^r_{\Gamma_L}(\Gamma)_{\id, \phi_{a^\ell}}; \phi_a)$. 
\eenum
\efact

\bprop\label{prop_attaching_map_equiv} {\rm (the case of principal $\Gamma$ bundles)} \\[1mm] 
\hsh For $\ell \in \IZ$ \hsh 
$\ell \in k\IZ$ \LLRA 
\btab[t]{@{}l}
There is a path $\gamma \in {\cal P}^r(\Gamma)_{e,a^\ell}$ which admits \\[2mm] 
\hsp a $C^r$ path-homotopy $\eta : \gamma \simeq_\ast a^{-1} \gamma a$ in $\Gamma$ relative to ends. 
\etab 
\vskip 2mm 
\bit 
\itemI ${\cal P}^r(\Gamma)_{e,a^k} \neq \emptyset$ \ $($i.e., $a^k \in \Gamma_e$$)$.
\itemII $\ell|k$ \ if $\ell \in \IZ_{\geq 1}$ satisfies $a^\ell \in \Gamma_e$ and $a^i \notin \Gamma_e$ $(i = 1, 2,\cdots, \ell-1)$. 

\itemiii $k \geq 1$ \LLRA 
\btab[t]{@{}l}
There is $\ell \in \IZ_{\geq 1}$ for which there is a path $\gamma \in {\cal P}^r(\Gamma)_{e,a^\ell}$ \\[2mm] 
\hsp which admits a $C^r$ path-homotopy $\gamma \simeq_\ast a^{-1} \gamma a$ in $\Gamma$ relative to ends. 
\etab 
\eit 
\eprop

\bpfb 
It follows that \ \ 
$\ell \in k\IZ$ 
\btab[t]{@{\ }ll} 
\LLRA $C^r(I, {\rm Isot}^r_{\Gamma_L}(\Gamma)_{\id, \phi_{a^\ell}}; \phi_a) \neq \emptyset$ \ 
& by Proposition~\ref{prop_attaching_map}\,(2) \\[2mm] 
\LLRA $C^r(I, {\cal P}^r(\Gamma)_{e, a^\ell}; a) \neq \emptyset$ 
& by the bijection $\Theta$ in Fact~\ref{fact_attaching_map_equiv-2}. 
\etab 
\vskip 2mm 
For each $\eta \in C^r(I, {\cal P}^r(\Gamma)_{e, a^\ell}; a)$ we have 
$\gamma := \eta_0 \in {\cal P}^r(\Gamma)_{e, a^\ell}$ and $\eta : \gamma \simeq_\ast a^{-1}\gamma a$. 
\epf 

%

\bexp\label{exp_equiv-1} \mbox{} The integer $k \equiv k(\pi_{\phi_a}, r)$ has the following properties. 
\benum 
\item 
\bit 
\itemI If $b \in \Gamma$ is conjugate to $a$ (i.e., $b = c^{-1}ac$ for some $c \in \Gamma$), then $k = k(\pi_{\phi_b},r)$. 
\itemII If $b \in \Gamma$ and ${\cal P}^r(\Gamma)_{a, b} \neq \emptyset$ (or $b \in a\Gamma_e$), then 
$k = k(\pi_{\phi_b}, r)$. 
\itemiii $k = 1$ iff $a \in \Gamma_e$. In particular, if $\Gamma$ is connected, then $k=1$.
\eit 

\item Suppose $\ell \in \IZ_{\geq 1}$ and $a^\ell \in \Gamma_e$. 
Then $\ell \in k\IZ$ and $k \geq 1$, if one of the following conditions holds. 
\bit 
\itemI $\Gamma_e$ is simply  connected{.} 
\itemII {There exists $\gamma \in {\cal P}^r(\Gamma)_{e,a^\ell}$ with $\gamma(t) \in Z(a)$ $(t \in I)$, 
where $Z(a)$ denotes the centralizer of $a$ in $\Gamma$. 
For example, this holds if (a) $a^\ell = e$ or (b) $\Gamma_e \cap Z(a)$ is $C^r$ path connected.} 
\eit 

\item If $a^\ell \not\in \Gamma_e$ for any $\ell \in \IZ_{\geq 1}$, then $k = 0$. 
\eenum 
\eexp 

\bpfb
\benum
\item The assertion (i) follows from Fact~\ref{fact_attaching_map_equiv-0}, since $\phi_b \phi_c = \phi_c \phi_a$. \\
From Fact~\ref{fact_attaching_map_equiv-1}\,(1)(iii) it follows that 
${\rm Diff}^r(\pi_{\phi_a}, \pi_{\phi_b}; \id_{S^1}) \neq \emptyset$ in the case (ii) and 
$\pi_{\phi_a}$ is a trivial $C^\infty$ $(\Gamma, \Gamma_L)$-bundle in the case (iii). 
\item The assertion follows from Proposition~\ref{prop_attaching_map_equiv} and the following observations in each case (i) and (ii). 
\bit 
\itemI Take any path $\gamma \in {\cal P}^r(\Gamma_e)_{e, a^\ell}$. 
Since $a^{-1}\gamma a \in {\cal P}^r(\Gamma_e)_{e, a^\ell}$ and $\Gamma_e$ is simply  connected, we have a $C^r$ homotopy $\gamma \simeq_\ast a^{-1}\gamma a$ in $\Gamma_e$. 
 
\itemII The assumption implies $a^{-1}\gamma a = \gamma$.  
If $a^\ell = e$, then we can take $\gamma = \e_e$ $($the constant path at $e$$)$.
\eit 

\item Note that $a^k \in \Gamma_e$ by Proposition~\ref{prop_attaching_map_equiv}\,(i). 
\eenum 
\vskip -7mm 
\epf

\bexp\label{exp_equiv-2} 
Suppose $f : \Gamma \to \Lambda$ is a Lie group homomorphism between Lie groups. 
To any $a \in \Gamma$ and its image $b \equiv f(a) \in \Lambda$ we have $k := k(\pi_{\phi_a}, r)$ and $\ell := k(\pi_{\phi_b}, r)$. 
Proposition~\ref{prop_attaching_map_equiv} yields the following conclusions. 
\benum 
\item $k \in \ell\IZ$
\item $k = \ell$ if $f$ is surjective and ${\rm Ker}\,f$ is connected and simply connected. 
\eenum 
\eexp 

\bpfb
\benum 
\item There is a path $\gamma \in {\cal P}^r(\Gamma)_{e,a^k}$ and 
a $C^r$ homotopy $\eta : \gamma \simeq_\ast a^{-1} \gamma a$ in $\Gamma$, 
which induces the path $f\gamma \in {\cal P}^r(\Lambda)_{e,b^k}$ and 
the $C^r$ homotopy $f\eta : f\gamma \simeq_\ast b^{-1} (f\gamma) b$ in $\Lambda$. 

\item There is a path $\delta \in {\cal P}^r(\Lambda)_{e,b^\ell}$ and 
a $C^r$ homotopy $\xi : \delta \simeq_\ast b^{-1} \delta b$ in $\Lambda$.
Since $f$ is {a principal} $({\rm Ker}\,f)$\,-\,bundle, by the relative homotopy lifting property of $f$ we can find 
a path $\beta \in {\cal P}^r(\Gamma)$ and 
a $C^r$ path homotopy $\rho$ in $\Gamma$ such that 
\bit 
\item[] \ \ (i) \ $f \beta = \delta$, $\beta(1) = a^\ell$ \ \ and \ \ 
(ii) \,$f\rho = \xi$, $\rho_0 = \beta$, $\rho_1 = a^{-1}\beta a$ \ and \ $\rho(1, \ast) \equiv a^\ell$. 
\eit 
Since $\xi(0, \ast) \equiv e$ and $f \rho = \xi$, we have the path $c := \rho(0,\ast) \in {\cal P}^r({\rm Ker}\,f)$. 
Note that $c(1) = a^{-1}c(0)a$. 
Since ${\rm Ker}\,f$ is connected, there is a path $\alpha \in {\cal P}^r({\rm Ker}\,f)_{e, c(0)}$.
It yields two paths $\alpha \ast c, a^{-1} \alpha a \in {\cal P}^r({\rm Ker}\,f)_{e, c(1)}$. 
Since ${\rm Ker}\,f$ is simply connected, there is a $C^r$ homotopy $\zeta : \alpha \simeq a^{-1} \alpha a$ in ${\rm Ker}\,f$ such that 
$\zeta(0, \ast) \equiv e$ and $\zeta(1, \ast) = c$. 
Then, we obtain the path $\gamma = \alpha \ast \beta \in {\cal P}^r(\Gamma)_{e, a^\ell}$ and 
the $C^r$ homotopy $\eta = (\zeta_t \ast \rho_t)_{t \in I} : \gamma \simeq_\ast a^{-1}\gamma a$ in $\Gamma$. 
This implies $\ell \in k\IZ$ and so $k = \ell$ by (1). 
\eenum 
\vskip -3mm 
\epf 

\bexp\label{exp_equiv-3} Consider the product $\Gamma \times \Lambda$ of Lie groups $\Gamma$ and $\Lambda$. 
Note that $\iota(\Gamma_L \times \Lambda_L) = (\Gamma \times \Lambda)_L$, 
since $\phi_{(a,b)} = \phi_a \times \phi_b$ for $(a,b) \in \Gamma \times \Lambda$.  
From Proposition~\ref{prop_fiber_products} it follows that 
\benum  
\item $k(\pi_{\phi_a \times \phi_b},r) \geq 1$ iff $k(\pi_{\phi_a},r) \geq 1$ and $k(\pi_{\phi_b},r) \geq 1$. \\
In this case, \ $k(\pi_{\phi_a \times \phi_b},r) = {\rm lcm}(k(\pi_{\phi_a},r), k(\pi_{\phi_b},r))$. 
\item $k(\pi_{\phi_a \times \phi_b},r) = 0$ iff $k(\pi_{\phi_a},r) = 0$ or $k(\pi_{\phi_b},r) = 0$. 
\eenum 
\eexp

\bexp\label{exp_equiv-4} \mbox{} 
\benum 
\item $k = 1$ for any connected Lie groups \\
(for examples, $GL(n, \IC)$, $SL(n, \IC)$, $SL(n, \IR)$, $U(n)$, $SU(n)$, $SO(n)$, $\IR^n$, $T^n = \IR^n/\IZ^n$, etc.). 
\item $k = 2$ for $\Gamma = GL(n, \IR)$, $O(n)$ and $a \in \Gamma^- := \{ c \in \Gamma \mid \det c < 0 \}$. \\
Note that there exists $c \in \Gamma^-$ with $c^2 = e$ and that 
$\Gamma^- = c \Gamma_e$ and 
$\Gamma = \Gamma_e \cup c \Gamma_e$. 

\item If $\Gamma$ is commutative and ${\rm order}\,(a\Gamma_e, \Gamma/\Gamma_e) < \infty$, 
then $k =  {\rm order}\,(a\Gamma_e, \Gamma/\Gamma_e)$. 
\item If $\Gamma$ is a finite group, then $k = {\rm order}\,(a, \Gamma)$.  
\item $k = 0$ for $\Gamma = \IZ$ and $a \in \IZ - \{ 0 \}$. 
\eenum
\eexp

\subsection{Principal bundle $P : {\rm Diff}_\pi^r(M)_0 \to {\rm Diff}^r(S^1)_0$} \mbox{} 

This subsection includes some comments 
when 
the diffeomorphism groups are endowed with the Whitney $C^r$ topology.   
Some topological arguments on a principal bundle $P : {\rm Diff}_\pi^r(M)_0 \to {\rm Diff}^r(S^1)_0$ provide us with 
some important  insights to statements 
described for the discrete groups of diffeomorphisms or isotopies in the previous subsections. 

Suppose $\pi : M \to B$ is a fiber bundle with fiber $N$ and structure group $\Gamma < {\rm Diff}(N)$. Let $r \in \IZ_{\geq 0} \cup \{ \infty \}$.
Below we assume that $M$ is a closed manifold and we endow 
any subgroups of ${\rm Diff}^r(M)$ and ${\rm Diff}^r(B)$ with the subspace topology of the Whitney $C^r$ topology. 

\bfact\label{fact_p-bdle} \mbox{}
\benum
\item ${\rm Diff}_\pi^r(M)_0$ and ${\rm Diff}^r(B)_0$ are topological groups 
and the map $P : {\rm Diff}_\pi^r(M)_0 \to {\rm Diff}^r(B)_0$ is a continuous surjective group homomorphism. 

\item The map $P$ is a topological principal bundle with structure group ${\rm Ker}\,P$. 
\eenum
\efact

\bpf (2) 
We fix the following data for $\pi$ : \ (a) a finite $\Gamma$-atlas $\{ (U_i, \phi_i) \}_{i=1, \cdots, \ell}$ of $\pi$, \\  
\hspace*{20mm} (b) $K_i, L_i \in {\cal K}(B)$ $(i=1, \cdots, \ell)$ with 
$\cup_{i=1}^\ell K_i = B$, $K_i \Subset L_i \subset U_i$. 

By (1) it suffices to show that the map $P$ admits a continuous local section on a neighborhood of $\id_B$ in ${\rm Diff}^r(B)_0$. 
We can repeat local deformation of $C^r$ diffeomorphisms 
to find a small $C^s$ neighborhood ${\cal U}$ of $\id_B$ ($s = 0$ for $r = 0$ and $s = 1$ for $r \geq 1$) such that 
for any $f \in {\cal U}$ admits a canonical factorization $f = f_1 \cdots f_\ell$ 
such that 
$f_i \in {\rm Diff}^r(B; B_{L_i})_0$ $(i = 1, \cdots, \ell)$ and $f_i$ varies continuously with $f$ in the $C^r$ topology. 
Then, each trivialization $\phi_i$ of $\pi$ over $U_i$ yields a canonical lift $\widetilde{f}_i \in {\rm Diff}^r_{\pi}(M; B_{L_i})_0$ of $f_i$ and 
we obtain a canonical lift $s(f) := \widetilde{f}_1 \cdots \widetilde{f}_\ell \in {\rm Diff}_\pi^r(M)_0$ of $f$. 
From these construction the map $s : {\cal U} \to {\rm Diff}_\pi^r(M)_0$ forms a continuous local section of $P$. 
\epf 

We are concerned with the following condition. 

\begin{assumption_sharp}\label{assump_P_pbdle} 
$({\rm Ker}\,P)_0$ is an open subgroup of ${\rm Ker}\,P$ (under the $C^r$ topology). 
\end{assumption_sharp}

{For example, this assumption holds 
when $\pi$ is a principal bundle (cf. \cite{Cu})
or a locally trivial bundle ($\Gamma = {\rm Diff}(N)$). 
In fact, using a finite $\Gamma$-atlas of $\pi$, 
it is seen that if $f \in {\rm Ker}\,P$ is sufficiently $C^s$ close to $\id_M$ ($s = 0$ for $r = 0$ and $s = 1$ for $r \geq 1$), then 
there exists $F \in {\rm Isot}^r_\pi(M)_0$ with $\underline{F} = \id$ and $F_1 = f$. 
Under the Assumption $(\sharp)$ $({\rm Ker}\,P)_0$ coincides with the identity connected component of ${\rm Ker}\,P$. 

From now on, we restrict ourselves to the case that $B = S^1$ and let $k \equiv k(\pi, r)$. Suppose Assumption $(\sharp)$ holds. 
Then, taking the quotients by $({\rm Ker}\,P)_0$, we obtain 
a discrete group ${\cal G} := ({\rm Ker}\,P)\big/({\rm Ker}\,P)_0$ and 
a principal ${\cal G}$ bundle $\widetilde{P} : {\rm Diff}_\pi^r(M)_0/({\rm Ker}\,P)_0 \to {\rm Diff}^r(S^1)_0$. 
It follows that $\widetilde{P}$ is a connected regular covering projection and 
${\cal G}$ coincides with the covering transformation group of $\widetilde{P}$.  
Therefore, the connecting homomorphism $\delta$ in the homotopy exact sequence induces 
an isomorphism 
$\widetilde{\delta} : \big(\pi_1{\rm Diff}^r(S^1)_0\big) \big/{\rm Im}\,\widetilde{P}_\ast \cong {\cal G}$, where 
$\widetilde{P}_\ast : \pi_1 \big({\rm Diff}_\pi^r(M)_0/({\rm Ker}\,P)_0\big) \to \pi_1 {\rm Diff}^r(S^1)_0$. 

The groups of isotopies and the fundamental groups 
of diffeomorphism groups 
are identified as follows: \\
\hspp 
$\pi_1 {\rm Diff}^r(S^1)_0 = {\rm Isot}^r(S^1)_{\id, \id}\big/\simeq_\ast$, \ \ 
$\pi_1 {\rm Diff}^r_\pi(M)_0 = {\rm Isot}^r_\pi(M)_{\id, \id}\big/\simeq_\ast$ \ \ and \\
\hsppp $\pi_1 \big({\rm Diff}_\pi^r(M)_0/({\rm Ker}\,P)_0\big)
= C^0(I, \{ 0,1 \}; {\rm Diff}_\pi^r(M)_0/({\rm Ker}\,P)_0, [\id])\big/\simeq_\ast$ \ \ etc.

Since $({\rm Ker}\,P)_0$ is path-connected, 
the principal $({\rm Ker}\,P)_0$ bundle ${\rm Diff}_\pi^r(M)_0 \to  {\rm Diff}_\pi^r(M)_0/({\rm Ker}\,P)_0$ induces 
a surjective group homomorphism \ \ $\pi_1 {\rm Diff}_\pi^r(M)_0 \to \pi_1 \big({\rm Diff}_\pi^r(M)_0\big/({\rm Ker}\,P)_0\big)$. 
Hence, we have ${\rm Im}\,\widetilde{P}_\ast = {\rm Im}\,P_\ast$. 

Since $F \simeq_\ast G$ in ${\rm Isot}^r_\pi(M)$ implies $\underline{F} \simeq_\ast \underline{G}$ in ${\rm Isot}^r(S^1)$, 
the map $P_I : {\rm Isot}^r_\pi(M)_{\id, \id} \to {\rm Isot}^r(S^1)_{\id, \id}$
induces 
the associated map \hsh $\widetilde{P_I} : \big({\rm Isot}^r_\pi(M)_{\id, \id}\big/\!\simeq_\ast\!\big) \to \big({\rm Isot}^r(S^1)_{\id, \id}\big/\!\simeq_\ast\!\big)$, \\
which corresponds to the induced map \ \ $P_\ast : \pi_1 {\rm Diff}_\pi^r(M)_0 \to \pi_1 {\rm Diff}^r(S^1)_0$. \ \ 
Hence, it follows that \\ 
\hspp ${\rm Im}\,P_\ast = {\rm Im}\,\widetilde{P_I}$ \ \ and \ \  
$\big(\pi_1 {\rm Diff}^r(S^1)_0\big)\big/ {\rm Im}\,P_\ast \ = \ \big({\rm Isot}^r(S^1)_{\id, \id}\big/\simeq_\ast\!\big)\big/{\rm Im}\,\widetilde{P_I}$. 

Note that any $z \in \pi_1{\rm Diff}^r(S^1)_0 = 
{\rm Isot}^r(S^1)_{\id, \id}\big/\!\!\simeq_\ast$
is represented as $z = [\underline{F}]$ for some 
$F \in  {\cal I} \subset {\rm Isot}^r_\pi(M)_0$. 
Then the isomorphism $\widetilde{\delta} : \big(\pi_1{\rm Diff}^r(S^1)_0\big) \big/{\rm Im}\,\widetilde{P}_\ast \cong {\cal G}$ is 
described by \\
\hspp $\widetilde{\delta}\big(\big[[\underline{F}]\big]\big) = [F_1]$ for $F \in {\cal I} \subset {\rm Isot}^r_\pi(M)_0$.

By Fact~\ref{fact_mu=0}\,(2) 
the map $\mu$ induces a group isomorphism $\widetilde{\mu} : \big({\rm Isot}^r(S^1)_{\id,\id}\big/\!\simeq_\ast\!\big) \cong \IZ$. 
From the definition of $\mu$ and $\nu$ it follows that 
$\widetilde{\mu}({\rm Im}\,\widetilde{P_I}) = k\IZ$. 
Hence, we have a group isomorphism \\[0.5mm]
\hspp $\bary[b]{@{}c@{}}
\mbox{\footnotesize $\approx$} \\[-2.5mm]
\mu
\eary : \fbox{\makebox(7,7){$\ast$}} \ \cong \ \IZ_k$, \hsh 
where \ \ {\small $\fbox{\makebox(7,7){$\ast$}} = \big(\pi_1 {\rm Diff}^r(S^1)_0\big)\big/{\rm Im}\,P_\ast \ 
= \ \big({\rm Isot}^r(S^1)_{\id, \id}\big/\simeq_\ast\!\big)\big/{\rm Im}\,\widetilde{P_I}$.}
\vskip 4mm 


\hsh 
{\small $\bary[t]{clcl}
\pi_1 {\rm Diff}_\pi^r(M)_0 & = & {\rm Isot}^r_\pi(M)_{\id, \id}\big/\simeq_\ast & \\
\raisebox{2mm}{\rotatebox{-90}{\makebox(22,0){\rightarrowfill}}} 
& \hspace*{-25mm} \smash{\raisebox{-3mm}{$P_\ast$}}
& \hspace*{-10mm} \raisebox{2mm}{\rotatebox{-90}{\makebox(22,0){\rightarrowfill}}} 
& \hspace*{-27mm} \smash{\raisebox{-3.5mm}{$\widetilde{P_I}$}} \\[6mm] 
\pi_1 {\rm Diff}^r(S^1)_0 & = & {\rm Isot}^r(S^1)_{\id, \id}\big/\simeq_\ast & \\[0mm]
\raisebox{2mm}{\rotatebox{-90}{\makebox(22,0){\rightarrowfill}}} & 
& \hspace*{-10mm} \raisebox{2mm}{\rotatebox{-90}{\makebox(22,0){\rightarrowfill}}} & \\[5mm] 
\big(\pi_1 {\rm Diff}^r(S^1)_0\big)\big/ {\rm Im}\,P_\ast & = & \big({\rm Isot}^r(S^1)_{\id, \id}\big/\simeq_\ast\!\big)\big/{\rm Im}\,\widetilde{P_I} & 
\eary$}
\hsh 
{
\smash{\raisebox{-15mm}{\rotatebox{0}{$\nu$}}} \hspace*{-3mm} 
\smash{\raisebox{0mm}{\rotatebox{-80}{
$\xymatrix{
\makebox(0,0){} \ar@/_3ex/[rr]^{} & \makebox(10,0){} & \makebox(0,0){} 
}$ 
}}} \hspace*{-10mm} 
{\small $\bary[t]{c@{ \ \ }c@{ \ \ }cl}
{\rm Isot}^r_\pi(M)_{\id, \id} & \raisebox{1mm}{\makebox(22,0){\rightarrowfill}} & {\rm Isot}^r_\pi(M)_{\id, \id}\big/\simeq_\ast & \\
& \hspace*{-17.5mm} \smash{\raisebox{-3mm}{$\widetilde{P_I}$}}
& \hspace*{-10mm} \raisebox{2mm}{\rotatebox{-90}{\makebox(22,0){\rightarrowfill}}} 
& \hspace*{-27mm} \smash{\raisebox{-3mm}{$\widetilde{P_I}$}} \\[6mm] 
{\rm Im}\,\widetilde{P_I} & \smash{\raisebox{0mm}{\rotatebox{90}{$\bigcap$}}} & {\rm Isot}^r(S^1)_{\id, \id}\big/\simeq_\ast 
& \hspace*{-48mm} \smash{\raisebox{13.5mm}{\rotatebox{-150}{\makebox(52,0){\rightarrowfill}}}} \\[0mm]
\raisebox{2mm}{\rotatebox{-90}{\makebox(22,0){\rightarrowfill}}} 
& \hspace*{-31mm} \smash{\raisebox{-3mm}{$\widetilde{\mu}$}} \hspace*{4mm} \smash{\raisebox{-3mm}{\footnotesize $\cong$}}
& \hspace*{-10mm} \raisebox{2mm}{\rotatebox{-90}{\makebox(22,0){\rightarrowfill}}} 
& \hspace*{-26mm} \smash{\raisebox{-3mm}{$\widetilde{\mu}$}} \hspace*{4mm} \smash{\raisebox{-3mm}{\footnotesize $\cong$}}\\[5mm] 
k\IZ & \smash{\raisebox{0mm}{\rotatebox{90}{$\bigcap$}}} & \hspace*{-10mm} \IZ & 
\eary$}
}
\vskip 4mm 
Combining the isomorphisms $\widetilde{\delta}$ and 
$\bary[b]{@{}c@{}}
\mbox{\footnotesize $\approx$} \\[-2.5mm]
\mu
\eary$, we obtain the following diagram, from which it is seen that 
the isomorphism 
$(\widehat{\nu}|_{{\rm Ker}\,P})^\sim : {\cal G} 
\cong \IZ_k$ in Fact~\ref{lem_Ker_P}\,(1)(ii)  
coincides with 
the composition 
$\bary[b]{@{ \ }c@{\,}}
\mbox{\footnotesize $\approx$} \\[-2.5mm]
\mu
\eary \big(\hspace*{0.2mm}\widetilde{\delta}\hspace*{0.3mm}\big)^{-1}$. 
\vskip 2mm 
\hspp 
$\bary[t]{ccll}
{\rm Ker}\,P & \makebox(30,0){\rightarrowfill} & {\cal G} \hspace*{-16mm} \smash{\raisebox{-18mm}{$\widehat{\nu}$}} & \\[0.5mm]
\rotatebox{90}{$\lra\hspace{-5mm} \lra$} & \hspace*{-35mm} \smash{\raisebox{2mm}{$R$}} 
& \hspace{2mm} \smash{\raisebox{-0.5mm}{\rotatebox{90}{\makebox(22,0){\rightarrowfill}}}} 
\hspace*{-4mm} \smash{\raisebox{2mm}{\footnotesize $\cong$}} \hspace*{3mm} \smash{\raisebox{2mm}{\footnotesize $\widetilde{\delta}$}} & \\[-0.5mm] 
{\cal I} & & \hspace*{-0.5mm} \fbox{\makebox(5,5){$\ast$}} & \\[0mm]
\rotatebox{90}{$\lla\hspace{-5mm} \lla$} & \hspace*{-35mm} \smash{\raisebox{2mm}{$\nu$}} 
& \hspace{1.5mm} \smash{\raisebox{7mm}{\rotatebox{-90}{\makebox(22,0){\rightarrowfill}}}} 
\hspace*{-4mm} \smash{\raisebox{2mm}{\footnotesize $\cong$}}
\hspace*{1mm} 
\smash{\raisebox{4mm}{
\bary[t]{c}
\mbox{\footnotesize $\approx$} \\[-2.5mm]
\mu
\eary}} & \\
\IZ & \hspace*{-5mm} \smash{\raisebox{1mm}{\makebox(40,0){\rightarrowfill}}} 
& \IZ_k \hspace{-25mm} \smash{\raisebox{25mm}{\rotatebox{-50}{\makebox(80,0){\rightarrowfill}}}} & 
\eary$


\begin{thebibliography}{99}
\baselineskip 6mm 

\bibitem{AF}
K.~Abe and K.~Fukui,
{\it On commutators of equivariant diffeomorphisms}, Proc. Japan Acad., {\bf 54} (1978), 52-54. 
        
%


\bibitem{BIP}
D.~Burago, S.~Ivanov and L.~Polterovich, 
{\it Conjugation-invariant norms on groups of geomatric origin}, 
Advanced Studies in Pure Math. 52, (2008), Groups of diffeomorphisms, 221-250.

\bibitem{Cu}
W.~D.~Curtis, The automorphism group of a compact groups action, Trans. Amer. Math. Soc., {\bf 203} (1975), 45-54. 



\bibitem{Fuj}
K.~Fujiwara, 
{\it Quasi-homomorphisms on mapping class groups}, Handbook of Teichm\"uller theory. Vol. II, 241-269, IRMA Lect. Math. Theor. Phys., 13, Eur. Math. Soc., Z\"urich, 2009.

\bibitem{Fu1}
K.~Fukui, 
{\it Commutator length of leaf preserving diffeomorphisms}, Publ. Res. Inst. Math. Soc.  Kyoto Univ., {\bf 48-3}, (2012), 615-622.

\bibitem{Fu2}
K.~Fukui, 
{\it On the uniform perfectness of equivariant diffeomorphism groups for principal $G$ manifolds}, 
Opuscula Math. {\bf 37,3} (2017), 381-388.


\bibitem{FY}
K.~Fukui and T.~Yagasaki, 
{\it Boundedness of bundle diffeomorphism groups}, in preparation. 

\bibitem{GG} J.~M.~Gambaudo and \'E.~Ghys, Commutators and diffeomorphisms of surfaces, Ergod. Th. \& Dynam. Sys., 
{\bf 24-5} (1980), 1591-1617




\bibitem{LMR}
J.~Lech, I.~Michalik and T.~Rybicki,
{\it On the boundedness of equivariant homeomorphism groups}, Opuscula Math., {\bf 38-3} (2018), 395-408. 





\bibitem{Ry}
T.~Rybicki,
{\it The identity component of the leaf preserving diffeomorphism group is perfect}, 
Monatsh. Math, {\bf 120} {\bf 3-4} (1995), 289-305.




\bibitem{Ts1} 
T.~Tsuboi, 
{\it On the group of foliation preserving diffeomorphisms}, 
Foliations 2005, ed. by P.Walczak et al., World scientific, Singapore, (2006), 411-430. 

\bibitem{Ts2} 
T.~Tsuboi, {\it On the uniform perfectness of diffeomorphism groups}, Advanced Studies in Pure Math. 52, (2008), Groups of diffeomorphisms, 505-524.

\bibitem{Ts3} T.~Tsuboi, {\it On the uniform simplicity of diffeomorphism groups}, In Differential Geometry, World scientific Publ., Hackensack, N.~J., (2009), 43-55.

\bibitem{Ts4} 
T.~Tsuboi, {\it On the uniform perfectness of the groups of diffeomorphism groups of even-dimensional manifolds}, 
Comm. Math. Helv., {\bf 87}(2012), 141-185.

\end{thebibliography}
\end{document}